\documentclass[12pt]{article} % for a short document

\usepackage[margin=2.5cm]{geometry}
\geometry{a4paper}
\usepackage[english]{babel}
\usepackage{hyperref}
\usepackage{amsmath}
\usepackage{amsthm}
\usepackage{amssymb}
\usepackage{ucs}
\usepackage{enumerate}
\usepackage[utf8x]{inputenc}
\usepackage[T1]{fontenc}

\DeclareMathOperator{\supp}{\mathrm{supp}}

\DeclareMathOperator{\RR}{\mathcal{R}}

\newcommand{\MAlg}{\mathrm{MAlg}}
\newcommand{\Aut}{\mathrm{Aut}}

  \newcommand{\R}{\mathbb R}
  \newcommand{\N}{\mathbb N}
  \newcommand{\Q}{\mathbb Q}
  \newcommand{\Z}{\mathbb Z}
  \newcommand{\C}{\mathbb C}
  \newcommand{\LL}{\mathrm L}

  \newcommand{\inv}{^{-1}}

  \renewcommand{\ker}{\mathrm{Ker}\,}

  \renewcommand{\leq}{\leqslant}
  \renewcommand{\geq}{\geqslant}
  
  \newcommand{\act}{\curvearrowright}
  
  \newcommand{\impl}{\Rightarrow}

  \newcommand{\into}{\hookrightarrow}
  
  \DeclareMathOperator{\normal}{\triangleleft}

\newtheorem{thmi}{Theorem}

\newtheorem{thm}{Theorem}[section]
\newtheorem{cor}[thm]{Corollary}
\newtheorem{proper}[thm]{Properties}
\newtheorem{lem}[thm]{Lemma}
\newtheorem{prop}[thm]{Proposition}

\theoremstyle{definition}
\newtheorem*{claim}{Claim}

\newtheorem{qu}[thm]{Question}

\newtheorem{df}[thm]{Definition}
\newtheorem*{rmq}{Remark}
\newtheorem{exemple}[thm]{Example}

\renewcommand{\d}{d}
\newcommand{\ph}{\varphi}
\newcommand{\eps}{\varepsilon}
\newcommand{\gtild}{\widetilde{[\mathcal R_G]}}

\title{More Polish full groups}

\author{Alessandro Carderi and François Le Maître}
\date{}
%%% BEGIN DOCUMENT
\begin{document}

\maketitle

\begin{abstract}

We associate to every action of a Polish group on a standard probability space a Polish group that we call the \textit{orbit full group}. For discrete groups, we recover the well-known full groups of pmp equivalence relations equipped with the uniform topology. However, there are many new examples, such as orbit full groups associated to measure preserving actions of locally compact groups. In fact, we show that such full groups are complete invariants of orbit equivalence. 

We give various characterizations of the existence of a dense conjugacy class for orbit full groups,
 and we show that the ergodic ones actually have a unique Polish group topology. Furthermore, we characterize ergodic full groups of countable pmp equivalence relations as those admitting non-trivial continuous character representations. 
 \end{abstract}

\tableofcontents

\section{Introduction}

One of the main problems in ergodic theory is to understand measure preserving actions of a countable group $\Gamma$ on the standard probability space $(X,\mu)$ up to conjugacy. This problem turns out to be very complicated in many different ways, even for $\Gamma=\Z$. In this case, we are trying to understand the conjugacy classes of the group of automorphisms of the standard probability space, $\Aut(X,\mu)$. 

The notion of orbit equivalence is much coarser than conjugacy. Two actions of countable groups $\Gamma$ and $\Lambda$ on the standard probability space $(X,\mu)$ are \textbf{orbit equivalent} if there exists $S \in\Aut(X,\mu)$ such that for almost every $x\in X$, we have $S(\Gamma\cdot x)=\Lambda\cdot S(x)$. In other words, the orbit equivalence only keeps track of the partition of the space into orbits induced by $\Gamma$ and $\Lambda$ and not of the actions themselves. A remarkable example is the following result of Ornstein and Weiss: any two measure-preserving ergodic actions of any two amenable groups are orbit equivalent \cite{MR551753}.

Orbit equivalence may be reformulated as follows. To any action of a countable group $\Gamma$ on the standard probability space $(X,\mu)$, we can associate the equivalence relation $\RR_\Gamma$ on $X$ defined by $(x,y)\in \RR_\Gamma$ if there exists $\gamma\in \Gamma$ such that $\gamma x=y$. Then two actions are orbit equivalent if and only if their equivalence relations are isomorphic up to measure zero. The equivalence relations arising in this way are called countable pmp (probability measure preserving) equivalence relations. They have geometric and cohomogical interpretations as well as fruitful relations with von Neumann algebras. We refer the interested reader to the survey of Gaboriau \cite{MR2827853}. 

Another way of formulating orbit equivalence is due to Dye. Suppose that $\Gamma$ acts on the standard probability space $(X,\mu)$. We define the \textbf{full group} induced by the $\Gamma$-action, to be the set of all $T\in\Aut(X,\mu)$ such that for almost every $x\in X$, we have $T(x)\in \Gamma\cdot x$. This group still encodes orbit equivalence in the following sense: two actions are orbit equivalent if and only if their full groups are conjugate in $\Aut(X,\mu)$. Moreover a theorem of Dye (see Theorem \ref{thm:dyerec}) implies that full groups are complete invariants of orbit equivalence, meaning that two actions are orbit equivalent if and only if their full groups are abstractly isomorphic.

As a consequence, one should be able to understand all the invariants of orbit equivalence in terms of full groups. This works well for ergodicity: an action is ergodic if and only if the associated full group is a simple group. 

In order to understand finer orbit equivalence invariants in terms of properties of the full group, it is better to introduce a Polish group topology on them. This topology is called the \textbf{uniform topology}, and it is induced by the \textbf{uniform metric} $d_u$ defined on $\Aut(X,\mu)$ by \[\d_u(T,S):=\mu\left(\left\lbrace x\in X:\ Tx\neq Sx\right\rbrace\right).\]
For example, Giordano and Pestov proved in \cite{MR2311665} that if $\Gamma$ acts freely on $(X,\mu)$ then it is amenable if and only if its full group is extremely amenable for the uniform topology. Another example is given by the topological rank. The topological rank of a topological group is the minimal number of elements needed to generate a dense subgroup. It was shown by the second named author that the topological rank of a full group can be expressed in terms of a fundamental invariant of orbit equivalence: the cost \cite{gentopergo}.\\

In this work we generalize the notion of full groups to actions of arbitrary Polish groups. Given a measure-preserving action of the Polish group $G$ on the standard probability space $(X,\mu)$, we define the \textbf{orbit full group} of the action exactly as before: it is the set of $T\in \Aut(X,\mu)$ such that for almost every $x\in X$, we have $T(x)\in G\cdot x$. We will denote this full group by $[\RR_G]$ to remember that it is the full group of the equivalence relation induced by the $G$-action. We should warn the reader that our definition needs a concrete action of $G$ on $X$, and not just a morphism $G\to \Aut(X,\mu)$.

As we said before, in order to understand deeper orbit full groups, we have to introduce a Polish topology on them. All the orbit full groups are closed for the uniform topology, but they are separable if and only if they arise as full groups of countable pmp equivalence relations. This does not rule out the existence of a Polish topology on them, for instance a compact group acting on itself by translation generates the transitive equivalence relation, so the associated orbit full group is $\Aut(X,\mu)$, which is a Polish group for the weak topology. 

The aim of this work is to define a Polish group topology on \textit{all} orbit full groups. We call it the topology of convergence in measure. It is not in general the restriction of a topology on $\Aut(X,\mu)$. When the action of $G$ on $X$ is free, one can define this topology as follows. We associate to an element $T\in[\RR_G]$ the function $f:X\to G$ uniquely defined by $T(x)=f(x)\cdot x$. Doing so, we embed $[\RR_G]$ in the space of measurable functions from $X$ to $G$, and our Polish topology is really the topology induced by the topology of convergence in measure.

\begin{thmi}[Thm. \ref{thm:polishfullgroups} and Thm. \ref{thm:atmostonepolishergo}]\label{thm:polishfullgroupsintro}
Let $G$ be a Polish group acting in a measure-preserving Borel manner on a standard probability space $(X,\mu)$. Then the associated orbit full group
$$[\mathcal R_G]=\{T\in\Aut(X,\mu): \forall x\in X, T(x)\in G\cdot x\}$$
is a Polish group for the topology of convergence in measure. 

Moreover, if the $G$-action is ergodic, then $[\mathcal R_G]$ has a unique Polish group topology, and if the $G$-action is free, then $G$ embeds into $[\mathcal R_G]$. 
\end{thmi}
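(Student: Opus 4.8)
The plan is to treat the free case first, where, as in the introduction, the topology of convergence in measure is literally the subspace topology inherited from the Polish group $L^0(X,G)$ of (classes of) measurable maps $X\to G$. To $T\in[\RR_G]$ I associate its cocycle $f_T\colon X\to G$, determined by $T(x)=f_T(x)\cdot x$, and I fix a right-invariant compatible metric $\rho\le 1$ on $G$ (obtained from a left-invariant one, which exists by Birkhoff--Kakutani, by inversion). Then the convergence-in-measure topology is induced by the right-invariant metric $d(T,S)=\int_X\rho(f_T(x),f_S(x))\,d\mu(x)$. The whole argument rests on one substitution lemma: if $f_n\to f$ in measure and $S_n\to S$ in $[\RR_G]$, then $f_n\circ S_n\to f\circ S$ in measure. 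I would prove it by factoring $f_n\circ S_n$ through $f\circ S_n$ — controlled because each $S_n$ is measure preserving, so the exceptional set has the same measure upstairs as downstairs — and then $f\circ S_n\to f\circ S$, controlled by Lusin's theorem (make $f$ continuous off a set of small measure and use that $S_nx\to Sx$ for a.e.\ $x$ along a subsequence, which follows from $f_{S_n}\to f_S$ in measure and Borel continuity of the action off a small set). I expect this lemma to be the main obstacle; everything else is bookkeeping around it.

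Given the lemma, the group structure falls out. Since $\rho$ is right-invariant so is $d$, whence right translations are isometries and $d(S S_n\inv,\mathrm{id})=d(S,S_n)$; in particular $S S_n\inv\to\mathrm{id}$ whenever $S_n\to S$. Left translation by a fixed $a$ is continuous directly from the substitution lemma applied to the fixed cocycle $f_a$, together with continuity of pointwise multiplication in $L^0(X,G)$. Joint continuity of multiplication then follows from $d(S_nT_n,ST)\le d(S_nT_n,ST_n)+d(ST_n,ST)$, whose first term equals $d(S_n,S)$ by right-invariance and whose second vanishes by left-continuity. Inversion is continuous because $S_n\inv=S\inv(S S_n\inv)$ and $S S_n\inv\to\mathrm{id}$. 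Thus $[\RR_G]$ is a topological group. For complete metrizability I pass to the symmetric metric $D(S,T)=d(S,T)+d(S\inv,T\inv)$, which induces the same topology and is complete: a $D$-Cauchy sequence $T_n$ gives $L^0$-limits $f=\lim f_{T_n}$ and $g=\lim f_{T_n\inv}$; the maps $x\mapsto f(x)x$ and $y\mapsto g(y)y$ are measure preserving as pointwise-a.e.\ limits of measure preserving maps, and the substitution lemma shows they are mutually inverse, so the limit lies in $[\RR_G]$. Separability is inherited from $L^0(X,G)$, so $[\RR_G]$ is Polish. In the non-free case the cocycle is no longer unique, but a measurable selection exists by the Jankov--von Neumann uniformization theorem; I would define $d$ by infimizing $\rho$ over the admissible cocycle values, check independence of the selection, and then the arguments above transpose verbatim — this is the one extra technical point.

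For uniqueness of the Polish topology when the action is ergodic, I would prove that $[\RR_G]$ has automatic continuity for the topology of convergence in measure, i.e.\ every homomorphism from it into a separable topological group is continuous. This yields uniqueness by a soft argument: if $\tau'$ is any other Polish group topology on the same abstract group, automatic continuity makes $\mathrm{id}\colon([\RR_G],\tau_{cm})\to([\RR_G],\tau')$ continuous, so $\tau'\subseteq\tau_{cm}$, and the open mapping theorem for Polish groups then forces this continuous bijective homomorphism to be a homeomorphism, giving $\tau'=\tau_{cm}$. Automatic continuity I would obtain from the Steinhaus property: there is an integer $n$ such that $W^n$ is a neighbourhood of the identity for every symmetric, countably syndetic $W$. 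Here ergodicity is indispensable and is the crux of this part: it lets me conjugate elements supported on a set of small measure to elements supported anywhere, and decompose a general element as a bounded product of such pieces, which is exactly what forces $W^n$ to swallow a neighbourhood.

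Finally, when the action is free the embedding of $G$ is immediate. Since the action is measure preserving, each $g\in G$ defines $T_g\in\Aut(X,\mu)$ by $T_g(x)=g\cdot x$, and $T_g(x)\in G\cdot x$ shows $T_g\in[\RR_G]$; the map $g\mapsto T_g$ is a homomorphism since $T_gT_h=T_{gh}$, and it is injective by freeness (if $g\cdot x=h\cdot x$ a.e.\ then $h\inv g$ fixes a.e.\ $x$, so $h\inv g=1$). The cocycle of $T_g$ is the constant map $x\mapsto g$, so under the identification above $g\mapsto T_g$ is the restriction of the isometric embedding of $G$ as the constant maps inside $L^0(X,G)$; hence it is a topological embedding with closed image, and $G$ embeds as a closed subgroup of $[\RR_G]$.
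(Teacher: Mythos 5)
There are two genuine gaps, one in each half of the argument.

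First, the substitution lemma. Your reduction of everything to ``$f_n\to f$ in measure and $S_n\to S$ implies $f_n\circ S_n\to f\circ S$ in measure'' is exactly the right pivot (it is the paper's Proposition \ref{prop:continuity} combined with the continuity of the map $\Phi(f)(x)=f(x)\cdot x$), but your proof of it is circular at the key point. You need ``$f_{S_n}(x)\to f_S(x)$ in $G$ for a.e.\ $x$ implies $S_n x=f_{S_n}(x)\cdot x\to f_S(x)\cdot x=Sx$'', and this is precisely continuity of the action in the group variable, which a Borel action need not have for the given topology on $X$. Lusin's theorem cannot supply it: there is no measure on $G$ with which to regularize the orbit maps $g\mapsto g\cdot x$, and ``Borel continuity of the action off a small set'' is not a theorem you can quote. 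What fills this hole is the Becker--Kechris theorem (Theorem \ref{thm:borelvscont}): one first replaces the Polish topology on $X$ by one making the action continuous, and only then does the pointwise-subsequence characterization of convergence in measure give the substitution lemma. Once that is in place, your right-invariant metric, the two-sided completion argument, and the identification of $G$ with the constant cocycles in the free case are all correct and essentially the paper's proof in metric clothing (the paper packages it instead as: $\gtild=\Phi\inv(\Aut(X,\mu))$ is a $G_\delta$ in $\LL^0(X,\mu,G)$, hence a Polish group for the twisted product, and $[\RR_G]\cong\gtild/\ker\Phi$ is Polish by the quotient property; your infimized metric in the non-free case is their Proposition \ref{prop:dRG}).

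Second, uniqueness. You propose to prove automatic continuity of $([\RR_G],\tau_{cm})$ via a Steinhaus-type argument. This is strictly harder than the theorem: the paper explicitly records automatic continuity of ergodic orbit full groups for the convergence-in-measure topology as an \emph{open question}. Moreover the mechanism you describe --- conjugate elements supported on small sets, write a general element as a bounded product of such --- is the Kittrell--Tsankov argument for the \emph{uniform} topology, where ``close to the identity'' means ``small support''. It does not transfer to $\tau_{cm}$: by Theorem \ref{thm:aperdense}, aperiodic (hence fully supported) elements can accumulate at the identity for $\tau_{cm}$, so a countably syndetic set $W$ gives you no control on supports. The paper's route sandwiches instead: automatic continuity of $(\mathbb G,d_u)$ (which only needs ergodicity via Proposition \ref{prop:transsamemeas} and the three-involutions decomposition) shows every Polish group topology is weaker than $d_u$; Kallman's argument shows every Polish group topology refines the weak topology; then Luzin--Suslin forces any two such topologies to have the same Borel sets, and Property \ref{proper} ($\gamma$) concludes. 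Your open-mapping endgame is fine, but the input you feed it is unavailable; replace it with this sandwich. The final paragraph on embedding $G$ for free actions is correct and agrees with the paper.
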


The topologies we get with this theorem are not always the uniform or the weak topology. Indeed, any Polish locally compact group $G$ admits a free ergodic measure-preserving action, see for example Proposition 1.2 of \cite{MR1250814}. If the group $G$ is not countable, then the topology of convergence in measure on $[\mathcal R_G]$ cannot be the uniform topology, because $G\subset\Aut(X,\mu)$ is discrete for the uniform topology. Moreover if $G$ is not compact, then we show in Corollary \ref{cor:noteqaut} that $[\RR_G]\neq \Aut(X,\mu)$, so this topology is not the weak topology either, by Corollary \ref{cor:polishforweak}. 

Dye in \cite{MR0131516} gave an abstract definition of full groups: a subgroup $\mathbb G\leq \Aut(X,\mu)$ is \textbf{full} if for every countable $\Gamma\leq \mathbb G$, the full group generated by $\Gamma$ is still a subgroup of $G$. Clearly the orbit full groups we have defined are full groups in the sense of Dye's definition. 

We do not know how exactly which full groups have a Polish group topology. Nevertheless, we show that if an ergodic full group admits a Polish topology, then such topology is unique, refines the weak topology and is weaker than the uniform topology (Theorem \ref{thm:atmostonepolishergo}). It follows that if an ergodic full group admits a Polish topology, then it is a Borel subset of $\Aut(X,\mu)$ (Corollary \ref{cor:ergopolishborel}). This allows us to give examples of full groups which cannot carry a Polish group topology (Corollary \ref{cor:nopol}). Note that such a phenomenon is actually common in the  topological setting, as was recently shown by Ibarlucias and Melleray \cite{Ibarlucia:2013kq}.\\

One of the main interests of full groups induced by actions of countable groups is that they are complete invariants of orbit equivalence. For Polish group actions, the notion of orbit equivalence is slightly more complicated. Two actions of two Polish groups $G$ and $H$ on the standard probability space $(X,\mu)$ are orbit equivalent if there exists a full measure subset $A\subseteq X$ and a measure preserving bijection $S: A\to A$ such that for all $x\in A$,
\[S(G\cdot x)\cap A=(H\cdot S(x))\cap A.\]
It is clear from this definition that if two actions are orbit equivalent, then their orbit full groups are conjugate in $\Aut(X,\mu)$. The converse is however more complicated. Suppose that two ergodic orbit full groups are isomorphic. Dye's Reconstruction Theorem (see Theorem \ref{thm:dyerec}) still holds, so the isomorphism is the conjugation by some $S\in\Aut(X,\mu)$. When both acting groups are locally compact, we show that $S$ induces an orbit equivalence. So orbit full groups are complete invariants of orbit equivalence for actions of locally compact groups (Theorem \ref{thm:reconstlc}).

The orbit full groups that we have defined arise as intermediate examples between full groups of countable pmp equivalence relations and $\Aut(X,\mu)$, so they should share the topological properties which are satisfied by both. One of the simplest of such properties is contractibility, and indeed it is not hard to see that orbit full groups are contractible using the same the same approach of Keane for $\Aut(X,\mu)$ in \cite{MR0265555} (see Corollary \ref{cor:ofgcontra}).

However $\Aut(X,\mu)$ and full groups of countable pmp equivalence relations have many opposite properties. For example, any aperiodic\footnote{$T\in\Aut(X,\mu)$ is \textbf{aperiodic} if all its orbits are infinite.} element has a dense conjugacy class in $\Aut(X,\mu)$, while in the full group of a countable pmp equivalence relation, the identity cannot be approximated by aperiodic elements. We now characterize when a group action induces an orbit full group for which the aperiodic elements have a dense conjugacy class. 

\begin{thmi}[Thm. \ref{thm:aperdense}] \label{thm:aperdenseintro}Let $\mathbb G$ be an orbit full group. Then the following are equivalent:
\begin{enumerate}[(i)]
\item the set of aperiodic elements is dense in $\mathbb G$;
\item the conjugacy class of any aperiodic element of $\mathbb G$ is dense in $\mathbb G$;
\item whenever $\Gamma\act(X,\mu)$ is a free measure-preserving action of a countable discrete group $\Gamma$;  there is a dense $G_\delta$ in $\mathbb G$ of elements inducing a free action of $\Gamma*\Z$;
\item for all neighborhood of the identity $U$ in $G$, the set of $x\in X$ such that $U\cdot x\neq \{x\}$ has full measure.
\end{enumerate}
\end{thmi}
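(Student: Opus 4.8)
The plan is to establish the cycle (ii)$\Rightarrow$(i)$\Rightarrow$(iv)$\Rightarrow$(iii)$\Rightarrow$(ii), all topological statements referring to the topology of convergence in measure on $\mathbb G=[\RR_G]$ furnished by Theorem~\ref{thm:polishfullgroupsintro}. The arrow (ii)$\Rightarrow$(i) is immediate: conjugates of an aperiodic element are aperiodic, so a dense conjugacy class of aperiodic elements is in particular a dense set of aperiodic elements. For (iii)$\Rightarrow$(ii) I would proceed in two stages. Specializing (iii) to the trivial group $\Gamma=1$, so that $\Gamma*\Z\cong\Z$, already produces a dense $G_\delta$ of elements inducing a free $\Z$-action, that is, a dense set of aperiodic elements; this gives (i). It then remains to show that any two aperiodic elements of $\mathbb G$ are approximately conjugate \emph{inside} $\mathbb G$, for then the conjugacy class of a fixed aperiodic $T_0$ is dense among the aperiodic elements, which are themselves dense, yielding (ii).

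I would prove (i)$\Rightarrow$(iv) by contraposition. If (iv) fails, there are a neighborhood $U$ of the identity in $G$ and a set $A$ of positive measure with $U\cdot x=\{x\}$ for every $x\in A$, i.e. $U\subseteq\mathrm{Stab}(x)$ on $A$. The defining feature of the topology of convergence in measure is that an element $T$ close to the identity displaces almost every point by a small group element; concretely, once $T$ is close enough to $\mathrm{id}$, the set of $x$ whose displacing element can be taken in $U$ has measure larger than $1-\mu(A)$. On the intersection of this set with $A$, which has positive measure, one has $T(x)=g\cdot x=x$ because $g\in U\subseteq\mathrm{Stab}(x)$. Hence every $T$ in a fixed neighborhood of the identity fixes a set of positive measure and is therefore periodic on part of the space, so the aperiodic elements stay uniformly away from the identity and cannot be dense, contradicting (i).

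The core of the theorem is (iv)$\Rightarrow$(iii), which I would run through the Baire category theorem. Fix a free action of $\Gamma\leq\mathbb G$. For each nontrivial reduced word $w\in\Gamma*\Z$, let $O_w$ be the set of $T\in\mathbb G$ for which the image $w(T)\in\mathbb G$ of $w$ under the homomorphism sending $\Gamma$ via the given embedding and the generator of $\Z$ to $T$ acts freely, i.e. $\mu(\mathrm{Fix}(w(T)))=0$. The elements inducing a free $\Gamma*\Z$-action are exactly those of $\bigcap_w O_w$, a countable intersection, so it suffices to show that each $O_w$ is a dense $G_\delta$. For the $G_\delta$ part I would check that $S\mapsto\mu(\mathrm{Fix}(S))$ is upper semicontinuous for convergence in measure: along an almost everywhere convergent subsequence the pointwise $\limsup$ of the fixed-point sets is contained in $\mathrm{Fix}$ of the limit, so a reverse Fatou inequality gives upper semicontinuity. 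Thus each $\{S:\mu(\mathrm{Fix}(S))<1/n\}$ is open, and $O_w$, being the preimage of their intersection under the continuous map $T\mapsto w(T)$, is $G_\delta$.

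The genuinely difficult point, and the one where hypothesis (iv) is indispensable, is the density of each $O_w$. Given $T_0\in\mathbb G$ and a prescribed neighborhood, I would perturb $T_0$ to a nearby $T$ for which $w(T)$ is fixed-point free, by composing $T_0$ — on a thin Rokhlin tower adapted to the length of $w$ — with displacements by infinitesimal elements of $G$. Condition (iv) is precisely what guarantees that arbitrarily small elements of $G$ move almost every point, and so supplies the freedom to slide the prospective fixed-point set of $w(T)$ off itself while altering $T_0$ only on a set of small measure. The main obstacle is to control all syllables of $w$ simultaneously, ensuring that the perturbation destroys fixed points without producing new ones; I expect to handle this by an induction on the syllable length of $w$, exploiting freeness of the $\Gamma$-action to keep the relevant levels of the tower in general position. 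Finally, the approximate conjugacy of aperiodic elements needed to close (iii)$\Rightarrow$(ii) is of the same flavour: using that periodic elements are dense in $\mathbb G$ and Dye's reconstruction machinery (Theorem~\ref{thm:dyerec}) to produce the matching conjugators \emph{within} $\mathbb G$, one matches Rokhlin towers of two aperiodic elements by an orbit-preserving automorphism; constructing this conjugator inside $\mathbb G$ rather than in $\Aut(X,\mu)$ is the delicate part there.
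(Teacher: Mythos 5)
Your architecture is sound and in fact overlaps heavily with the paper's: (ii)$\Rightarrow$(i) is trivial, your two-way argument between (i) and (iv) via the displacement description of the topology of convergence in measure is essentially the paper's proof that density of aperiodic elements is equivalent to its condition (vii) (your computation that a $T$ close to $\mathrm{id}$ admits displacing elements in $U$ on a large set is justified by Proposition \ref{prop:dRG}/Corollary \ref{cor:equivcvmeasure}), and your Baire-category scheme for (iv)$\Rightarrow$(iii), including the upper semicontinuity of $S\mapsto\mu(\mathrm{Fix}(S))$ giving the $G_\delta$ part, is correct and is exactly Törnquist's framework that the paper invokes. However, there are two genuine gaps.

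First, the density of each $O_w$ --- which you rightly identify as the core --- is only a sketch, and the one ingredient that makes it work is missing. What is needed (and what the paper isolates as condition (iv) of the full Theorem \ref{thm:aperdense}, the only topological input to Törnquist's Category Lemma) is: for every $A\in\MAlg(X,\mu)$ there exist elements of $\mathbb G$ with support exactly $A$ converging to the identity. Deriving this from your hypothesis (iv) is not a formality: a small $g\in U$ moving $x\in A$ need not keep $g\cdot x$ inside $A$, so one cannot directly cut such $g$'s into an element of the full group supported in $A$. The paper's proof handles this with a Lindelöf covering argument producing small elements $g_i$ and pieces $A_i$ with $g_i(A_i)$ disjoint from $A_i$, followed by a two-case pigeonhole: either some $g_i(A_i)$ returns to $A$, or two images $g_i(A_i)$, $g_j(A_j)$ overlap outside $A$ and one composes to come back. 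Your phrase ``(iv) supplies the freedom to slide the prospective fixed-point set off itself while altering $T_0$ only on a set of small measure'' asserts precisely the conclusion of this lemma without proving it, and your proposed induction on syllable length is redundant once the lemma is available (Törnquist's argument then applies verbatim) but does not substitute for it.

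Second, your closing step (iii)$\Rightarrow$(ii) rests on the approximate conjugacy, within $\mathbb G$, of any two aperiodic elements, and you propose to obtain the conjugators from Dye's reconstruction theorem. That is the wrong tool: Theorem \ref{thm:dyerec} concerns spatial realization of abstract isomorphisms between \emph{ergodic} full groups, says nothing about approximate conjugacy, and the present theorem carries no ergodicity hypothesis. The density of periodic elements is likewise irrelevant here. The correct statement is the Rokhlin-lemma fact that in any full group the conjugacy class of an aperiodic element is \emph{uniformly} dense in the set of aperiodic elements, with conjugators built inside the full group by matching Rokhlin towers (the paper quotes \cite[Cor. 5.11]{lemai2014}); since the topology of convergence in measure is weaker than the uniform topology, this yields (ii) from (i) and closes your cycle. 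As written, your proposal names the right construction (matching towers) but attributes it to a theorem that cannot deliver it and flags the delicate point without resolving it.
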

Note that condition $(iii)$ is inspired by results that Törnquist obtained for $\mathbb G=\Aut(X,\mu)$ \cite{MR2210067}. 
Using condition $(iv)$, we get a nice dichotomy for measure-preserving ergodic actions of  locally compact groups: either they generate a countable pmp equivalence relation, or all the above conditions are satisfied (see Corollary \ref{cor:dicholc}). 
\\

In the last section we classify character representations of ergodic orbit full groups. Before stating the theorem, let us give some background. 

Every unitary representation of a group $G$ splits as direct sum of cyclic representations. These representations are encoded by the positive type functions, that are the functions $f:G\to \C$ such that for all finite tuple $(g_1,...,g_n)$ of elements of $G$, the matrix $(f(g_ig_j\inv))_{i,j=1,...,n}$ is positive semi-definite.

A positive type function $\chi:G\to\C$ is a \textbf{character} if it satisfies the following conditions:\begin{itemize}\item it is conjugacy-invariant: for all $g,h\in G$, we have  $\chi(g\inv h g)=\chi(h)$  and \item it is normalized: $\chi(1_G)=1$.\end{itemize} A \textbf{character representation} is a unitary representation of $G$ which splits as a direct sum of cyclic representations whose corresponding positive definite functions are characters. Character representations are actually the representations into the unitary groups of \textit{finite von Neumann algebras}, see \cite[Sec. 2.3]{MR3019724} for more details.

Every discrete group $\Gamma$ has a faithful character representation, namely the regular representation. It is associated to the \textit{regular} character $\chi_r$ defined by $\chi_r(\gamma)=0$ if $\gamma\neq 1_\Gamma$ and $\chi_r(1_\Gamma)=1$. The set of characters of $\Gamma$ is convex and compact for the pointwise topology. Moreover, it is a \textit{Choquet simplex}, meaning that every character can be written in a unique way as an integral of extremal characters. The problem of classifying extremal characters has a long history, starting with the work of Thoma who classified extremal characters of the group of permutations of the integers with finite support \cite{MR0173169}. Since then, many examples were studied, see for instance \cite{Peterson:2013jk} and references therein. 

The set of continuous characters of a locally compact group is again a Choquet simplex, but locally compact groups do not necessarily have a faithful character representation. For example, all the continuous character representations of connected semi-simple Lie groups are trivial by a result of Segal and von Neumann \cite{MR0037309}. Recently Creutz and Peterson have shown that the same is true for non discrete totally disconnected simple locally compact groups having the Howe-Moore property \cite[Thm. 4.2]{Creutz:2013jk}.

For Polish groups, the situation is more complicated. The set of continuous characters may cease to form a Choquet simplex. For example, the abelian group of measurable maps into the circle $G=\LL^0(X,\mu,\mathbb S^1)$ has no continuous extremal characters, although it has continuous characters (see \cite[Ex. C.5.10]{MR2415834}). However, if the Polish group $G$ contains a countable dense subgroup $\Gamma$ which has only countably many extremal characters, then the continuous extremal characters of $G$ are given by the extremal characters of $\Gamma$ which extend continuously to $G$. It is then easy to see that the continuous characters of $G$ form a Choquet subsimplex of the characters of $\Gamma$. This remark has been crucial for the understanding of continuous characters of several Polish groups. In particular, it was used by Dudko to give a complete description of the characters of the full group of the hyperfinite ergodic equivalence relation \cite{MR2813475}. We extend his result and classify all the characters of an arbitrary ergodic orbit full group which are continuous for the uniform topology. 

 \begin{thmi}[Thm. \ref{thm:descrichar}]\label{thm:descricharintro}Let $\mathbb G$ be an ergodic orbit full group. Then we have the following dichotomy:
\begin{enumerate}
\item Either $\mathbb G$ is the full group of a countable pmp equivalence relation, and then all its continuous characters are (possibly infinite) convex combinations of the characters $\chi_k$ given by 
$$\chi_k(g):=\mu(\{x\in X: g\cdot x=x\})^k$$
for $k\in\N$ and the constant character $\chi_0\equiv 1$.
\item  or $\mathbb G$ does not have any nontrivial continuous character representation. 
\end{enumerate}
\end{thmi}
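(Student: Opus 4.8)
The plan is to split along the dichotomy that drives the statement: either the aperiodic elements are dense in $\mathbb G$, or $\mathbb G=[\RR]$ for a countable pmp equivalence relation. Here I would use that an ergodic orbit full group is the full group of a countable pmp equivalence relation exactly when condition (iv) of Theorem~\ref{thm:aperdenseintro} fails; so if $\mathbb G$ is not of that form, the equivalent conditions of Theorem~\ref{thm:aperdenseintro} hold and, in particular, every aperiodic element has a dense conjugacy class. The second alternative then follows at once: a continuous character $\chi$ is conjugacy-invariant, hence constant on the conjugacy class of any fixed aperiodic $T$; as that class is dense and $\chi$ is continuous, $\chi$ is constant on $\mathbb G$, and the normalization $\chi(1_{\mathbb G})=1$ forces $\chi\equiv 1$. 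Thus every continuous character is trivial and every continuous character representation is a multiple of the trivial representation, which is precisely the statement that $\mathbb G$ has no nontrivial continuous character representation.

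It remains to treat $\mathbb G=[\RR]$ with $\RR$ an ergodic countable pmp equivalence relation, and here I would bootstrap from Dudko's classification of the hyperfinite case \cite{MR2813475}. First I would verify that the $\chi_k$ are genuine continuous characters: $\chi_1(g)=\mu(\{x:g\cdot x=x\})$ is the restriction to $\mathbb G$ of the canonical trace on the von Neumann algebra $\LL(\RR)$, which is a $\mathrm{II}_1$ factor by ergodicity, so $\chi_1$ is an extremal character; the functions $\chi_k=\chi_1^k$ arise from its tensor powers, and since $g\mapsto\mu(\{x:g\cdot x=x\})$ is $1$-Lipschitz for $\d_u$, all $\chi_k$ are uniformly continuous. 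Convex combinations of characters are characters, so one inclusion is free, and the real task is to show that an arbitrary continuous character $\chi$ must be of this form. Conceptually, this amounts to proving that $\chi$ depends only on the measure of the fixed-point set of its argument and is blind to all finer cycle structure.

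For the transfer I would proceed in two moves. Fixing once and for all an ergodic hyperfinite subrelation $\RR_0\subseteq\RR$ and restricting $\chi$ to $[\RR_0]$, Dudko's theorem yields $c_k\geq 0$ with $\sum_k c_k=1$ and $\chi(g)=\sum_k c_k\,\mu(\{x:g\cdot x=x\})^k$ for all $g\in[\RR_0]$; in particular, as $\RR_0$ is ergodic it contains an involution $g_t$ of every support $2t$, all such being conjugate in $\mathbb G$, so $\chi(g_t)=\sum_k c_k(1-2t)^k$. The second move shows the same coefficients compute $\chi$ everywhere: I would use that the periodic elements are $\d_u$-dense in $[\RR]$ and that any single element $g$ lies in the full group of some ergodic hyperfinite subrelation $\RR_0'\subseteq\RR$; applying Dudko to $[\RR_0']$ gives $\chi|_{[\RR_0']}=\sum_k c_k'\,\mu(\{x:g\cdot x=x\})^k$, and comparing the involution values $\sum_k c_k'(1-2t)^k=\sum_k c_k(1-2t)^k$ together with the linear independence of the functions $t\mapsto(1-2t)^k$ forces $c_k'=c_k$. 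Since $\chi$ and $\sum_k c_k\chi_k$ are continuous and agree on the dense set of periodic elements, they coincide, and uniqueness of the expansion identifies $\{\chi_k\}_{k\geq 0}$ as exactly the extremal continuous characters.

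The main obstacle is the geometric input in the transfer step: that every automorphism in $[\RR]$ lies in the full group of an ergodic hyperfinite subrelation, and that periodic elements are $\d_u$-dense. I would establish the former by an exhaustion construction, starting from the hyperfinite relation generated by $\langle g\rangle$ and enlarging it inside $\RR$ through an increasing sequence of hyperfinite subrelations with ergodic union, using ergodicity of $\RR$ to connect positive-measure sets while invoking stability of hyperfiniteness under increasing unions. A secondary, more conceptual point, already internal to Dudko's theorem but worth isolating if one prefers a self-contained argument, is \emph{why} only integer powers survive: testing positive-definiteness of $\chi$ on the subgroups $(\Z/2\Z)^n$ generated by disjoint involutions of support $2t$ gives, already on $(\Z/2\Z)^2$, the inequality $1-2\chi(g_t)+\chi(g_{2t})\geq 0$, and the higher hypercubes force $\chi(g_t)=\sum_k c_k(1-2t)^k$ to be a probability-generating-function evaluation; the continuous powers $(1-2t)^s$ are excluded because their second difference behaves like $s(s-1)t^2<0$ near $0$ for $0<s<1$. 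I expect that bookkeeping these positive-definiteness constraints, rather than the subrelation lemma, is the delicate part for anyone wishing to avoid the reduction to Dudko altogether.
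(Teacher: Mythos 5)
Your overall architecture rests on the claim that an ergodic orbit full group fails to be the full group of a countable pmp equivalence relation exactly when condition (iv) of Theorem \ref{thm:aperdenseintro} holds, and you attribute this equivalence to Theorem \ref{thm:aperdenseintro} itself. That theorem does not say this: it only proves the equivalence of the density and genericity conditions (i)--(iii) with condition (iv). The implication you actually need --- ``condition (iv) fails $\Rightarrow$ $\mathbb G$ is the full group of a countable pmp equivalence relation'' --- is established in the paper only for \emph{locally compact} acting groups (Corollary \ref{cor:dicholc}), and its proof genuinely uses the Haar measure: after observing that the core is null one knows that $[G]_D$ is of countable type, but to conclude that the \emph{orbit} equivalence relation $\mathcal R_G$ itself is countable one invokes Proposition \ref{prop:oeincl}, whose proof is a Fubini argument over the Haar measure of $G$. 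For a general Polish group this step is simply absent from your argument. (One can likely salvage the dichotomy: if (iv) fails, ergodicity forces the core to be null, so almost every stabilizer $G_x$ contains an open subgroup and almost every orbit $G/G_x$ is countable; but this observation, together with the verification that the resulting countable-class relation is a genuine countable pmp equivalence relation, is an argument you would have to supply, not a citation.)

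The paper avoids the dichotomy altogether, and its route is worth contrasting with yours. It first proves, much as in your second half, that any character of an ergodic full group that is continuous for the \emph{uniform} topology is a convex combination $\sum_k\alpha_k\chi_k$; the transfer from Dudko's Theorem \ref{thm:charahyp} is done by conjugating the finite-order elements, which are $d_u$-dense by Rokhlin's Lemma, into a fixed copy of $[\mathcal R_0]$, rather than by your ``every element lies in an ergodic hyperfinite subrelation'' lemma --- both work, but the paper's version needs less machinery than your exhaustion construction. Then, given a nontrivial character $\chi$ continuous for the Polish topology $\tau$, it uses Theorem \ref{thm:atmostonepolishergo} to see that $\chi$ is $d_u$-continuous, writes $\chi=\sum_k\alpha_k\chi_k$, and observes that since $\chi_k(T)=(1-d_u(T,\mathrm{id}_X))^k$, nontriviality of $\chi$ forces $\tau$-convergence to the identity to imply $d_u$-convergence; hence $\tau$ equals the uniform topology and $\mathbb G$ is of countable type by Proposition \ref{prop:polishfordu}. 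In other words, the existence of a nontrivial continuous character is itself what produces alternative (1); no structural dichotomy on the acting group is needed, which is also why the paper's statement applies to arbitrary ergodic Polish full groups. Your argument for alternative (2) via dense conjugacy classes of aperiodic elements is correct where it applies, but as written it does not cover every ergodic orbit full group that fails to be of countable type.
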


%%%%%%%%%%%%%%%%%%%%%%%%%%%%%%%%%%%%%%%%%%%%%%%%%%%%%%%%%%%%%%%%%%%%%%%%%%%%%%

\section{Preliminaries}

\subsection{Polish spaces}

A \textbf{Polish space} is a topological space which is separable and admits a compatible complete metric. A countable intersection of open subsets of a topological spaces is called a $G_\delta$. 
%This notion is relevant to us because of the following proposition.  

\begin{prop}[{\cite[Thm. 3.11]{MR1321597}}]\label{prop:polishsub}
Let $(X,\tau)$ be a Polish space. Then $Y\subset X$ is Polish for the induced topology if and only if $Y$ is a $G_\delta$.
\end{prop}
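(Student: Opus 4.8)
The plan is to prove the two implications separately; this is the classical characterization of Polish subspaces (Alexandrov's theorem), so I expect no genuine surprises, only the standard descriptive-set-theoretic constructions. I will fix once and for all a complete metric $\rho$ inducing $\tau$ on $X$.

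For the easier direction ($\Leftarrow$), suppose $Y=\bigcap_{n\in\N}U_n$ with each $U_n$ open in $X$. The first point is that an open subset $U$ of a Polish space is itself Polish: separability is inherited, and a complete compatible metric on $U$ is obtained by adding to $\rho$ the term $\bigl|\,1/\rho(x,X\setminus U)-1/\rho(y,X\setminus U)\,\bigr|$, which blows up as either point approaches the boundary, so that no Cauchy sequence can escape $U$. Granting this, each $U_n$ is Polish, hence so is the countable product $\prod_n U_n$. I would then embed $Y$ into this product via the diagonal map $y\mapsto(y,y,\dots)$, which is a homeomorphism onto its image; the image is the intersection of the (closed) diagonal of $\prod_n X$ with $\prod_n U_n$, hence closed in $\prod_n U_n$. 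Since a closed subspace of a Polish space is Polish, $Y$ is Polish.

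For the harder direction ($\Rightarrow$), suppose $Y$ is Polish in the induced topology and fix a complete metric $d$ inducing the subspace topology on $Y$. I would first reduce to the case where $Y$ is dense in $X$: replacing $X$ by the closure $\overline Y$ (which is closed, hence $G_\delta$, in the metric space $X$) is harmless, because a relative $G_\delta$ inside a $G_\delta$ is again a $G_\delta$. Assuming $Y$ dense, I would define for each $n$ the set $U_n$ of points $x\in X$ admitting an open neighborhood $V$ in $X$ whose intersection $V\cap Y$ has $d$-diameter less than $1/n$. Each $U_n$ is visibly open (the same $V$ witnesses membership for all its points), and the heart of the argument is the identity $Y=\bigcap_n U_n$.

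The inclusion $Y\subseteq\bigcap_n U_n$ follows at once from the agreement of the two topologies on $Y$. The reverse inclusion is where I expect the main difficulty, and it is precisely where completeness of $d$ is used: given $x\in\bigcap_n U_n$, I would extract a decreasing sequence of neighborhoods $V_n\ni x$ for which both the $\rho$-diameter of $V_n$ and the $d$-diameter of $V_n\cap Y$ tend to $0$, then use density of $Y$ to pick $y_n\in V_n\cap Y$. The sequence $(y_n)$ is $d$-Cauchy, so it has a $d$-limit $y\in Y$; since $d$ induces the subspace topology this is also a $\rho$-limit, while $y_n\to x$ in $X$ by the control on $\rho$-diameters, whence $x=y\in Y$ by uniqueness of limits. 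This exhibits $Y$ as the $G_\delta$ set $\bigcap_n U_n$ and completes the proof.
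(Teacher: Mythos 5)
Your argument is correct and is essentially the classical proof of Alexandrov's theorem that the paper simply cites (Kechris, Thm.~3.11) without reproving: the re-metrization of an open set plus the diagonal embedding into $\prod_n U_n$ for one direction, and the sets $U_n$ of points with small-oscillation neighborhoods, together with completeness of $d$ and density of $Y$, for the other. Both implications are carried out correctly, so there is nothing to add.
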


A \textbf{standard Borel space} is an uncountable set $X$ equipped with a $\sigma$-algebra $\mathfrak B$ such that there exists a Polish topology on $X$ for which $\mathfrak B$ is the $\sigma$-algebra of Borel subsets. A fundamental fact is that all the standard Borel spaces are isomorphic \cite[Thm. 15.6]{MR1321597}, and that every uncountable Borel subset of a standard Borel space is a standard Borel space when equipped with the induced $\sigma$-algebra \cite[Cor. 13.4]{MR1321597}. Moreover, the injective Borel image of a Borel set is Borel:

\begin{thm}[{Luzin-Suslin, see \cite[Thm. 15.1]{MR1321597}}]\label{thm:luzinsuslin}Let $X$ and $Y$ be two standard Borel spaces and let $f: X\to Y$ be an injective Borel map. Then for every Borel subset $A$ of $X$, $f(A)$ is Borel.
\end{thm}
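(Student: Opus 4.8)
The plan is to follow the classical descriptive-set-theoretic route: reduce first to a continuous injection, and then reconstruct the image explicitly as a Borel set by means of a separation scheme. First I would invoke the change-of-topology theorems (see e.g.\ \cite[Thm. 13.1 and Thm. 13.11]{MR1321597}): there is a finer Polish topology on $X$, with the same Borel $\sigma$-algebra, for which $f$ becomes continuous and $A$ becomes clopen. A clopen subset of a Polish space is again Polish and carries the same Borel structure as $A$, so after replacing $X$ by $A$ it suffices to treat the case $A=X$ with $f\colon X\to Y$ a continuous injection and $X$ Polish; that is, I must show the continuous injective image $f(X)$ is Borel.

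Next I would fix a complete compatible metric on $X$ and build a Luzin scheme $(F_s)_{s\in\N^{<\N}}$ of Borel subsets of $X$ with $F_\emptyset=X$, each $F_s=\bigsqcup_{n}F_{s^\frown n}$ a \emph{disjoint} union, $\overline{F_{s^\frown n}}\subseteq F_s$, and $\mathrm{diam}(F_s)\leq 2^{-|s|}$. By injectivity, for each fixed length $k$ the analytic sets $\bigl(f(F_s)\bigr)_{|s|=k}$ are pairwise disjoint. The crucial ingredient is then the generalized Luzin separation theorem: finitely or countably many pairwise disjoint analytic sets can be enclosed in pairwise disjoint Borel sets. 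Applying it level by level and then intersecting with the parent set and with the closure $\overline{f(F_s)}$, I would obtain Borel sets $B_s$ satisfying $f(F_s)\subseteq B_s\subseteq\overline{f(F_s)}$, pairwise disjoint at each level, and nested along branches, i.e.\ $B_{s^\frown n}\subseteq B_s$.

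Finally I would verify the identity $f(X)=\bigcap_{k}\bigcup_{|s|=k}B_s$, which is manifestly Borel. The left-to-right inclusion is immediate, since the sets $\{F_s:|s|=k\}$ partition $X$, so each $x$ lies in a unique $F_{s}$ at each level and $f(x)\in f(F_s)\subseteq B_s$. For the reverse inclusion, a point $y$ in the intersection selects, by disjointness and nesting, a unique branch $\alpha\in\N^\N$ with $y\in B_{\alpha|k}\subseteq\overline{f(F_{\alpha|k})}$ for every $k$; the nested nonempty closed sets $\overline{F_{\alpha|k}}$ have diameters tending to $0$, so by completeness they intersect in a single point $x\in X$, and continuity of $f$ at $x$ forces $\overline{f(F_{\alpha|k})}$ to shrink to $\{f(x)\}$, whence $y=f(x)\in f(X)$. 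The main obstacle is the generalized separation theorem itself; it is deduced from the basic separation theorem for two disjoint analytic sets by an exhaustion argument indexed by $\N^{<\N}$, and granting it, the scheme bookkeeping above is routine.
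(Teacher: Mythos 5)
The paper never proves this statement: it is imported as a classical black box with a citation to \cite[Thm. 15.1]{MR1321597}, so there is no internal argument to compare yours against. What you have written is essentially the standard proof behind that citation, and it is sound in outline and in almost all details: the change-of-topology reduction to a continuous injection on a Polish space, the level-by-level application of the countable Luzin separation theorem to the pairwise disjoint analytic images $f(F_s)$, the recursive trimming $B_{s^\frown n}\subseteq B_s\cap\overline{f(F_{s^\frown n})}$, and the verification that $f(X)=\bigcap_k\bigcup_{|s|=k}B_s$ via completeness and continuity at the limit point all work as you describe.

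The one step that fails as literally stated is the stipulation $\overline{F_{s^\frown n}}\subseteq F_s$ in the Luzin scheme. Since $F_s=\bigcup_n F_{s^\frown n}\subseteq\bigcup_n\overline{F_{s^\frown n}}\subseteq F_s$, this condition forces every $F_s$ to be $F_\sigma$; but a naive ``cover by small balls and disjointify'' partition of a Borel set gives no control of closures, and for a genuinely non-$F_\sigma$ Borel set (e.g.\ the irrationals in $[0,1]$) no countable Borel partition with closures inside it can exist. There are three standard repairs, any of which salvages your argument verbatim: (a) simply \emph{drop} the condition --- you never actually use it, since the nesting of the closed sets $\overline{F_{\alpha\restriction k}}$ along a branch already follows from $F_{s^\frown n}\subseteq F_s$, and vanishing diameters plus completeness then yield the point $x$; (b) maintain $F_\sigma$-ness inductively, exhausting each $F_s$ by an increasing sequence of closed sets and disjointifying, so that each piece has closure inside the parent; or (c) do what Kechris's own proof of Theorem 15.1 does, namely first reduce to a continuous injection on a closed subset $C$ of $\N^\N$, where the canonical scheme $C_s=C\cap N_s$ consists of relatively clopen sets and the closure condition is automatic. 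With any of these fixes the proposal is a correct proof, matching the classical route the paper relies on.
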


A subset $A$ of a Polish space $X$ is \textbf{analytic} if there is a standard Borel space $Y$, a Borel subset $B$ of $Y$ and a Borel function $f: Y\to X$ such that $A=f(B)$. In general, analytic sets are not Borel, however they are Lebesgue-measurable (see Thm. 4.3.1  in \cite{MR1619545}). If $X$ and $Y$ are two Polish spaces, a map $f:X\to Y$ will be called analytic if the preimage of any open set is analytic. Note that analytic maps are Lebesgue-measurable by the aforementioned result.
\subsection{Polish groups}

 A topological group whose topology is Polish is called a \textbf{Polish group}. Polish groups have several good properties which we now list, for proofs see Section 1.2 of \cite{MR1425877}.

\begin{proper}\label{proper}\hfill
  \begin{description}
  \item[($\alpha$)] Let $G$ be a Polish group, and let $H$ be a subgroup of $G$. Then $H$ is Polish for the induced topology if and only if $H$ is closed in $G$.
  \item[($\beta$)]  Let $G$ be a Polish group, and let $H\normal G$ be a closed normal subgroup. Then $G/H$ is a Polish group for the quotient topology.
  \item[($\gamma$)] Let $\ph: G \to H$ be an analytic homomorphism between two Polish groups $G$ and $H$. Then $\ph$ is continuous. If moreover $\ph$ is surjective, then $\ph$ induces a topological isomorphism between $G/\ker(\ph)$ and $H$. 
  \end{description}
\end{proper}

Let us end this section by citing a deep result of Becker and Kechris, which will be crucial in the proof of our main theorem. 

\begin{thm}[\cite{MR1185149}]\label{thm:borelvscont}
Let a Polish group $G$ act in a Borel manner on a standard Borel space $X$. Then there exists a Polish topology $\tau$ on $X$ inducing its Borel structure such that the action of $G$ on $(X,\tau)$ is continuous. 
\end{thm}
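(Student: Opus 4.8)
The plan is to prove the statement via the standard ``change of topology'' technique: start from an arbitrary compatible Polish topology on $X$ and refine it, keeping the Borel structure fixed, until the action becomes continuous. Two kinds of tools drive the argument. The first are the topology-refinement lemmas: given a Polish topology on a standard Borel space and countably many Borel sets, there is a finer Polish topology, with the same Borel $\sigma$-algebra, in which all of them are clopen; likewise a single Borel automorphism can be turned into a homeomorphism by such a refinement; and, crucially, the join of an increasing sequence of Polish topologies all inducing the same Borel structure is again Polish and induces that same Borel structure (these are the results of Section 13 of \cite{MR1321597}). The second tool is the \emph{Vaught transform}: for a Borel set $A\subseteq X$ and an open set $V\subseteq G$ one sets $A^{*V}:=\{x\in X:\ \{g\in V:\ g\cdot x\in A\}\text{ is non-meager}\}$. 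I would record the facts I need: $A^{*V}$ is Borel whenever $A$ is, the operation commutes with countable unions in $A$, one has $h\cdot A^{*V}=A^{*(Vh\inv)}$, and, since a set non-meager in $V$ is non-meager in some basic open piece of $V$, it suffices to control $A^{*V}$ for $V$ ranging over a fixed countable basis of $G$.

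With these in hand, here is the construction. Fix a countable dense subgroup $D\leq G$ and a countable basis $\{V_k\}$ of $G$. Starting from any compatible Polish topology $t_0$ on $X$, I would build an increasing sequence $t_0\subseteq t_1\subseteq\cdots$ of compatible Polish topologies so that at each stage the next topology (i) makes every element of $D$ act as a homeomorphism, and (ii) turns all Vaught transforms $U^{*V_k}$ of the basic $t_n$-open sets $U$ into clopen sets; this is possible because at each stage only countably many Borel sets and countably many Borel automorphisms are involved. Let $\tau$ be the join of the $t_n$. By the join lemma, $\tau$ is Polish and induces the original Borel structure. By construction every $d\in D$ is a $\tau$-homeomorphism, and using the commutation of $A^{*V}$ with unions together with the reduction to basic $V$, one checks that $\tau$ is \emph{closed under Vaught transforms}: $U^{*V}$ is $\tau$-open for every $\tau$-open $U$ and every open $V\subseteq G$.

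It remains to deduce that the action $G\times(X,\tau)\to(X,\tau)$ is continuous, and this is the main obstacle. Given $g_0\cdot x_0\in U$ with $U$ $\tau$-open, I must produce open neighborhoods $V\ni g_0$ and $O\ni x_0$ with $V\cdot O\subseteq U$. The strategy is to combine the two features of $\tau$: the equivariance identity $h\cdot U^{*V}=U^{*(Vh\inv)}$ shows that the Vaught transforms of $U$ form a supply of $\tau$-open sets permuted by the (homeomorphic) action of $D$, which lets me position $x_0$ inside a suitable transform $U^{*V}$; and a Baire-category argument---of Pettis type, converting the non-meagerness packaged into the category quantifier defining $A^{*V}$ into an honest neighborhood of the identity in $G$---upgrades this to the desired open product neighborhood. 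The density of $D$, acting by homeomorphisms, bridges the gap between an arbitrary $g_0$ and the countably many group elements explicitly tamed in the construction. The delicate point, and where I expect all the work to concentrate, is precisely this passage from the ``generic/non-meager'' information carried by the Vaught transforms to genuine joint continuity; once it is established, compatibility of $\tau$ with the Borel structure is already guaranteed by the join lemma, which completes the proof.
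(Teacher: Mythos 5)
The paper does not actually prove this theorem: it is quoted from Becker--Kechris \cite{MR1185149}, so your sketch has to be measured against the standard proof from that reference. Your overall architecture --- iterated change of topology to open up Vaught transforms, a countable dense subgroup acting by homeomorphisms, and a Baire-category argument for joint continuity --- is indeed the skeleton of the Becker--Kechris argument, and the properties you record for the transforms $A^{*V}$ (Borelness, commutation with countable unions, the identity $h\cdot A^{*V}=A^{*(Vh\inv)}$, reduction to basic $V$) are all correct.

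There is, however, a genuine error in your choice of the final topology, and it is not a repairable piece of bookkeeping. You take $\tau$ to be the join of the increasing sequence $t_0\subseteq t_1\subseteq\cdots$, so in particular $\tau$ refines the \emph{arbitrary} initial topology $t_0$. No such topology can work in general. Take $G=X=\R$ with the translation action, and let $t_0$ be a Polish topology inducing the standard Borel structure in which $\Q$ is open (such a $t_0$ exists by the very change-of-topology lemmas you invoke). For any topology $\tau$ on $X$ refining $t_0$, continuity of the action would force $\{(g,x):g+x\in\Q\}$ to be open in $G\times(X,\tau)$; but its horizontal sections $\{g:g+x\in\Q\}=\Q-x$ are countable dense subsets of $\R$, hence not open in $G$. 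The failure is also visible inside your own argument: for $U=\Q$ and $x_0=0$ the set $\{g:g\cdot x_0\in U\}$ is meager, so $x_0$ lies in \emph{no} Vaught transform $U^{*V}$, and the ``positioning'' step you describe cannot even begin. In the actual Becker--Kechris proof the final topology is the one \emph{generated by} the Vaught transforms $U^{*V}$ alone; it is in general strictly coarser than $\bigvee_n t_n$ and does not refine $t_0$ (only $G$-\emph{invariant} auxiliary Borel sets can be kept open while preserving continuity). Consequently your closing remark that compatibility with the Borel structure ``is already guaranteed by the join lemma'' is exactly where the real work is being skipped: for the Vaught-transform topology one must prove separately that it is Polish (via the strong Choquet criterion or the ``nice topology'' machinery of \cite{MR1425877}) and that it still generates the Borel $\sigma$-algebra. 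Combined with the fact that you explicitly defer the joint-continuity argument rather than carry it out, the proposal as written does not constitute a proof.
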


\subsection{The Polish group $\Aut(X,\mu)$}

A \textbf{standard probability space} is a standard Borel space equipped with a non atomic probability measure. All standard probability spaces are isomorphic (see  \cite[Thm. 17.41]{MR1321597}). The \textbf{measure algebra} of the standard probability space $(X,\mu)$ is the $\sigma$-algebra of measurable subsets of $X$, where two such subsets are identified if their symmetric difference has measure zero. We will denote the measure algebra with $\MAlg(X,\mu)$ and recall that it is a Polish space when equipped with the topology induced by the complete metric $d_\bigtriangleup$ defined by $d_\bigtriangleup(A,B)=\mu(A\bigtriangleup B)$.% (see  \cite[Prop. A.4]{lemai2014}).

\begin{df}\label{df:topoaut}
  Let $(X,\mu)$ be a standard probability space. The group $\Aut(X,\mu)$ of measure-preserving Borel bijections of $(X,\mu)$, identified up to measure zero, carries two natural metrizable topologies :
  \begin{itemize}
  \item the \textbf{weak topology}, for which a sequence $(T_n)_n$ converges to $T$ if for every measurable subset $A\subset X$, we have $\mu(T_n(A)\Delta T(A))\rightarrow 0$. 
  \item the \textbf{uniform topology},  induced by the \textbf{uniform metric} $\d_u$ defined by \[\d_u(T,S):=\mu\left(\left\lbrace x\in X:\ Tx\neq Sx\right\rbrace\right).\]
  \end{itemize}
\end{df}
\begin{prop}[{\cite{MR0111817}}]
  The group $\Aut(X,\mu)$ is a Polish group with respect to the weak topology, and the uniform metric $\d_u$ is complete. 
\end{prop}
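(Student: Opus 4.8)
The plan is to prove the two assertions separately: the Polishness of the weak topology by realizing $\Aut(X,\mu)$ as a closed subgroup of the isometry group of a Polish metric space, and the completeness of $d_u$ by a direct Borel--Cantelli argument.

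For the first part, I would map $\Aut(X,\mu)$ into the isometry group $\mathrm{Iso}(\MAlg(X,\mu), d_\bigtriangleup)$ by sending $T$ to the Boolean map $\sigma_T\colon A\mapsto T(A)$. Since $T$ preserves $\mu$ and $T(A\bigtriangleup B)=T(A)\bigtriangleup T(B)$, each $\sigma_T$ is a $d_\bigtriangleup$-isometry, and $T\mapsto\sigma_T$ is an injective homomorphism. Under this identification the weak topology is precisely the topology of pointwise convergence on the separable space $\MAlg(X,\mu)$, because $\mu(T_n(A)\bigtriangleup T(A))\to0$ for all $A$ says exactly that $\sigma_{T_n}\to\sigma_T$ pointwise. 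I would then recall the general fact that for a complete separable metric space $M$ the group $\mathrm{Iso}(M)$, with the pointwise-convergence topology, is a Polish group: separability and metrizability come from a countable dense $D\subseteq M$ and the metric $\sum_n 2^{-n}\min(1,d(\phi(x_n),\psi(x_n)))$, continuity of multiplication is immediate, and continuity of inversion uses surjectivity of isometries together with completeness of $M$. It remains to check that the image of $\Aut(X,\mu)$ is closed. An isometry $\phi$ of $(\MAlg,d_\bigtriangleup)$ comes from a measure-preserving transformation if and only if it is a measure-algebra automorphism, i.e.\ fixes $\emptyset$ and commutes with union, intersection and complementation; this relies on the classical fact (von Neumann/Stone) that every such Boolean automorphism is induced by a genuine point map, measure preservation then being automatic from $d_\bigtriangleup$-isometry. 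Each algebraic condition, e.g.\ $\phi(A\cup B)=\phi(A)\cup\phi(B)$, is closed and, by continuity, need only be imposed for $A,B$ in a countable dense subset of $\MAlg$; thus the image is an intersection of countably many closed sets, hence closed. A closed subgroup of a Polish group is Polish (Properties~\ref{proper}), which gives the claim.

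For the completeness of $d_u$, I would start from a $d_u$-Cauchy sequence $(T_n)$ and first observe that $d_u$ is symmetric under inversion: writing $y=T_m x$ shows $\{y:T_n^{-1}y\neq T_m^{-1}y\}=T_m(\{x:T_n x\neq T_m x\})$, so by measure preservation $d_u(T_n^{-1},T_m^{-1})=d_u(T_n,T_m)$ and $(T_n^{-1})$ is Cauchy too. Passing to a subsequence with $d_u(T_{n_k},T_{n_{k+1}})<2^{-k}$, Borel--Cantelli ensures that for a.e.\ $x$ the sequence $T_{n_k}(x)$ is eventually constant, defining a measurable map $T$, and likewise the inverses define a measurable map $S$. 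One then checks that $T$ and $S$ are measure preserving, that $S=T^{-1}$ a.e.\ (so $T\in\Aut(X,\mu)$), and finally that $d_u(T_{n_k},T)\to0$, which together with the Cauchy property yields $d_u(T_n,T)\to0$.

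I expect the main obstacle to lie in two places. In the first part, the subtle point is the closedness of $\Aut(X,\mu)$ inside $\mathrm{Iso}(\MAlg)$, which genuinely uses that every measure-algebra automorphism is implemented by a point transformation; without this the image would be too large. In the second part, the delicate step is verifying that the limit $T$ is an honest element of $\Aut(X,\mu)$ rather than a mere measurable endomorphism, i.e.\ that the pointwise limit $S$ of the inverses actually inverts $T$. This is exactly what forces one to pass to a rapidly converging subsequence and to control $T_n$ and $T_n^{-1}$ simultaneously, using the inversion-invariance of $d_u$.
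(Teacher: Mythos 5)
The paper does not prove this proposition at all; it is stated as a classical fact with a citation to Halmos's \emph{Lectures on ergodic theory}, so there is no in-paper argument to compare against. Your proof is the standard modern one and is correct in both halves: realizing $\Aut(X,\mu)$ as the closed subgroup of $\mathrm{Iso}(\MAlg(X,\mu),d_\bigtriangleup)$ consisting of the Boolean automorphisms (with von Neumann's point-realization theorem doing the real work in identifying that closed subgroup with $\Aut(X,\mu)$, and measure preservation coming for free from $\phi(\emptyset)=\emptyset$ plus the isometry property), and proving completeness of $d_u$ via a rapidly convergent subsequence, Borel--Cantelli, and the identity $d_u(T^{-1},S^{-1})=d_u(T,S)$, which is exactly what lets you build the inverse of the limit and conclude that the limit is a genuine automorphism rather than just an endomorphism. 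Two small remarks, neither a gap: continuity of inversion on $\mathrm{Iso}(M)$ follows from surjectivity alone, via $d(\phi_n^{-1}(y),\phi^{-1}(y))=d(y,\phi_n(\phi^{-1}(y)))$; completeness of $M$ is instead what you need to see that $\mathrm{Iso}(M)$ is Polish (a $G_\delta$ in the relevant product). And in verifying $S\circ T=\mathrm{id}$ a.e.\ you should note explicitly that you first establish that $T$ is measure preserving, so that pulling back the small exceptional set of $S$ under $T$ keeps it small; this is implicit in your sketch but is the one place the argument could silently break if omitted.
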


\subsection{Spaces of measurable maps}\label{sec:lzero}

\begin{df}\label{df:ll0}
Let $(X,\mu)$ be a standard probability space, and let $(Y,\tau)$ be a Polish space. We denote by $\LL^0(X,\mu,(Y,\tau))$ the space of Lebesgue-measurable maps from $X$ to $Y$, identified up to measure 0.  Any compatible bounded metric $\d$ on $(Y,\tau)$ induces a metric $\tilde \d$ on $\LL^0(X,\mu,(Y,\tau))$ defined by
$$\tilde \d(f,g):=\int_X\d(f(x),g(x))\d\mu(x).$$
The topology induced by $\tilde \d$ is called the topology of \textbf{convergence in measure}.
\end{df}

This topology only depends on the topology of $Y$ by the following proposition.

\begin{prop}[{\cite[Prop. 6]{MR0414775}}]\label{prop:equivcvmeasure}Let $(f_n)$ be a sequence of elements of $\LL^0(X,\mu,(Y,\tau))$ and $f\in \LL^0(X,\mu,(Y,\tau))$. Then the following are equivalent:
\begin{enumerate}[(a)]
\item $\tilde \d(f_n,f)\to 0$,
\item \label{item:criterneighb}for all $\eps>0$, $\mu(\{x\in X: \d(f(x),f_n(x))>\eps\})\to 0$,
\item \label{item:critertopo}every subsequence of $(f_n)_{n\in\N}$ admits a subsequence $(f_{n_k})_{k\in\N}$ such that for almost all $x\in X$ we have $$f_{n_k}(x)\to f(x).$$
\end{enumerate}
\end{prop}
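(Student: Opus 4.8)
The plan is to establish the three equivalences as a cycle $(a)\Rightarrow(b)\Rightarrow(c)\Rightarrow(a)$, reducing everything to classical real-valued measure theory applied to the scalar function $x\mapsto\d(f(x),f_n(x))$. Before anything else I would record two standing facts. First, this function is measurable: since $\d:Y\times Y\to\R$ is continuous, hence Borel, and since $(Y,\tau)$ is Polish (so second-countable, whence the Borel structure of $Y\times Y$ is the product one), the map $x\mapsto(f(x),f_n(x))$ is measurable into $Y\times Y$ and composing with $\d$ gives a measurable function. Thus both the integral defining $\tilde\d(f_n,f)$ and the sets $\{x\in X:\d(f(x),f_n(x))>\eps\}$ make sense. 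Second, $\d$ is bounded, say $\d\leq M$, and $\mu$ is a probability measure, so the constant $M$ is integrable; this will be what allows passage from pointwise to integral convergence.

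For $(a)\Rightarrow(b)$ I would simply invoke Markov's inequality: for every $\eps>0$,
$$\mu(\{x\in X:\d(f(x),f_n(x))>\eps\})\leq\frac1\eps\int_X\d(f(x),f_n(x))\,d\mu(x)=\frac1\eps\tilde\d(f_n,f),$$
which tends to $0$ by hypothesis. For $(b)\Rightarrow(c)$ I would run a Borel--Cantelli argument. Given an arbitrary subsequence of $(f_n)$, it still satisfies $(b)$, so I can extract a further subsequence $(f_{n_k})_{k\in\N}$ with $\mu(\{x\in X:\d(f(x),f_{n_k}(x))>1/k\})<2^{-k}$. These measures are summable, so almost every $x$ lies in only finitely many of the corresponding sets; for such an $x$ one has $\d(f(x),f_{n_k}(x))\leq 1/k$ for all large $k$, hence $f_{n_k}(x)\to f(x)$, as required.

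For $(c)\Rightarrow(a)$ I would use the subsequence principle for the real sequence $(\tilde\d(f_n,f))_n$: it converges to $0$ as soon as every subsequence admits a further subsequence converging to $0$. Given any subsequence, $(c)$ provides a further subsequence converging almost everywhere to $f$; since $\d\leq M$ and $\mu(X)=1$, the dominated convergence theorem yields $\tilde\d\to 0$ along that further subsequence. This closes the cycle and proves the equivalence of $(a)$, $(b)$ and $(c)$.

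I do not expect a genuine obstacle here, as the whole argument is a transcription of the classical scalar theory. The only two points that genuinely require care are the measurability of $x\mapsto\d(f(x),f_n(x))$---which is precisely where the metrizable, second-countable structure of the Polish space $Y$ is used---and the appeal to boundedness of $\d$ in the dominated convergence step of $(c)\Rightarrow(a)$: without a fixed integrable dominating function, almost-everywhere convergence would not force convergence of the integrals, and the implication would fail.
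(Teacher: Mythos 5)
Your argument is correct: the cycle $(a)\Rightarrow(b)$ by Markov's inequality, $(b)\Rightarrow(c)$ by a Borel--Cantelli extraction, and $(c)\Rightarrow(a)$ by the subsequence principle plus dominated convergence (using that $\d$ is bounded and $\mu$ is a probability measure) is exactly the standard proof, and your measurability remark is the right one. The paper itself gives no proof of this proposition --- it is quoted with a citation to Moore --- so there is nothing to compare against; your write-up is a complete and valid substitute.
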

\begin{rmq}
In a topological space, a sequence converges to a point if and only if all its subsequences have a subsequence converging to this point, so item $(c)$ of the previous proposition implies that the topology of convergence in measure is the coarsest metrizable topology $\tau$ for which $f_n\to f$ a.s. implies that $f_n\to f$ with respect to $\tau$. 
\end{rmq}
%As a consequence of item $(\ref{item:criterneighb})$ of the previous proposition, we see that or all $f\in\LL^0(X,\mu,(Y,\tau))$ the family of open sets 
%$$\mathcal U_\eps(f):=\left\{g\in \LL^0(X,\mu,(Y,\tau)): \mu(\{x\in X: \d(f(x),g(x))>\eps\})<\eps\right\}$$
%for $\eps>0$, forms a base of neighborhoods of $f$.

The topology of convergence in measure on $\LL^0(X,\mu,Y)$ is a Polish topology. A dense countable subset is  constructed as follows. Fix a dense countable subset $D\subset Y$ and a dense countable subalgebra $\mathcal A$ of $\MAlg(X,\mu)$. Then, the family of $\mathcal A$-measurable functions from $X$ to $D$ that take only finitely many values is dense in $\LL^0(X,\mu,Y)$. 

When $Y=G$ is a Polish group, we equip $\LL^0(X,\mu,G)$ with the group structure given by the pointwise product, that is for $f,g\in\LL^0(X,\mu,G)$ we put $$f\cdot g(x):=f(x)g(x).$$

\begin{prop} The Polish space \label{prop:polishpointwise} $\LL^0(X,\mu,G)$ is a Polish group for the topology of convergence in measure and the pointwise product.
\end{prop}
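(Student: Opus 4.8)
The plan is to take for granted that $\LL^0(X,\mu,G)$ is a Polish space for the topology of convergence in measure: this is the general fact recalled just before the statement, valid for any Polish target, with an explicit dense countable subset exhibited there. It therefore remains to verify that the pointwise product and the pointwise inversion are well defined on measure classes and that they are continuous, so that $\LL^0(X,\mu,G)$ is a topological group.

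First I would check that the operations are well defined. Since multiplication $G\times G\to G$ and inversion $G\to G$ are continuous, hence Borel, and since $x\mapsto (f(x),g(x))$ is measurable from $X$ to $G\times G$ whenever $f,g$ are measurable, the maps $x\mapsto f(x)g(x)$ and $x\mapsto f(x)\inv$ are again measurable; moreover they depend only on the measure classes of $f$ and $g$. Thus $f\cdot g$ and $f\inv$ genuinely define elements of $\LL^0(X,\mu,G)$.

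The heart of the argument is continuity, and here I would exploit the metrizability of the topology together with the subsequence criterion (item (c) of Proposition \ref{prop:equivcvmeasure}), which reduces convergence in measure to almost everywhere convergence along subsequences. For joint continuity of multiplication, suppose $f_n\to f$ and $g_n\to g$ in measure. Given an arbitrary subsequence of $(f_ng_n)$, I would first extract a sub-subsequence along which $f_n\to f$ almost everywhere, and then a further sub-subsequence along which $g_n\to g$ almost everywhere as well. Along this final subsequence both factors converge pointwise a.e., so by continuity of the product in $G$ we get $f_n(x)g_n(x)\to f(x)g(x)$ for almost every $x$, hence convergence in measure. Since every subsequence of $(f_ng_n)$ admits a sub-subsequence converging in measure to $fg$, the whole sequence converges to $fg$; as the topology is metrizable, this establishes joint continuity of the product.

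Continuity of inversion is handled by the same device: from $f_n\to f$ in measure, any subsequence has a sub-subsequence with $f_n\to f$ a.e., whence $f_n(x)\inv\to f(x)\inv$ a.e. by continuity of inversion in $G$, so $f_n\inv\to f\inv$ in measure. I do not expect any serious obstacle here; the only point requiring care is the nested extraction of subsequences in the multiplication step, which is precisely what lets one upgrade the convergence in measure of $f_n$ and $g_n$ to simultaneous a.e. convergence along a common subsequence, and thereby reach the a.e. convergence of the products.
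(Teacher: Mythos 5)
Your proof is correct. The paper in fact states this proposition without giving a proof, and your argument is precisely the standard one the authors presumably had in mind: well-definedness on measure classes via Borel measurability of the group operations, and continuity via the nested extraction of almost-everywhere convergent subsequences, which is exactly the subsequence characterization of convergence in measure from Proposition \ref{prop:equivcvmeasure}(c) that the paper itself invokes elsewhere (e.g.\ to prove continuity of $\Phi$ in Theorem \ref{thm:polishfullgroups}).
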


Let $(Y,\tau)$ be a Polish space. Then $\Aut(X,\mu)$ acts on the right on $\LL^0(X,\mu,(X,\tau))$ by pre-composition: if $T\in \Aut(X,\mu)$ and $f\in \LL^0(X,\mu,(Y,\tau))$, then we define \linebreak $(f\cdot T)(x):=f(T x)$. Note that this is an action by isometries. Moreover, when $Y=X$, we may view $\Aut(X,\mu)$ as a subset of $\LL^0(X,\mu,(X,\tau))$ identifying a transformation $T$ with the function $f_T(x)=T(x)$, which corresponds to identifying $\Aut(X,\mu)$ with the orbit of $\mathrm{id}_X\in\LL^0(X,\mu,(X,\tau)$.

\begin{prop} \label{prop:continuity}
  Let $(X,\mu)$ be a standard probability space equipped with a compatible Polish topology $\tau_X$, and let $(Y,\tau_Y)$ be a Polish space.
  \begin{enumerate}[(1)]
    \item The action of $\Aut(X,\mu)$ on $\LL^0(X,\mu,(Y,\tau_Y))$ is continuous.
  \item The inclusion $\Aut(X,\mu)\into \LL^0(X,\mu,(X,\tau_X))$ is an embedding.
  \end{enumerate}
\end{prop}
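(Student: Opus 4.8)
The plan is to treat the two statements separately, using throughout that both $\Aut(X,\mu)$ (with the weak topology) and $\LL^0(X,\mu,(Y,\tau_Y))$ are metrizable, so that continuity may be checked on sequences. I fix once and for all a bounded compatible metric $\d$ on $Y$; by Proposition \ref{prop:equivcvmeasure} the resulting topology of convergence in measure does not depend on this choice.

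For $(1)$, I would first reduce joint continuity to continuity in the $\Aut(X,\mu)$-variable alone. Given $T_n\to T$ weakly and $f_n\to f$ in measure, the triangle inequality gives $\tilde\d(f_n\cdot T_n,f\cdot T)\leq \tilde\d(f_n\cdot T_n,f\cdot T_n)+\tilde\d(f\cdot T_n,f\cdot T)$; since the action is by isometries, the first term equals $\tilde\d(f_n,f)\to 0$, so it suffices to show $\tilde\d(f\cdot T_n,f\cdot T)\to 0$ for a fixed $f$. I would establish this first for $f$ taking finitely many values, say $f=\sum_{i=1}^k y_i\mathbf 1_{A_i}$ with the $A_i$ partitioning $X$. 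Then $f\cdot T_n=\sum_i y_i\mathbf 1_{T_n^{-1}(A_i)}$ and $f\cdot T=\sum_i y_i\mathbf 1_{T^{-1}(A_i)}$, so these two functions differ only on $\bigcup_i\bigl(T_n^{-1}(A_i)\triangle T^{-1}(A_i)\bigr)$, a set whose measure tends to $0$ because $T_n\to T$ weakly (equivalently $T_n^{-1}\to T^{-1}$ weakly, inversion being continuous since $\Aut(X,\mu)$ is a topological group). As $\d$ is bounded, $\tilde\d(f\cdot T_n,f\cdot T)\to 0$. For general $f$ I would approximate it in measure by finitely-valued functions $f_m$ (dense by the description of $\LL^0$ recalled above) and finish with an $\eps/3$ argument, once more using that the action is isometric to bound $\tilde\d(f\cdot T_n,f_m\cdot T_n)=\tilde\d(f,f_m)$ uniformly in $n$.

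For $(2)$, continuity of the inclusion is immediate from $(1)$: writing $\mathrm{id}_X$ for the identity viewed as an element of $\LL^0(X,\mu,(X,\tau_X))$, the map $T\mapsto f_T$ is exactly the orbit map $T\mapsto \mathrm{id}_X\cdot T$, hence continuous; injectivity is clear since $f_T=f_S$ in $\LL^0$ means $T=S$ a.e. The substantive point is continuity of the inverse on the image, i.e. showing that $f_{T_n}\to f_T$ in measure (which is $T_n\to T$ in measure as maps $X\to X$) implies $T_n\to T$ weakly. I would fix a bounded continuous $\phi:X\to\R$; by the continuous-mapping criterion $(c)$ of Proposition \ref{prop:equivcvmeasure}, $\phi\circ T_n\to\phi\circ T$ in measure, and since these functions are uniformly bounded this upgrades to convergence in $\LL^1(X,\mu)$. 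Approximating $\mathbf 1_A$ in $\LL^1$ by bounded continuous functions (possible because $\mu$ is a Borel measure on the metrizable space $(X,\tau_X)$) and transporting the approximation through $T_n,T$ via measure-preservation, I would deduce $\mu(T_n^{-1}(A)\cap B)\to\mu(T^{-1}(A)\cap B)$ for all measurable $A,B$.

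Finally I would convert this into weak convergence via the change of variables $\mu(T^{-1}(A)\cap B)=\mu(A\cap T(B))$, valid since $T$ is a measure-preserving bijection: after relabelling, the displayed convergence reads $\mu(T_n(A)\cap B)\to\mu(T(A)\cap B)$ for all $A,B$, and taking $B$ equal to $T(A)$ and then to its complement yields $\mu(T_n(A)\triangle T(A))\to 0$ for every $A$, which is precisely $T_n\to T$ weakly. I expect the heart of the proof to lie in part $(2)$, bridging convergence in measure of the maps and weak convergence of the transformations: the regularity-based approximation of indicators by continuous functions and the measure-preserving identity $\mu(T^{-1}(A)\cap B)=\mu(A\cap T(B))$ do the real work, whereas part $(1)$ and the continuity half of part $(2)$ are routine.
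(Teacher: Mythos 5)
Your proof is correct. Part $(1)$ is essentially the paper's argument: the same isometry-based reduction to continuity in the $\Aut(X,\mu)$-variable for a fixed $f$, the same reduction to finitely-valued $f$, and the same appeal to weak convergence on the sets of the induced partition (the paper phrases the reduction as ``may assume $T_n\to\mathrm{id}_X$'' rather than keeping a general limit $T$, but this is cosmetic). Part $(2)$ is where you genuinely diverge. The paper argues directly with neighborhoods: it takes the pre-basis of weak neighborhoods of the identity given by closed sets $F$ (using inner regularity of $\mu$), exploits that $F=\bigcap_n (F)_{1/n}$ to choose $\delta$ with $\mu((F)_\delta\setminus F)<\eps$, and shows that $\tilde d_X(T,\mathrm{id}_X)\leq\delta^2$ forces $\mu(T(F)\bigtriangleup F)\leq 3\eps$ --- a short, quantitative containment of neighborhoods. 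You instead run a sequential argument: continuous-mapping plus dominated convergence upgrades $T_n\to T$ in measure to $\phi\circ T_n\to\phi\circ T$ in $\LL^1$ for bounded continuous $\phi$, then $\LL^1$-density of such $\phi$ among indicators (again regularity of $\mu$) together with measure-preservation of the $T_n$ gives $\mu(T_n^{-1}(A)\cap B)\to\mu(T^{-1}(A)\cap B)$, and the identity $\mu(T^{-1}(A)\cap B)=\mu(A\cap T(B))$ converts this into $\mu(T_n(A)\bigtriangleup T(A))\to 0$. Both routes rest on regularity of the Borel measure on the chosen Polish model; yours is a little longer but follows the standard ``convergence in measure controls pushforward/weak data'' template and needs no explicit metric neighborhoods of closed sets, while the paper's gives an explicit $\eps$--$\delta$ neighborhood containment. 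Either is a complete proof.
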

\begin{proof}
$(1)$. 
Fix a compatible complete bounded metric $\d_Y$ on $Y$. Suppose now that $T_n\rightarrow T$ and $f_n\rightarrow f$, we want to prove that $\tilde\d_Y(f_nT_n,fT)\rightarrow 0$. Since each $T_n$ is an isometry, \[\tilde\d_Y(f_nT_n,fT)=\tilde\d_Y(f_n, fTT_n\inv)\leq \tilde\d_Y(f_n,f)+\tilde \d_Y(f, fTT_n\inv),\]
hence it is enough to show that if $T_n\rightarrow \mathrm{id}_X$ in $\Aut(X,\mu)$, then for every $f\in\LL^0(X,\mu,(Y,\tau))$ we have $fT_n\to f$ in measure.
Moreover we can suppose that $f$ has finite range, because the set of such functions is dense. For such a function $f$, we can consider the finite partition of the space given by $\{f^{-1}(a)\}_{a\in f(X)}$ and by definition of weak convergence $\mu(T_n f^{-1}(a)\Delta f^{-1}(a))\rightarrow 0$ for every $a\in f(X)$. So $\mu(\{x\in X: fT_n(x)\neq f(x)\})\to 0$, in particular $fT_n\to f$ in measure.

 $(2)$. 
Since $\Aut(X,\mu)$ can be identified with the orbit of $\mathrm{id}_X$ in $\LL^0(X,\mu,X)$, $(1)$ implies that the inclusion is continuous. 
 
To see that it is an embedding,  we first note that since $\Aut(X,\mu)$ acts by homeomorphisms on $\LL^0(X,\mu,\tau)$, a sequence $(T_n)_n$ converges to $T$ in measure if and only if $T_nT\inv\to\mathrm{id}_X$ in measure. So it is enough to show that every weak neighborhood of the identity contains a neighborhood of the identity for the topology of convergence in measure.

Given $r>0$ and a subset $A$ of $X$, we let $(A)_{r}:=\{y\in X: \exists x\in A, \d(x,y)<r\}$ be its $r$-neighborhood. Since the measure $\mu$ is regular, a pre-basis of neighborhoods of the identity for the weak topology on $\Aut(X,\mu)$ is given by the sets 
$$\mathcal V_{F,\eps}=\{T\in\Aut(X,\mu): \mu(F\bigtriangleup T(F))<\eps\},$$ where $F$ is closed and $\eps$ is positive.  
So fix such a closed set $F$ and $\eps>0$. Since $F$ is closed, we have that $F=\bigcap_{n\in\N} (F)_{\frac 1n}$, so there exists $\delta>0$ with $\delta<\eps$, such that $\mu( (F)_\delta\setminus F)<\eps$.

Now suppose that $\tilde d_X(T,\mathrm{id}_X)\leq \delta^2$, which implies that $\mu\left(\left\lbrace x\in X:\ \d_X(Tx,x)>\delta\right\rbrace\right)<\delta$. Then $\mu(T(F)\setminus (F)_\delta)<\delta<\eps$ and since $T$ preserves the measure, \[\mu(T(F)\Delta F)\leq \mu(T(F)\Delta (F)_\delta)+\mu((F)_\delta\Delta F)\leq 3\eps.\]

We conclude that $T\in\mathcal V_{F,3\epsilon}$, and so the topology of  convergence in measure refines the weak topology, which ends the proof.
\end{proof}

Combining the above proposition and Proposition \ref{prop:polishsub}, we deduce that $\Aut(X,\mu)$ is a $G_\delta$ in $\LL^0(X,\mu,(X,\tau))$. However, it is not closed, and one can actually show that its closure consists in the monoid of all measure preserving maps $(X,\mu)\to(X,\mu)$. 

\section{Full groups}

\subsection{Definition and fundamental facts}

Let us start by recalling the original definition of full groups introduced by Dye in his pioneering work \cite{MR0131516}, which is the starting point of our paper. 

\begin{df}
Let $(T_n)_{n\in\N}$ be a sequence of elements of $\Aut(X,\mu)$. We say that $T\in\Aut(X,\mu)$ is obtained by \textbf{cutting and pasting} the sequence $(T_n)_n$ if there exists a partition $(A_n)_{n\in\N}$ of $X$ such that for every $n\in\N$,
$$T_{\restriction A_n}=T_{n\restriction A_n}.$$
We will also say that $T$ is obtained by cutting and pasting  $(T_n)_{n\in\N}$ along $(A_n)_{n\in\N}$.
\end{df}

\begin{df}[Dye]\label{df:fullgroup}
A subgroup $\mathbb G$ of $\Aut(X,\mu)$ is a \textbf{full group} if it is stable under cutting and pasting any sequence of elements of $\mathbb G$. 
\end{df}

Before defining the main examples of full groups, let us point out a fundamental fact.

\begin{prop}[{\cite[Lem. 5.4]{MR0131516}}]\label{prop:dyeunif}
The restriction of the uniform metric $d_u$ to any full group is complete.
\end{prop}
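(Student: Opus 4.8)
The plan is to reduce completeness of the restricted metric to a closedness statement, and then to exploit the defining stability property of full groups. Since we are already told that $d_u$ is complete on all of $\Aut(X,\mu)$, and since a subset of a complete metric space is complete for the induced metric exactly when it is closed, it suffices to prove that every full group $\mathbb G$ is closed in $(\Aut(X,\mu),d_u)$. Concretely, I would take a sequence $(T_n)_n$ in $\mathbb G$ which is Cauchy for $d_u$, let $T\in\Aut(X,\mu)$ be its limit (which exists by completeness of $d_u$ on $\Aut(X,\mu)$), and show $T\in\mathbb G$.

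First I would translate $d_u$-convergence into a statement about agreement sets. Setting $C_n:=\{x\in X: T(x)=T_n(x)\}$, the definition of $d_u$ gives $\mu(X\setminus C_n)=d_u(T,T_n)\to 0$. Consequently the union $\bigcup_n C_n$ is co-null, since $\mu(X\setminus\bigcup_n C_n)\leq \mu(X\setminus C_n)\to 0$ for every $n$. I would then disjointify these sets to produce a partition: put $B_1:=C_1\cup\big(X\setminus\bigcup_n C_n\big)$ and $B_n:=C_n\setminus\bigcup_{j<n}C_j$ for $n\geq 2$, so that $(B_n)_n$ is a measurable partition of $X$ and, for every $n\geq 2$, one has $T=T_n$ on $B_n$, while $T=T_1$ on $B_1$ up to the null set $X\setminus\bigcup_n C_n$.

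The key step is now to recognize $T$ as a cutting and pasting of elements of $\mathbb G$. Let $\hat T$ be the transformation obtained by cutting and pasting $(T_n)_n$ along $(B_n)_n$, that is $\hat T_{\restriction B_n}=T_{n\restriction B_n}$. By construction $\hat T$ agrees with $T$ everywhere except possibly on the null set $X\setminus\bigcup_n C_n$, so $\hat T=T$ as elements of $\Aut(X,\mu)$; in particular $\hat T$ is a genuine measure-preserving bijection. Since $\hat T$ is obtained by cutting and pasting a sequence of elements of the full group $\mathbb G$, Definition \ref{df:fullgroup} gives $\hat T\in\mathbb G$, whence $T\in\mathbb G$. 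This shows $\mathbb G$ is closed, and therefore complete for the induced metric.

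I expect the only genuinely delicate point to be the bookkeeping that makes the cut-and-paste legitimate: the definition of full group presupposes that the pasted map already lies in $\Aut(X,\mu)$, so I must be sure that $\hat T$, rather than being some arbitrary non-invertible pasting, coincides almost everywhere with the limit $T$ that is known a priori to be measure-preserving and invertible. Handling the leftover null set $X\setminus\bigcup_n C_n$ by absorbing it into $B_1$ is exactly what guarantees this, since equality in $\Aut(X,\mu)$ is only required up to measure zero.
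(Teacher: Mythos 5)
Your proof is correct. The paper does not actually prove this proposition (it cites Dye's Lemma 5.4), but your argument is the standard one: reduce completeness to closedness using the completeness of $d_u$ on all of $\Aut(X,\mu)$, note that the limit $T$ agrees with $T_n$ off a set of measure $d_u(T,T_n)\to 0$, and recognize $T$ as a cutting and pasting of the $T_n$ along the disjointified agreement sets, so that Definition \ref{df:fullgroup} puts $T$ back in $\mathbb G$. Your handling of the leftover null set is fine and in fact automatic, since elements of $\Aut(X,\mu)$ and the cut-and-paste condition are only required to hold up to measure zero.
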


\begin{df}
Suppose $\mathcal R$ is an equivalence relation on a standard probability space $(X,\mu)$. The \textbf{full group of the equivalence relation $\mathcal R$}, denoted by $[\mathcal R]$, is the set of all $T\in\Aut(X,\mu)$ such that  for all  $x\in X$,  $(x,T(x))\in\mathcal R.$
\end{df}
\begin{rmq}In the previous definition, we require that $(x,T(x))\in\mathcal R$ for \textit{all} $x\in X$, but up to modifying $T$ on a measure zero set we could as well ask that this holds for \textit{almost} all $x\in X$. Indeed, if $T$ satisfies the latter condition, let $A$ be the full measure set of $x\in X$ such that $(x,T(x))\in \mathcal R$. Then $A$ contains the full measure $T$-invariant set $B:=\bigcap_{n\in\Z} T^n(A)$ and $T$ coincides up to measure zero with the bijection $T'$ defined by 
$$T'(x):=\left\{\begin{array}{ll}T(x) & \text{ if }x\in B, \\x & \text{ else.}\end{array}\right.$$
It is then clear that for all $x\in X$, we have $(x,T'(x))\in \mathcal R$.
\end{rmq}

%The next definition is key to our work, because it encompasses all the examples of Polish full groups that we can build. 

A special and important case of the previous definition is when the equivalence relation is given by the action of a group. 

\begin{df}
Let $G$ be a Polish group acting in a Borel manner\footnote{This just means that the action map $G\times X\to X$ is Borel. We do not assume here that the action is measure preserving.} on a standard probability space $(X,\mu)$. The associated \textbf{orbit full group} is the set of all  
\[T\in \Aut(X,\mu)\text{ such that }T(x)\in G\cdot x,\text{ for all }x\in X.\]
In other words, it is the full group of the orbit equivalence relation $\mathcal R_G$ defined by 
$ (x,y)\in\mathcal R_G \text{ whenever }\exists g\in G: g\cdot x=y,$
and we will accordingly denote it by $[\mathcal R_G]$.
\end{df}

We want to stress that the previous definition makes sense only for \textit{spatial} $G$-actions: we need a genuine $G$-action on $X$ in order to define $[\mathcal R_G]$, and not just a morphism $G\to \Aut(X,\mu)$. The orbit full group should not be confused with the following one.

\begin{df}
 Let $G$ be a subgroup of $\Aut(X,\mu)$. There exists a smallest full group containing $G$, whose elements are obtained by cutting and pasting elements of $G$. This is the \textbf{full group generated} by $G$, denoted by $[G]_D$.
\end{df}

If $G$ acts faithfully on $(X,\mu)$ and preserves the probability measure $\mu$, then we have an injective map $G\hookrightarrow \Aut(X,\mu)$ and we clearly have $[G]_D\subset [\mathcal R_G]$. The inclusion is in general strict, as shown in Example \ref{ex:dyestrictsub} and Example \ref{ex:kolmogorovex}. 

Let us now give some concrete examples. We start with full groups of countable measure preserving equivalence relations, which are exactly the countably generated full groups.

\begin{exemple} 
Suppose $\Gamma$ is a countable group acting on the standard probability space $(X,\mu)$ by measure preserving automorphisms . Then one can easily check that the two full groups defined above coincide: $[\Gamma]_D=[\mathcal R_\Gamma]$. Moreover, the orbit full group $[\mathcal R_\Gamma]$ is separable (see \cite[Prop. 3.2]{MR2583950})  and hence it is Polish with respect to the uniform topology by Proposition \ref{prop:dyeunif}  (this follows also from the proof of Theorem \ref{thm:polishfullgroups}). 
\end{exemple}

As a matter of fact, the full groups given by the previous example are the only full groups that are Polish for the uniform topology. 

\begin{prop}\label{prop:polishfordu}
  A full group $\mathbb G<\Aut(X,\mu)$ is Polish with respect to the uniform topology if and only if it is the full group of a countable probability measure preserving equivalence relation. 
\end{prop}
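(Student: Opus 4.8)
The plan is to reduce the statement to a question of separability and then build an explicit countable equivalence relation. Recall first that by Proposition~\ref{prop:dyeunif} the metric $d_u$ is complete on every full group, and that $d_u$ is bi-invariant (right-invariance uses that the elements of $\Aut(X,\mu)$ preserve $\mu$), so $(\mathbb G,d_u)$ is a topological group carrying a complete metric. Since a completely metrizable space is Polish precisely when it is separable, I would first observe that $\mathbb G$ is Polish for the uniform topology if and only if it is separable. The implication $(\Leftarrow)$ is then immediate: if $\mathbb G=[\mathcal R]$ for a countable pmp equivalence relation $\mathcal R$, then $\mathcal R=\mathcal R_\Gamma$ for some countable group $\Gamma$ (Feldman--Moore), and $[\mathcal R_\Gamma]$ is separable by the Example preceding the statement, hence Polish.

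For the nontrivial direction $(\Rightarrow)$, I would assume $\mathbb G$ is Polish, hence separable, and fix a countable $d_u$-dense subset $\{T_n:n\in\N\}$ of $\mathbb G$. Let $\Gamma=\langle T_n:n\in\N\rangle$ be the (countable) subgroup of $\Aut(X,\mu)$ that they generate; since each $T_n$ is measure-preserving, $\mathcal R_\Gamma$ is a countable pmp equivalence relation, and I claim $\mathbb G=[\mathcal R_\Gamma]$. The inclusion $[\mathcal R_\Gamma]\subseteq\mathbb G$ comes from fullness: we have $\Gamma\subseteq\mathbb G$, and since $\mathbb G$ is a full group, the smallest full group containing $\Gamma$ lies in $\mathbb G$; by the Example, $[\Gamma]_D=[\mathcal R_\Gamma]$, so $[\mathcal R_\Gamma]\subseteq\mathbb G$.

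The reverse inclusion is the heart of the argument, and I would prove it by a Borel--Cantelli approximation. Given $T\in\mathbb G$, use density to choose indices $n_k$ with $d_u(T,T_{n_k})=\mu(\{x:Tx\neq T_{n_k}x\})<2^{-k}$. Since $\sum_k 2^{-k}<\infty$, Borel--Cantelli shows that for almost every $x$ one has $Tx=T_{n_k}x$ for all sufficiently large $k$; in particular $Tx\in\Gamma\cdot x$ for almost every $x$, so $(x,Tx)\in\mathcal R_\Gamma$ almost everywhere. By the remark following the definition of $[\mathcal R]$, modifying $T$ on a null set produces an element of $[\mathcal R_\Gamma]$ that agrees with $T$ in $\Aut(X,\mu)$, whence $T\in[\mathcal R_\Gamma]$ and $\mathbb G\subseteq[\mathcal R_\Gamma]$. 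Combining the two inclusions yields $\mathbb G=[\mathcal R_\Gamma]$, as desired.

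I expect the only genuine obstacles to be the two reductions. The first is confirming that Polishness of $\mathbb G$ in the uniform topology is equivalent to separability, which rests entirely on Dye's completeness result (Proposition~\ref{prop:dyeunif}). The second, and more essential, is the density-plus-Borel--Cantelli step, which is precisely where separability is converted into the statement that $\mathbb G$ is the full group of a \emph{single} countable group; the key point there is that approximation in $d_u$ is strong enough, via summable errors, to force pointwise membership $Tx\in\Gamma\cdot x$ almost everywhere. Everything else is bookkeeping with the already-established identity $[\Gamma]_D=[\mathcal R_\Gamma]$ and the fullness of $\mathbb G$.
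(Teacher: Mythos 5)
Your proof is correct and follows essentially the same route as the paper: reduce Polishness to separability via Dye's completeness of $d_u$, take a countable dense subgroup $\Gamma$, and show $\mathbb G=[\mathcal R_\Gamma]$ using fullness for one inclusion. The only cosmetic difference is in the reverse inclusion, where the paper simply observes that $[\Gamma]$ is $d_u$-complete, hence closed, and contains the dense set $\Gamma$, whereas you unfold this into an explicit pointwise approximation argument (for which Borel--Cantelli is slight overkill: $\{x:Tx\notin\Gamma\cdot x\}$ is contained in $\{x:Tx\neq T_{n_k}x\}$ for every $k$, so it is already null).
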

\begin{proof}
If $\mathbb G\leq\Aut(X,\mu)$ is a full group separable for the uniform topology, we can choose a dense countable subgroup $\Lambda\subset \mathbb G$ so that $[\Lambda]$ is a closed subgroup of $\mathbb G$ that contains $\Lambda$, hence it has to be equal to $\mathbb G$. On the other hand, any countable pmp equivalence relation comes from the action of a countable group by a result of Feldman and Moore \cite{MR0578656}, hence it is Polish for the uniform topology.
\end{proof}

\begin{exemple}\label{ex:dyestrictsub}
Let $G$ be an infinite compact group and denote its Haar measure by $\mu$. The  action of $G$ on itself by left translation generates the transitive equivalence relation on $G$, so its orbit full group is by definition $[\mathcal R_G]=\Aut(G,\mu)$. In particular, it is Polish for the weak topology. 

We remark now that the full group $[G]_D$ generated by $G$ may be strictly smaller than $[\mathcal R_G]$. Indeed, let us consider the circle group $G=\mathbb S^1$ acting on itself by translation, and suppose that the map $T: g\mapsto g\inv$ was obtained by cutting and pasting the translations by some $g_i$ along $A_i$. Fix an index $i$ and $h,h'\in A_i$ such that $h^2\neq {h'}^2$. Since $h\in A_i$, we have that $h\inv=g_ih$ and since $h'\in A_i$, we also get ${h'}\inv=g_ih'$, hence $h^2=g_i\inv={h'}^2$, a contradiction.
%Observe also that given any Borel probability measure $\nu$ on $G$, we still have that $[\mathcal R_G]=\Aut(X,\nu)$ but $G$ is not necessarily a subgroup of $\Aut(G,\nu)$. 
\end{exemple}

We say that a subgroup $G$ of $\Aut(X,\mu)$ is \textbf{ergodic} if for every $A\subset X$ such that $\mu(A\Delta gA)=0$ for every $g\in G$, we have that $A$ has either measure zero or full measure. This is equivalent to say that the only $G$-invariant elements of $\MAlg(X,\mu)$ are $X$ and $\emptyset$. Note that $G$ is ergodic if and only if $[G]_D$ is. Ergodic full groups have the following very useful property.
\begin{prop}\label{prop:transsamemeas}
Let $\mathbb G$ be an ergodic full group and let $A,B\in\MAlg(X,\mu)$ such that $\mu(A)=\mu(B)$. Then there exists an involution $T\in\mathbb G$ such that $T(A)=B$. 
\end{prop}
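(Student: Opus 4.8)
The plan is to reduce to swapping the ``non-overlapping'' parts of $A$ and $B$ and then to build the desired involution by an exhaustion argument powered by ergodicity. First I would set $A'=A\setminus B$ and $B'=B\setminus A$; since $\mu(A)=\mu(B)$ these are disjoint sets of equal measure, and it suffices to produce an involution $T\in\mathbb G$ that swaps $A'$ with $B'$ and fixes everything outside $A'\cup B'$, because then $T(A)=T((A\cap B)\cup A')=(A\cap B)\cup B'=B$, using that $A\cap B$ is disjoint from $A'$ and $B'$. If $\mu(A')=0$ we are done with $T=\mathrm{id}$, so I would assume $\mu(A')>0$.

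The key step is the following consequence of ergodicity, which I would isolate as a lemma: if $C,D\in\MAlg(X,\mu)$ have positive measure, then there is $g\in\mathbb G$ with $\mu(g(C)\cap D)>0$. To prove it \emph{without} assuming $\mathbb G$ separable, I would take the essential supremum $\tilde C=\bigvee_{g\in\mathbb G} g(C)$ in the (complete) measure algebra. Since the action of any $h\in\mathbb G$ on $\MAlg(X,\mu)$ is a Boolean automorphism and hence preserves suprema, $h(\tilde C)=\bigvee_{g}(hg)(C)=\tilde C$, so $\tilde C$ is $\mathbb G$-invariant; ergodicity together with $C\subseteq\tilde C$ then forces $\mu(\tilde C)=1$. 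If $\mu(g(C)\cap D)=0$ for every $g$, then every $g(C)$ lies in $X\setminus D$ modulo null sets, hence so does $\tilde C$, contradicting $\mu(\tilde C)=1$ and $\mu(D)>0$. Applied to $C_0:=C\cap g\inv(D)$, this lemma produces, for any disjoint positive-measure $C,D$, an element $g$ and a positive-measure $C_0\subseteq C$ with $g(C_0)\subseteq D$.

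With the lemma in hand I would run a greedy exhaustion of $A'$ by $B'$. Writing $C^{(n)},D^{(n)}$ for the still-unmatched parts of $A',B'$ after $n$ steps (which always have equal measure), I would set
$$s_n=\sup\{\mu(C_0):\ \exists g\in\mathbb G,\ C_0\subseteq C^{(n)},\ g(C_0)\subseteq D^{(n)}\}$$
and choose $C_n\subseteq C^{(n)}$ and $g_n\in\mathbb G$ with $g_n(C_n)\subseteq D^{(n)}$ and $\mu(C_n)\geq s_n/2$. Since $\sum_n\mu(C_n)\leq\mu(A')<\infty$ we get $\mu(C_n)\to0$ and hence $s_n\to0$; if the leftover measure $m_\infty$ were positive, the lemma would give a match of positive measure inside the leftovers, bounded above by every $s_n$, a contradiction. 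Thus $\bigcup_n C_n=A'$ and $\bigcup_n g_n(C_n)=B'$ modulo null sets.

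Finally I would define $T$ to equal $g_n$ on $C_n$, $g_n\inv$ on $g_n(C_n)$, and the identity elsewhere. This is a measure-preserving involution swapping $A'$ and $B'$, and it lies in $\mathbb G$ precisely because it is obtained by cutting and pasting the sequence $(g_0,g_0\inv,g_1,g_1\inv,\ldots,\mathrm{id})$ along the partition consisting of the $C_n$, the $g_n(C_n)$, and the remaining set, and $\mathbb G$ is a full group (and a group, so each $g_n\inv\in\mathbb G$). I expect the main obstacle to be the ergodicity lemma in the non-separable case: the $\mathbb G$-saturation of $C$ cannot in general be captured by a countable subfamily, so it is the essential-supremum argument in the measure algebra, rather than a naive union over $\mathbb G$, that makes the invariance step legitimate.
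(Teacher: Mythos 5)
Your proof is correct. Note that the paper itself gives no proof of this proposition --- it is stated as a classical fact (going back to Dye and used by Fathi), so there is nothing to compare against; your argument is the standard one: reduce to swapping $A\setminus B$ with $B\setminus A$, prove via ergodicity that positive-measure sets can be partially matched by elements of $\mathbb G$, exhaust greedily, and cut-and-paste the resulting partial isomorphisms into an involution. The one point where a naive argument would break down --- that $\mathbb G$ need not be separable, so the $\mathbb G$-saturation of $C$ cannot be taken as a countable union --- is correctly handled by passing to the essential supremum $\bigvee_{g\in\mathbb G}g(C)$ in the complete Boolean algebra $\MAlg(X,\mu)$, which is preserved by each automorphism and is therefore invariant; this matches the paper's definition of ergodicity (invariance modulo null sets).
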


This proposition was used used by Fathi to show the following result (see \cite{MR0486410}; he proves this for $\Aut(X,\mu)$, but the same argument works for any ergodic full group). 

\begin{thm}A full group is ergodic if and only if it is simple.
\end{thm}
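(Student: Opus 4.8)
The plan is to prove both implications, treating the non-ergodic direction as the warm-up and concentrating the real work on ergodic $\Rightarrow$ simple.

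For the contrapositive of one implication, suppose $\mathbb G$ is not ergodic, so there is an invariant $A\in\MAlg(X,\mu)$ with $0<\mu(A)<1$. Then the set $N=\{T\in\mathbb G: T(x)=x \text{ for a.e. } x\in A\}$ of elements supported off $A$ is the kernel of the restriction homomorphism $\mathbb G\to\Aut(A,\mu|_A)$, hence normal. Since $A$ is invariant, both $A$ and its complement carry nontrivial elements of the (aperiodic) full group, so $N$ is a proper nontrivial normal subgroup and $\mathbb G$ is not simple.

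For the ergodic $\Rightarrow$ simple direction, first I would record two structural facts. (i) By Proposition \ref{prop:transsamemeas}, any two disjoint sets of equal measure can be swapped by an involution in $\mathbb G$; iterating this and using the non-atomicity of $\mu$ shows that $\mathbb G$ is generated by involutions, indeed by \emph{transpositions} $\tau$ of the form ``swap two disjoint sets $P,P'$ of equal measure, fix the rest''. (ii) Two such transpositions are conjugate in $\mathbb G$ exactly when their supports have equal measure, again by applying Proposition \ref{prop:transsamemeas} to build the conjugating element. Now take a nontrivial normal subgroup $N$ and any $T\in N$ with $T\ne\mathrm{id}$. Choosing $A$ with $\mu(A)>0$ and $A\cap TA=\emptyset$ (possible because $T\ne\mathrm{id}$) and letting $U\in\mathbb G$ be the involution swapping two halves $A_1,A_2$ of $A$, the commutator $[T,U]=TUT^{-1}U^{-1}=(TUT^{-1})U$ lies in $N$ by normality (write it as $T\cdot(UT^{-1}U^{-1})$ with both factors in $N$) and equals the product of the two disjoint transpositions swapping $A_1\leftrightarrow A_2$ and $TA_1\leftrightarrow TA_2$. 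In particular $N$ contains a nontrivial involution.

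Combining this with normality and fact (ii), $N$ contains the whole conjugacy class of that involution, namely all involutions that are products of two disjoint transpositions of the corresponding measure. The hard part is the final bootstrapping: upgrading ``$N$ contains all double transpositions of one fixed measure'' to ``$N$ contains every transposition'', after which generation fact (i) forces $N=\mathbb G$. This requires (a) producing transpositions of arbitrarily small measure inside $N$, which I would do by embedding a small transposition together with a compensating transposition into two fixed-measure elements of $N$ and multiplying so that the compensating parts cancel, and (b) removing the parity obstruction inherent in working with products of \emph{two} transpositions. I expect step (b) to be the main obstacle, since it is precisely where non-atomicity of $\mu$ is indispensable: it lets one halve supports freely and absorb the ``sign'' using an auxiliary set of matching measure, and it is the point at which Fathi's argument for the simplicity of $\Aut(X,\mu)$ must be transported, via Proposition \ref{prop:transsamemeas} and the divisibility of the measure, to the general ergodic full group.
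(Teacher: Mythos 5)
Your proposal follows essentially the same route as the paper, which gives no argument of its own but simply cites Fathi and observes that his proof of the simplicity of $\Aut(X,\mu)$ transfers to any ergodic full group via Proposition \ref{prop:transsamemeas} --- exactly the commutator-plus-conjugacy-class scheme you outline (with Ryzhikov's generation by involutions, which the paper also invokes elsewhere, supplying your fact (i)). The bootstrapping step you leave open is precisely the part of Fathi's argument being implicitly transported, and the tools you name (Proposition \ref{prop:transsamemeas} and non-atomicity of $\mu$) are the right ones to carry it out.
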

Proposition \ref{prop:transsamemeas} can also be used to show the following (see \cite[Prop. 3.1]{MR2583950}).

\begin{prop}\label{prop:weaklydenseerg}
  A full group is weakly dense in $\Aut(X,\mu)$ if and only if it is ergodic. 
\end{prop}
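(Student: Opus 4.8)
The plan is to prove both implications, the reverse one (ergodic $\implies$ weakly dense) being the substantial part. For the forward implication I would argue by contraposition: if $\mathbb G$ is not ergodic, pick $A\in\MAlg(X,\mu)$ with $0<\mu(A)<1$ and $T(A)=A$ in the measure algebra for every $T\in\mathbb G$. Since $T\mapsto T(A)$ is continuous from the weak topology into $\MAlg(X,\mu)$, the stabilizer $\{T\in\Aut(X,\mu): T(A)=A\}$ is weakly closed; it contains $\mathbb G$, hence also its weak closure. As this stabilizer is a proper subgroup (there are automorphisms moving $A$ off itself, $A$ being neither null nor co-null), $\mathbb G$ cannot be weakly dense.

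For the converse I would first reduce weak density to a combinatorial matching statement. A basic weak neighborhood of $S\in\Aut(X,\mu)$ is cut out by finitely many sets $C_1,\dots,C_m$ and some $\eps>0$, so I pass to the finite partition $\{P_1,\dots,P_k\}$ generated by the $C_l$, each $C_l$ being a union of the $P_i$. It then suffices to prove the following Claim, stated for an arbitrary ergodic full group $\mathbb G$: for any two finite partitions $(A_1,\dots,A_n)$ and $(B_1,\dots,B_n)$ of $X$ with $\mu(A_i)=\mu(B_i)$ for all $i$, there is $T\in\mathbb G$ with $T(A_i)=B_i$ for every $i$. Indeed, applying the Claim with $A_i:=P_i$ and $B_i:=S(P_i)$ yields $T\in\mathbb G$ with $T(C_l)=S(C_l)$ for each $l$, so $T$ lies in the prescribed neighborhood.

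To prove the Claim I would induct on $n$, the case $n=1$ being trivial. Proposition \ref{prop:transsamemeas} supplies an involution $U\in\mathbb G$ with $U(A_n)=B_n$, so the sets $U(A_1),\dots,U(A_{n-1})$ partition $Y:=X\setminus B_n$ with $\mu(U(A_i))=\mu(B_i)$. I then rearrange only within $Y$, fixing $B_n$ pointwise, to send $U(A_i)$ to $B_i$ for $i<n$. This needs two ingredients. First, the restricted full group $\mathbb G_Y:=\{T|_Y : T\in\mathbb G,\ T(Y)=Y\}$ is again an ergodic full group on $(Y,\mu|_Y)$, so the induction hypothesis applies to it: given $C\subseteq Y$ with $0<\mu(C)<\mu(Y)$, choose equal-measure pieces $C'\subseteq C$ and $C''\subseteq Y\setminus C$ and use Proposition \ref{prop:transsamemeas} together with closure under cutting and pasting (Definition \ref{df:fullgroup}) to build an element of $\mathbb G$ supported on $C'\cup C''$ that swaps them, witnessing that $C$ is not $\mathbb G_Y$-invariant. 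Second, the induction hypothesis gives $W\in\mathbb G_Y$ with $W(U(A_i))=B_i$ for $i<n$; I extend it by the identity on $B_n$, and since the result is a cut-and-paste of an element of $\mathbb G$ with $\mathrm{id}$ along $(Y,B_n)$, it again lies in $\mathbb G$. Composing this extension with $U$ yields the desired $T$.

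The main obstacle, and the precise point where ergodicity is used, is the inductive step on the complement $Y$: one must know that restricting an ergodic full group to a positive-measure subset again gives an ergodic full group, and must do the bookkeeping so that rearranging $Y$ while fixing $B_n$ stays inside $\mathbb G$. Both hinge on the ability to produce involutions supported on a prescribed pair of equal-measure sets, which Proposition \ref{prop:transsamemeas} provides after truncating against the identity by cutting and pasting.
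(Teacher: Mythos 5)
Your proof is correct, and it uses exactly the ingredients the paper points to (the paper itself only cites \cite[Prop. 3.1]{MR2583950} and the remark that Proposition \ref{prop:transsamemeas} does the work). The forward direction via the weakly closed, proper stabilizer of an invariant set is fine, and the reduction of weak density to exact matching of finite partitions with equal measure vectors is the standard one. The only place where you diverge from the intended argument is in the proof of your Claim: the induction on $n$, with the detour through the restricted full group $\mathbb G_Y$ and the verification that it is again an ergodic full group, can be replaced by a one-step cut-and-paste. Namely, for each $i$ apply Proposition \ref{prop:transsamemeas} separately to obtain $T_i\in\mathbb G$ with $T_i(A_i)=B_i$, and define $T$ by $T_{\restriction A_i}=T_{i\restriction A_i}$; since the sets $T_i(A_i)=B_i$ form a partition of $X$, this $T$ is a measure-preserving bijection, and it lies in $\mathbb G$ because full groups are by definition stable under cutting and pasting (Definition \ref{df:fullgroup}). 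This removes the need for the bookkeeping about $\mathbb G_Y$ (whose fullness you assert but do not check, although it is easy). Your route buys nothing extra here, but it is sound; the observation that restrictions of ergodic full groups to positive-measure sets are again ergodic full groups is a genuinely useful fact elsewhere, even if it is overkill for this proposition.
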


The following result is a consequence of Property \ref{proper} ($\alpha$) and the above proposition.
\begin{cor}\label{cor:polishforweak}
  The group $\Aut(X,\mu)$ is the unique ergodic full group that is Polish for the weak topology. 
\end{cor}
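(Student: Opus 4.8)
The plan is to separate the statement into an easy existence part and the substantive uniqueness part. For existence, I would just observe that $\Aut(X,\mu)$ is itself a full group (it is the full group of the transitive equivalence relation on $X$, in the sense of Example \ref{ex:dyestrictsub}), that it is ergodic since its only invariant measurable sets are $\emptyset$ and $X$, and that it is Polish for the weak topology by the proposition of \cite{MR0111817} cited above. So $\Aut(X,\mu)$ does belong to the class under discussion, and the real content is that nothing else does.

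For uniqueness, I would take an arbitrary ergodic full group $\mathbb G\leq\Aut(X,\mu)$ which is Polish for the weak topology, and aim to show $\mathbb G=\Aut(X,\mu)$. The two ingredients pull in opposite directions and pin $\mathbb G$ down. First, ergodicity feeds into Proposition \ref{prop:weaklydenseerg} to give that $\mathbb G$ is \emph{weakly dense} in $\Aut(X,\mu)$. Second, being Polish for the weak topology means that $\mathbb G$, viewed as a subgroup of the Polish group $(\Aut(X,\mu),\text{weak})$, is Polish for the induced topology; Property \ref{proper} $(\alpha)$ then forces $\mathbb G$ to be \emph{closed} in $\Aut(X,\mu)$ for the weak topology. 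A subset that is simultaneously dense and closed in its ambient space must be the whole space, so $\mathbb G=\Aut(X,\mu)$, as desired.

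The proof is essentially a direct combination of the two quoted results, so I do not anticipate a genuine obstacle. The only point deserving care is a bookkeeping one: I must make sure that ``$\mathbb G$ is Polish for the weak topology'' is interpreted as $\mathbb G$ carrying the subspace topology inherited from $(\Aut(X,\mu),\text{weak})$, since that is exactly the hypothesis under which Property \ref{proper} $(\alpha)$ yields closedness. With that reading in place, the dense-and-closed argument closes the proof immediately.
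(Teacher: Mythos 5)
Your argument is exactly the paper's: the corollary is stated as an immediate consequence of Property \ref{proper} ($\alpha$) (Polish for the induced topology iff closed) together with Proposition \ref{prop:weaklydenseerg} (ergodic iff weakly dense), and dense plus closed forces $\mathbb G=\Aut(X,\mu)$. The proposal is correct and matches the intended proof.
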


The previous corollary may be extended to the non ergodic case (see \cite[Prop. 1.17 and Thm. D.6]{lemai2014}), which yields a complete description of the full groups which are Polish for the weak topology.

Let us now give a pathological example.

\begin{exemple}\label{ex:kolmogorovex}
  Let $G=\mathfrak S_\infty$ be the Polish group of all permutations of the natural numbers. The action of $\mathfrak S_\infty$ on $X=\{0,1\}^{\mathbb N}$ is faithful, has only countably many orbits and all the orbits but one are countable.  Kolmogorov observed, see \cite[Ex. 9]{MR1760945}, that there exists a measure $\mu$ on $X$ for which the action of $G$ is not ergodic. This implies that $[G]_D$ is not ergodic and by Proposition \ref{prop:weaklydenseerg}, $[G]_D$ is not weakly dense in $[\mathcal R_G]=\Aut(X,\mu)$. 
\end{exemple}

\subsection{Orbit full groups have a Polish group topology}

So far, the examples of Polish full groups that we have seen are full groups of countable pmp equivalence relations, which are exactly the full groups  for which  the uniform topology is Polish by Proposition \ref{prop:polishfordu}, and $\Aut(X,\mu)$, which is the unique ergodic Polish full group for the weak topology by Corollary \ref{cor:polishforweak}. Note that both are instances of orbit full groups. Our main result is that \textit{all} orbit full groups carry a natural Polish group topology.\\

Before defining this topology, we need a better description of orbit full groups. So we start with a Polish group $G$ acting in a Borel manner on a standard probability space $(X,\mu)$. 

Consider the Polish space $\LL^0(X,\mu,G)$ endowed with the topology of convergence in measure, and define $\Phi: \LL^0(X,\mu,G)\to \LL^0(X,\mu,X)$ by $$\Phi(f)(x):=f(x)\cdot x.$$ 
In what follows, we will always see the group $\Aut(X,\mu)$ as a subspace of $\LL^0(X,\mu,X)$. Put $\gtild:=\Phi\inv(\Aut(X,\mu))$. 

\begin{lem}\label{lem:phisur}
We have the equality $\Phi\left(\gtild\right)= [\mathcal R_G]$. 
\end{lem}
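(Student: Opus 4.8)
The plan is to establish the two inclusions separately. The inclusion $\Phi(\gtild)\subseteq[\mathcal R_G]$ is essentially immediate from the definitions: if $f\in\gtild=\Phi\inv(\Aut(X,\mu))$, then by definition $\Phi(f)\in\Aut(X,\mu)$, and since $\Phi(f)(x)=f(x)\cdot x\in G\cdot x$ for (a representative realizing the equality at) almost every $x$, the transformation $\Phi(f)$ sends almost every point into its own $G$-orbit. By the remark following the definition of the full group of an equivalence relation, after modifying on a null set we may assume this holds for \emph{all} $x$, so $\Phi(f)\in[\mathcal R_G]$.

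The substance of the lemma is the reverse inclusion $[\mathcal R_G]\subseteq\Phi(\gtild)$: given $T\in[\mathcal R_G]$, I must produce a measurable $f\colon X\to G$ with $\Phi(f)=T$, i.e.\ $f(x)\cdot x=T(x)$ almost everywhere. The natural object to introduce is
\[
R:=\{(x,g)\in X\times G:\ g\cdot x=T(x)\}.
\]
Because the action map $G\times X\to X$ is Borel and $T$ is Borel, the map $(x,g)\mapsto(g\cdot x,T(x))$ is Borel, and $R$ is the preimage of the diagonal, hence a Borel subset of $X\times G$. By definition of $[\mathcal R_G]$, every fiber $R_x=\{g\in G:\ g\cdot x=T(x)\}$ is nonempty, so $\mathrm{proj}_X(R)=X$ (or at least has full measure if one works only up to null sets).

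The crux is then a measurable selection. I would invoke the Jankov--von Neumann uniformization theorem: an analytic subset of a product of standard Borel spaces admits a $\sigma(\Sigma^1_1)$-measurable uniformizing function, which is in particular universally, hence $\mu$-, measurable. Applied to the Borel (a fortiori analytic) set $R$, this yields a $\mu$-measurable $f\colon\mathrm{proj}_X(R)\to G$ with $(x,f(x))\in R$, that is $f(x)\cdot x=T(x)$, on a full-measure set; extending $f$ arbitrarily on the complementary null set gives $f\in\LL^0(X,\mu,G)$ with $\Phi(f)=T$ in $\LL^0(X,\mu,X)$. Since $T\in\Aut(X,\mu)$, this shows $f\in\Phi\inv(\Aut(X,\mu))=\gtild$, and therefore $T=\Phi(f)\in\Phi(\gtild)$.

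The main obstacle is precisely this selection step: when the action is not free the fibers $R_x$ are cosets of point stabilizers rather than singletons, so choosing $f(x)$ measurably among them is not automatic, and this is exactly what uniformization supplies. One could instead first apply the Becker--Kechris theorem (Theorem~\ref{thm:borelvscont}) to replace the Borel action by a continuous one for a compatible Polish topology on $X$; then each fiber $R_x$ is closed and a set-valued selection theorem of Kuratowski--Ryll-Nardzewski type applies. However, since $\LL^0(X,\mu,G)$ already allows Lebesgue-measurable maps, the Jankov--von Neumann theorem applies directly to $R$ with no need to alter the topology, so I expect this to be the cleaner route.
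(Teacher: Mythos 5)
Your proof is correct and follows essentially the same route as the paper: the forward inclusion is immediate from the definitions, and the reverse inclusion is obtained by applying the Jankov--von Neumann uniformization theorem to the (Borel, hence analytic) set $\{(x,g)\in X\times G:\ g\cdot x=T(x)\}$ to produce a Lebesgue-measurable selection $f$ with $\Phi(f)=T$. The paper uses exactly this argument, so there is nothing further to add.
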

\begin{proof}
The inclusion $\Phi\left(\gtild\right)\subseteq [\mathcal R_G]$ follows directly from the definition. For the reverse inclusion, take $T\in[\mathcal R_G]$ and consider the Borel set \[ A:=\left\lbrace (x,g)\in X\times G\ :\ gx=Tx\right\rbrace.\]

By the Jankov-von Neumann Uniformisation Theorem (see \cite[Thm. 18.1]{MR1321597}), we can find an analytic uniformisation of $A$, i.e. an analytic, hence Lebesgue-measurable, map $f:X\to G$ such that for every $x\in X$, we have $T(x)=f(x)x$. In other words, we can find $f\in \LL^0(X,\mu,G)$ such that $T= \Phi(f)$, so $[\mathcal R_G]\subseteq \Phi(\gtild)$.
\end{proof}

\begin{df}\label{df:topoofg}The \textbf{topology of convergence in measure} on an orbit full group $[\mathcal R_G]$ is the quotient topology induced by $\gtild\subseteq \LL^0(X,\mu,G)$, where we put on $\LL^0(X,\mu,G)$ the topology of convergence in measure (see Section \ref{sec:lzero}).
\end{df}

 We say that the action of $G$ is \textbf{essentially free} if there exists a full measure \linebreak$G$-invariant subset $A$ of $X$ such that for every $g\in G\setminus\{1_G\}$ and every $x\in A$, $gx\neq x$. Note that this is stronger than asking that all elements of $G\setminus\{1_G\}$ have a set of fixed points of measure zero, even when $G$ is locally compact.
 
Whenever the $G$ action is essentially free, $\Phi:\gtild\rightarrow [\mathcal R_G]$ is a bijection and the topology on the orbit full group is just the topology induced by the topology of convergence in measure on $\LL^0(X,\mu,G)$. We will give later a more precise description of the topology of convergence in measure on $[\mathcal R_G]$ when the action is non free (see Corollary \ref{cor:equivcvmeasure}). 

\begin{thm}\label{thm:polishfullgroups}
Let $G$ be a Polish group acting in a Borel manner on a standard probability space $(X,\mu)$. Then the associated orbit full group $$[\mathcal R_G]=\{T\in\Aut(X,\mu): \forall x\in X, T(x)\in G\cdot x\}$$ is a Polish group for the topology of convergence in measure. This topology is weaker than the uniform topology and refines the weak topology.

Moreover if the $G$-action is essentially free and measure preserving, then $G$ embeds into $[\mathcal R_G]$. 
\end{thm}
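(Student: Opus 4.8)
The plan is to realise $[\mathcal R_G]$ as the quotient of a Polish group by a closed normal subgroup and then invoke Property \ref{proper} ($\beta$). First, using the Becker--Kechris theorem (Theorem \ref{thm:borelvscont}), I would fix a Polish topology $\tau$ on $X$ compatible with its Borel structure for which the $G$-action is continuous. With this choice $\Phi\colon\LL^0(X,\mu,G)\to\LL^0(X,\mu,(X,\tau))$ becomes continuous: if $f_n\to f$ in measure, criterion $(c)$ of Proposition \ref{prop:equivcvmeasure} lets me pass, along any subsequence, to a further subsequence with $f_{n_k}\to f$ almost everywhere; continuity of the action then gives $f_{n_k}(x)\cdot x\to f(x)\cdot x$ a.e., so $\Phi(f_{n_k})\to\Phi(f)$ in measure, and since this holds for a subsequence of every subsequence, $\Phi$ is continuous. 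As $\Aut(X,\mu)$ is a $G_\delta$ in $\LL^0(X,\mu,(X,\tau))$, it follows that $\gtild=\Phi\inv(\Aut(X,\mu))$ is a $G_\delta$, hence Polish by Proposition \ref{prop:polishsub}. Note the auxiliary topology $\tau$ serves only this step; the final topology of Definition \ref{df:topoofg} lives on $\LL^0(X,\mu,G)$ and does not depend on it.

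The group law on $[\mathcal R_G]$ is composition, which does \emph{not} correspond to the pointwise product on $\gtild$; instead I would equip $\gtild$ with the twisted product $(f*g)(x):=f(\Phi(g)(x))\,g(x)$, for which direct (cocycle) computations show that $(\gtild,*)$ is a group with unit $1_G$ and inverse $f^{*-1}=(f\cdot\Phi(f)\inv)\inv$ (the pointwise inverse of the precomposition of $f$ by $\Phi(f)\inv$), and that $\Phi\colon(\gtild,*)\to(\Aut(X,\mu),\circ)$ is a homomorphism with image $[\mathcal R_G]$ by Lemma \ref{lem:phisur}. The main obstacle is to check that $(\gtild,*)$ is a \emph{topological} group for convergence in measure, since $*$ mixes the pointwise product with the precomposition $f\mapsto f\cdot\Phi(g)$ by the \emph{varying} transformation $\Phi(g)$. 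This is where the earlier propositions do the work: writing $f*g=(f\cdot\Phi(g))\cdot g$, if $f_n\to f$ and $g_n\to g$ then $\Phi(g_n)\to\Phi(g)$ in measure (continuity of $\Phi$), hence weakly since convergence in measure refines the weak topology on $\Aut(X,\mu)$ (Proposition \ref{prop:continuity} $(2)$); the joint continuity of the precomposition action for the weak topology (Proposition \ref{prop:continuity} $(1)$) yields $f_n\cdot\Phi(g_n)\to f\cdot\Phi(g)$, and continuity of the pointwise product (Proposition \ref{prop:polishpointwise}) closes the argument. Continuity of inversion is analogous, using additionally that $(\Aut(X,\mu),\text{weak})$ is a topological group to get $\Phi(f_n)\inv\to\Phi(f)\inv$ weakly.

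Once $(\gtild,*)$ is a Polish group, its kernel $N:=\Phi\inv(\{\mathrm{id}_X\})$ is closed (as $\Phi$ is continuous) and normal, and since the fibres of $\Phi$ are exactly the left cosets of $N$, the map $\Phi$ identifies $[\mathcal R_G]$ with $\gtild/N$ carrying precisely the quotient topology of Definition \ref{df:topoofg}; Property \ref{proper} ($\beta$) then makes it a Polish group. For the comparisons: the quotient topology refines the weak topology because $\Phi\colon\gtild\to(\Aut(X,\mu),\text{weak})$ is continuous while the quotient topology is the finest one making $\Phi$ continuous. To see it is weaker than the uniform topology I would show uniform convergence implies convergence in the quotient topology; reducing to $T_n\to\mathrm{id}$ by right-invariance of $d_u$, I set $B_n:=\{x:T_nx\neq x\}$ with $\mu(B_n)\to 0$ and choose (via the uniformisation of Lemma \ref{lem:phisur}) a measurable lift $f_n\in\gtild$ of $T_n$ equal to $1_G$ off $B_n$; then $f_n\to 1_G$ in measure, so $T_n=\Phi(f_n)\to\mathrm{id}$ in the quotient topology.

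Finally, assume the action is essentially free and measure preserving, so that $\iota\colon G\to[\mathcal R_G]$, $\iota(g)(x)=g\cdot x$, is a well-defined homomorphism; it is continuous since it is $\Phi$ composed with the embedding $g\mapsto\hat g$ of $G$ as constant functions in $\LL^0(X,\mu,G)$, and essential freeness makes it injective. To see $\iota$ is an embedding I would use that the quotient map $\Phi$ is open (a quotient map of topological groups), so from $\iota(g_n)\to\mathrm{id}$ I can lift to $h_n\in\gtild$ with $\Phi(h_n)=\iota(g_n)$ and $h_n\to 1_G$ in measure; since $\Phi(h_n)=\iota(g_n)$ means $h_n(x)\cdot x=g_n\cdot x$ almost everywhere, essential freeness forces $h_n(x)=g_n$ a.e., and convergence in measure of these \emph{constants} to $1_G$ is exactly $g_n\to 1_G$ in $G$. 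Hence $\iota$ is a topological embedding and $G\cong\iota(G)$ sits inside $[\mathcal R_G]$ as a closed subgroup by Property \ref{proper} ($\alpha$).
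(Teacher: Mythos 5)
Your proposal is correct and follows essentially the same route as the paper: a continuous model via Becker--Kechris, continuity of $\Phi$ through the a.e.-subsequence criterion, the $G_\delta$ argument making $\gtild$ Polish, the twisted product $*$ turning $\Phi$ into a homomorphism, and the quotient by $\ker\Phi$ via Property \ref{proper} ($\beta$). The only notable (and welcome) difference is that you prove the comparison with the uniform topology in the non-free case directly, by constructing explicit lifts equal to $1_G$ off the support, whereas the paper defers that case to the quotient-metric description of Proposition \ref{prop:dRG}.
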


\begin{proof}
We start by showing that the topology of convergence in measure on $[\mathcal R_G]$  is a Polish group topology. 

By Theorem \ref{thm:borelvscont}, we may and do fix a Polish topology $\tau$ on $X$ such that $G\act(X,\tau)$ is a continuous action. Now, the characterization of the convergence in measure in terms of pointwise converging subsequences (cf. (\ref{item:critertopo}) in Proposition \ref{prop:equivcvmeasure})  yields that $\Phi$ is continuous.
Then, combining Proposition \ref{prop:continuity} and Proposition \ref{prop:polishsub}, we get that  $\Aut(X,\mu)\subseteq \LL^0(X,\mu,(X,\tau))$ is a $G_\delta$ and so  $\gtild=\Phi\inv(\Aut(X,\mu))$ is also a $G_\delta$. Therefore using again Proposition \ref{prop:polishsub} we see that $\gtild$ is Polish. 

We now equip $\gtild$ with the group operation $*$ defined by $$(f*g)(x):=f(\Phi(g)(x))g(x).$$
 The inverse is given by $f\inv(x):=f(\Phi(f)\inv x)\inv$. These group operations are continuous by Proposition \ref{prop:continuity}, Proposition \ref{prop:polishpointwise} and the fact that $\Phi$ is continuous. So $(\gtild,*)$ is a Polish group and it is easy to check that the restriction $\Phi_{\restriction\gtild}$ is a group homomorphism. Hence we deduce that \[[\mathcal R_G]=\Phi(\gtild)\cong\gtild/\ker(\Phi),\] is itself a Polish group for the quotient topology by Property \ref{proper} ($\beta$). 
\begin{rmq}
The Polish group $\gtild$ can be thought of as the full group of the groupoid associated to the action.
\end{rmq}

Let us now check the that the topology of convergence in measure is intermediate between the uniform and the weak topology. Since $\Phi$ is continuous, clearly we have that the topology on the orbit full group refines the weak topology, which also yields that $[\mathcal R_G]$ is a Borel subgroup of $\Aut(X,\mu)$ by Theorem \ref{thm:luzinsuslin}.

If the action of $G$ is essentially free, then $\Phi(f_n)\to \mathrm{id}_X$ uniformly implies that $\mu(f_n^{-1}(1_G))\to 1$ and hence the topology of convergence in measure is weaker than the uniform topology. The proof for non-free actions follows the same lines, but it will be an even more direct consequence of the description of the quotient topology that we will give in Proposition \ref{prop:dRG}.

Finally, when the $G$-action is essentially free, $\Phi$ restricts to a topological isomorphism between $\widetilde{[\mathcal R_G]}$ and $[\mathcal R_G]$. The ‘‘moreover'' part of the theorem then follows from the fact that $G$ embeds into $\gtild\subseteq\LL^0(X,\mu,G)$ by identifying $G$ with the set of constant maps.
\end{proof}
\begin{rmq}
We point out that whenever the orbit full group is ergodic, the topology we have defined is the unique possible Polish group topology, as we will show in Theorem \ref{thm:atmostonepolishergo}. 
\end{rmq}

\subsubsection*{Topology for non-free actions}

Let $G$ be a Polish group acting on $(X,\mu)$ as in Theorem \ref{thm:atmostonepolishergo}.

Recall that every Polish group admits a compatible right-invariant metric (see for instance \cite[Thm. 2.1.1]{MR2455198}). The proof of the following proposition can be found in \cite[Lem. 2.2.8]{MR2455198}.

\begin{prop}Let $G$ be a Polish group, and let $d_G$ be a compatible right-invariant metric on $G$. Suppose that $H\leq G$ is a closed subgroup, then the \textbf{quotient metric} $d_{G/H}$ on $G/H$ defined by, for $gH, g'H\in G/H$,
$$d_{G/H}(gH,g'H):=\inf_{h\in H} d_G(gh,g')$$
induces the quotient topology on $G/H$. 
\end{prop}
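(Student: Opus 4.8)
The plan is to first verify that $d_{G/H}$ is a genuine metric, then to show that the quotient projection $\pi\colon G\to G/H$ is both continuous and open for the metric $d_{G/H}$, and finally to deduce the identification with the quotient topology as a formal consequence of those two facts.

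For the metric axioms, I would start by recording the symmetric reformulation
$$d_{G/H}(gH,g'H)=\inf_{h,h'\in H}d_G(gh,g'h'),$$
which holds because right-invariance gives $d_G(gh,g'h')=d_G(gh(h')\inv,g')$ and $h(h')\inv$ ranges over all of $H$. This form makes well-definedness (independence of the chosen representatives) and symmetry immediate, and non-negativity together with $d_{G/H}(gH,gH)=0$ are clear by taking $h=h'=1_G$. For the triangle inequality I would, given $\eps>0$, pick near-optimal $h_1,h_1'$ for the pair $(g,g')$ and $h_2,h_2'$ for the pair $(g',g'')$, use right-invariance to align the two $g'$-representatives (shifting by $(h_1')\inv h_2$), apply the triangle inequality of $d_G$, and let $\eps\to 0$. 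The single axiom where the hypothesis on $H$ is essential is separation: if $d_{G/H}(gH,g'H)=0$ there are $h_n\in H$ with $gh_n\to g'$, hence $h_n=g\inv(gh_n)\to g\inv g'$ by continuity of left translation, and closedness of $H$ forces $g\inv g'\in H$, i.e. $gH=g'H$.

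Next I would treat the topological comparison. Taking $h=1_G$ shows $d_{G/H}(\pi(g),\pi(g'))\leq d_G(g,g')$, so $\pi$ is $1$-Lipschitz, hence continuous for $d_{G/H}$; consequently the metric topology is contained in the quotient topology (the latter being the finest one making $\pi$ continuous). The crucial point is that $\pi$ is moreover an \emph{open} map, which I would establish through the exact identity
$$\pi\bigl(B_{d_G}(g,r)\bigr)=B_{d_{G/H}}(\pi(g),r)$$
for all $g\in G$, $r>0$. The inclusion $\subseteq$ is just the Lipschitz estimate, while for $\supseteq$: if $d_{G/H}(\pi(g'),\pi(g))<r$ then some $h\in H$ satisfies $d_G(g'h,g)<r$, so $g'h\in B_{d_G}(g,r)$ and $\pi(g')=\pi(g'h)$ lies in the image. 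Since every open subset of $G$ is a union of balls, $\pi$ sends open sets to open sets.

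Finally I would combine these. Conversely to the first step, if $U\subseteq G/H$ is quotient-open then $\pi\inv(U)$ is open in $G$, and since $\pi$ is a surjective open map, $U=\pi(\pi\inv(U))$ is open for $d_{G/H}$; hence the two topologies coincide. I expect the only real bookkeeping obstacle to be the triangle inequality, namely choosing the correct right-invariant shift so the intermediate representatives line up, together with keeping track that closedness of $H$ is exactly what makes $d_{G/H}$ separate points. The topological identification itself is a soft corollary of $\pi$ being a continuous open surjection and requires no further estimates.
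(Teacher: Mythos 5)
Your proof is correct: the symmetric reformulation of $d_{G/H}$ via right-invariance, the verification of the metric axioms (with closedness of $H$ used exactly for separation), and the ball identity $\pi\bigl(B_{d_G}(g,r)\bigr)=B_{d_{G/H}}(\pi(g),r)$ making $\pi$ a continuous open surjection together give the identification of the two topologies. The paper does not prove this proposition itself but defers to \cite[Lem.\ 2.2.8]{MR2455198}, whose argument is essentially the one you give, so there is nothing substantive to compare.
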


For $x\in X$, let $G_x:=\mathrm{Stab}_G(x)$. Then $G_x$ is a closed subgroup of $G$ by a result of Miller (see \cite[Thm. 9.17]{MR1321597}; this also follows from Theorem \ref{thm:borelvscont}). We now prove an analogous statement of the previous proposition for orbit full groups. 

\begin{prop}\label{prop:dRG}Let $G$ be a Polish group acting in a Borel manner on a standard probability space $(X,\mu)$. Let $d_G$ be a compatible bounded right-invariant metric on $G$. Then the quotient metric $d_{[\mathcal R_G]}$ induced by $\tilde d_G$ on $[\mathcal R_G]=\gtild/\ker\Phi$ is given by
$$d_{[\mathcal R_G]}(T,U)=\int_Xd_x(T(x),U(x))d\mu(x),$$
where for all $x\in X$,  we identify the $G$-orbit of $x$ to the homogenous space $G/G_x$, equipped with the quotient metric $d_x$ defined by $d_x(gG_x,g'G_x):=\inf_{h\in G_x}d_G(gh,g')$.
\end{prop}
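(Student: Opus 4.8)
The plan is to identify the quotient metric on $[\mathcal R_G]=\gtild/\ker\Phi$ explicitly by unwinding the definition of the quotient metric and using the fibrewise structure of the problem. First I would recall that by definition the quotient metric coming from $\tilde d_G$ is
\[
d_{[\mathcal R_G]}(T,U)=\inf\left\{\tilde d_G(f,g): f,g\in\gtild,\ \Phi(f)=T,\ \Phi(g)=U\right\}.
\]
Since $\tilde d_G$ is right-invariant for the group operation $*$ (this should be checked, but follows from the right-invariance of $d_G$ together with the fact that $\Aut(X,\mu)$ acts by isometries on $\LL^0(X,\mu,G)$ by precomposition, cf.\ Proposition \ref{prop:continuity}), I can fix one lift $f$ of $T$ and one lift $g$ of $U$ and write the infimum as an infimum over the cosets $f\ker\Phi$ and $g\ker\Phi$; equivalently, fixing $g$, the quotient distance equals $\inf_{k\in\ker\Phi}\tilde d_G(fk,g)$.

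The key observation is that $\ker\Phi=\{k\in\gtild: k(x)\in G_x \text{ for a.e.\ }x\}$, since $\Phi(k)=\mathrm{id}_X$ means $k(x)\cdot x=x$ a.e. Thus the infimum over $k\in\ker\Phi$ is, pointwise, an infimum over the stabiliser $G_x$. Concretely,
\[
\inf_{k\in\ker\Phi}\tilde d_G(fk,g)=\inf_{k\in\ker\Phi}\int_X d_G\big(f(x)k(x),g(x)\big)\,d\mu(x),
\]
and the heart of the argument is to pass the infimum inside the integral, so that it becomes
\[
\int_X\inf_{h\in G_x} d_G\big(f(x)h,g(x)\big)\,d\mu(x)=\int_X d_x\big(f(x)G_x,g(x)G_x\big)\,d\mu(x)=\int_X d_x\big(T(x),U(x)\big)\,d\mu(x),
\]
using the identification of the orbit $G\cdot x$ with $G/G_x$ and the definition of the quotient metric $d_x$ on each fibre. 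The forward inequality ($\geq$ when moving the $\inf$ inside) is immediate since any measurable $k$ gives $d_G(f(x)k(x),g(x))\geq \inf_{h\in G_x}d_G(f(x)h,g(x))$ pointwise.

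The main obstacle, and the step deserving the most care, is the reverse inequality: realising the pointwise infimum by a single \emph{measurable} selector $k\in\ker\Phi$ (or approximating it within $\eps$). For each $x$ the value $\inf_{h\in G_x}d_G(f(x)h,g(x))$ is attained or nearly attained by some $h=k(x)\in G_x$, but these choices must be assembled into a measurable function lying in $\gtild$. I would handle this by a measurable selection argument: the set
\[
B_\eps:=\{(x,h)\in X\times G: h\in G_x,\ d_G(f(x)h,g(x))<\inf_{h'\in G_x}d_G(f(x)h',g(x))+\eps\}
\]
is analytic (the stabiliser condition $h\cdot x=x$ is Borel, and the infimum is an analytic/measurable function of $x$), so by the Jankov--von Neumann uniformisation theorem (\cite[Thm. 18.1]{MR1321597}, already invoked in Lemma \ref{lem:phisur}) it admits an analytic, hence $\mu$-measurable, uniformising selector $x\mapsto k(x)$. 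Since $k(x)\in G_x$ a.e., we have $k\in\ker\Phi$, and it witnesses the distance up to $\eps$; letting $\eps\to0$ gives the reverse inequality and hence equality. Finally I would note that the integrand $x\mapsto d_x(T(x),U(x))$ is genuinely measurable (it equals the measurable map $x\mapsto \inf_{h\in G_x}d_G(f(x)h,g(x))$ and is independent of the chosen lifts because $d_x$ is well defined on $G/G_x$), so the right-hand integral makes sense and the proof is complete.
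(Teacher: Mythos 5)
Your proposal is correct and follows essentially the same route as the paper: identify $\ker\Phi$ with the stabiliser-valued functions, reduce to the single infimum $\inf_{k\in\ker\Phi}\tilde d_G(fk,g)$ (the paper does this by noting $f*k=fk$ pointwise for $k\in\ker\Phi$, you by right-invariance of $\tilde d_G$ --- the same observation), get one inequality pointwise, and get the other by a Jankov--von Neumann selection of an $\eps$-almost-minimising stabiliser element, with measurability of the integrand handled by analyticity. No substantive difference.
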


Before proving the proposition, we state the following corollary. Its proof is analogous to the proof of Proposition \ref{prop:equivcvmeasure}, hence we omit it.

\begin{cor}\label{cor:equivcvmeasure}Let $(T_n)$ be a sequence of elements of an orbit full group $[\mathcal R_G]$, and let $T\in[\mathcal R_G]$. Then the following are equivalent:
\begin{enumerate}[(a)]
\item $T_n\to T$ in measure,
\item \label{item:criterneighbfg}for all $\eps>0$, $\mu(\{x\in X: d_x(T(x),T_n(x))>\eps\})\to 0$,
\item \label{item:critertopofg}every subsequence of $(T_n)_{n\in\N}$ admits a subsequence $(T_{n_k})_{k\in\N}$ such that for almost all $x\in X$ we have $T_{n_k}(x)\to T(x)$, where the convergence holds in the orbit of $x$, identified to the homogeneous space $G/G_x$.
\end{enumerate}
\end{cor}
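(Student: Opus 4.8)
The plan is to realize $[\RR_G]$ as the metric quotient $\gtild/\ker\Phi$ and apply the preceding proposition on quotient metrics, then to unwind the resulting infimum into an integral by a measurable selection argument. First I would record that $\ker\Phi$ is a closed (normal) subgroup of $(\gtild,*)$, being the kernel of the continuous homomorphism $\Phi_{\restriction\gtild}$, so that $\gtild/\ker\Phi\cong[\RR_G]$; the preceding proposition then applies \emph{provided} $\tilde d_G$ is right-invariant for $*$. I would check this directly: for $h\in\gtild$,
\[
\tilde d_G(f*h,g*h)=\int_X d_G\bigl(f(\Phi(h)(x))h(x),\,g(\Phi(h)(x))h(x)\bigr)\,d\mu(x)=\int_X d_G\bigl(f(\Phi(h)(x)),\,g(\Phi(h)(x))\bigr)\,d\mu(x),
\]
where right-invariance of $d_G$ cancels the common factor $h(x)$; since $\Phi(h)\in\Aut(X,\mu)$ is measure preserving, the change of variables $y=\Phi(h)(x)$ turns the last integral into $\tilde d_G(f,g)$. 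The preceding proposition then yields that $d_{[\RR_G]}(f\ker\Phi,g\ker\Phi)=\inf_{h\in\ker\Phi}\tilde d_G(f*h,g)$ induces the topology of convergence in measure.

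Next I would simplify this infimum. Writing $T=\Phi(f)$ and $U=\Phi(g)$, every $h\in\ker\Phi$ satisfies $\Phi(h)=\mathrm{id}_X$, hence $h(x)\in G_x$ for a.e.\ $x$ and $(f*h)(x)=f(x)h(x)$, so
\[
d_{[\RR_G]}(T,U)=\inf_{h\in\ker\Phi}\int_X d_G\bigl(f(x)h(x),g(x)\bigr)\,d\mu(x).
\]
On the other hand, under the identification of the orbit of $x$ with $G/G_x$ one has $d_x(T(x),U(x))=m(x)$, where
\[
m(x):=\inf_{k\in G_x} d_G\bigl(f(x)k,g(x)\bigr),\qquad \int_X d_x(T(x),U(x))\,d\mu(x)=\int_X m(x)\,d\mu(x).
\]
Thus the whole statement reduces to the interchange of infimum and integral
\[
\inf_{h\in\ker\Phi}\int_X d_G\bigl(f(x)h(x),g(x)\bigr)\,d\mu(x)=\int_X m(x)\,d\mu(x).
\]
The inequality $\geq$ is immediate: for each fixed $h\in\ker\Phi$ we have $d_G(f(x)h(x),g(x))\geq m(x)$ pointwise, since $h(x)\in G_x$; integrating and taking the infimum over $h$ gives the bound.

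The main obstacle is the reverse inequality $\leq$, which is a measurable selection problem: I must produce, for each $\eps>0$, a \emph{single} $h\in\ker\Phi$ whose integral is within $\eps$ of $\int_X m\,d\mu$. I would first argue that $m$ is measurable, by writing $\{x: m(x)<t\}$ as the projection to $X$ of the Borel set $\{(x,k): k\cdot x=x,\ d_G(f(x)k,g(x))<t\}$, which is analytic and hence Lebesgue-measurable. Then, for each $n\in\N$, the set $\{(x,k): k\cdot x=x,\ d_G(f(x)k,g(x))<m(x)+\tfrac1n\}$ has non-empty sections by definition of the infimum, so the Jankov--von Neumann uniformization theorem (as used in the proof of Lemma \ref{lem:phisur}) furnishes a Lebesgue-measurable selector $h_n:X\to G$ with $h_n(x)\in G_x$ a.e. This $h_n$ automatically lies in $\ker\Phi$, and it satisfies $\int_X d_G(f(x)h_n(x),g(x))\,d\mu\leq\int_X m\,d\mu+\tfrac1n$. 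Letting $n\to\infty$ gives $\leq$.

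The points requiring care are exactly the measurability of $m$ and the verification that the selector genuinely lands in $\ker\Phi$ (which is automatic once $h_n(x)\in G_x$ a.e., since then $\Phi(h_n)=\mathrm{id}_X$); everything else is bookkeeping. I expect the right-invariance check and the Borel/analytic projection argument to be the only genuinely non-formal steps, the latter being the crux since it is what licenses pulling the pointwise minimizers out of the integral.
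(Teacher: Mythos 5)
Your argument is correct and is essentially the paper's: the integral formula you derive for the quotient metric (via right-invariance of $\tilde d_G$ under $*$, the identification of $*$-cosets of $\ker\Phi$ with pointwise cosets, and the Jankov--von Neumann selector landing in $G_x$ almost everywhere) is precisely Proposition \ref{prop:dRG}, which the paper proves in the same way and then uses to deduce this corollary. The only step left implicit in your write-up is the passage from ``the integral metric induces the topology of convergence in measure'' to the three-way equivalence (a)$\Leftrightarrow$(b)$\Leftrightarrow$(c), which still needs the boundedness of $d_G$ together with the standard Chebyshev and Borel--Cantelli arguments of Proposition \ref{prop:equivcvmeasure} --- exactly the routine step the paper also omits.
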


\begin{proof}[Proof of Proposition \ref{prop:dRG}]
Let $K$ be the subgroup of $\LL^0(X,\mu,G)$ consisting of all $f: X\to G$ such that for all $x\in X$, $f(x)\in G_x$. It is clear that $K$ is the kernel of the restriction of $\Phi$ to $(\widetilde{[\mathcal R_G]},*)$. Moreover, for all $f\in\widetilde{[\mathcal R_G]}$ and $g\in K$,

$$(f*g)(x)= f(\Phi(g)(x))g(x)=f(x)g(x).$$
So two elements of $\gtild$ are in the same right $K$-coset for the group operation $*$ in $\widetilde{[\mathcal R_G]}$  if and only if they are in the same right $K$-coset with respect to the pointwise multiplication. This implies that the quotient metric induced by $\tilde d_G$ on  $\widetilde{[\mathcal R_G]}/K$ and the quotient metric induced by $\tilde d_G$ on $\LL^0(X,\mu,G)/K$ agree on $[\mathcal R_G]$. 

The latter metric comes from the pseudo-metric $\rho$ on $\LL^0(X,\mu,G)$ defined by $\rho(g,g'):=\inf_{k\in K}\tilde d_G(gk,g')$. In order to establish the proposition, we need to show that
$$\rho(g,g')=\int_Xd_x(g(x),g'(x))d\mu(x).$$
Note that the integral is well-defined because the function $(x,g,g')\mapsto d_x(g,g')$ is analytic, hence Lebesgue-measurable.  Fix $g,g'\in \LL^0(X,\mu,G)$. For every $\epsilon>0$, we apply the Jankov-von Neumann Uniformisation Theorem (see \cite[Thm. 18.1]{MR1321597}) to the set $$\{(x,h)\in X\times G: d_G(g(x)h,g'(x)<d_x(g(x),g'(x))+\eps\text{ and }h\cdot x=x\}$$ and we get a function $k\in K$ such that for all $x\in X$, 
$$d_G\left(g(x)k(x),g'(x)\right)<d_x(g(x),g'(x))+\eps.$$
This implies that $\rho(g,g')\leq \int_Xd_x(g(x),g'(x))d\mu(x)$. The reverse inequality is a direct consequence of the fact that for all  $k\in K$ and all $x\in X$, \[d_G(g(x)k(x),g'(x))\geq d_x(g(x),g'(x)).\qedhere\]
\end{proof}

\subsection{Full groups of measurable equivalence relations}

In this section, we study full groups of measurable equivalence relations, which will lead us to a simple criterion for distinguishing the orbit full groups of the previous section from $\Aut(X,\mu)$ (see Corollary \ref{cor:noteqaut}). 

\begin{prop}Let $(X,\mu)$ be a standard probability space, and $A$ be a Borel subset of $X\times X$. Then the set 
$$[A]:=\{T\in\Aut(X,\mu): \forall x\in X, (x,T(x))\in A\}$$
is a Borel subset of $\Aut(X,\mu)$. 
\end{prop}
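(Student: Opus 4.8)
**The plan is to show that $[A]$ is obtained by pulling back a Borel set under the embedding $\Aut(X,\mu) \hookrightarrow \mathrm{L}^0(X,\mu,(X,\tau))$.**

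The starting point is to realize that the condition ``$(x,T(x)) \in A$ for all $x \in X$'' is, up to measure zero, equivalent to the condition ``$(x,T(x)) \in A$ for almost every $x \in X$'', by the same argument given in the remark after the definition of full groups of equivalence relations (replace $T$ by its modification $T'$ on the invariant full-measure set $B := \bigcap_{n \in \Z} T^n(A_x)$, where $A_x$ is the full-measure set on which the condition holds). Thus it suffices to prove that the set
$$[A]' := \{T \in \Aut(X,\mu) : \mu(\{x \in X : (x,T(x)) \in A\}) = 1\}$$
is Borel, since $[A]$ and $[A]'$ agree as subsets of $\Aut(X,\mu)$ (recall transformations are identified up to measure zero).

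First I would fix, via Theorem \ref{thm:borelvscont} applied to the $\Aut(X,\mu)$-action or simply by choosing a Polish topology $\tau$ compatible with the Borel structure, a realization of $\Aut(X,\mu)$ as a $G_\delta$ subset of $\mathrm{L}^0(X,\mu,(X,\tau))$ via Proposition \ref{prop:continuity}. On this larger Polish space the evaluation-type map is well-behaved: the function
$$\Psi : \mathrm{L}^0(X,\mu,(X,\tau)) \to \R, \qquad \Psi(f) := \mu(\{x \in X : (x,f(x)) \in A\})$$
should be shown to be Borel-measurable. To see this, I would approximate: the map $f \mapsto \mathbf{1}_A(x,f(x))$ is a Borel function of $f$ for fixed $x$ (composition of the Borel map $f \mapsto (x,f(x))$ with the indicator of the Borel set $A$), and integrating against $\mu$ preserves Borel measurability by a Fubini/monotone-class argument on the jointly measurable map $(f,x) \mapsto \mathbf{1}_A(x,f(x))$. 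Then $[A]' = \Aut(X,\mu) \cap \Psi^{-1}(\{1\})$ is the intersection of a Borel subset of $\Aut(X,\mu)$ with a Borel set, hence Borel in $\Aut(X,\mu)$ for its own Polish (weak) topology, since the embedding into $\mathrm{L}^0$ is a homeomorphism onto its image.

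The main obstacle I anticipate is the measurability of $\Psi$, specifically establishing joint Borel measurability of $(f,x) \mapsto \mathbf{1}_A(x,f(x))$ on $\mathrm{L}^0(X,\mu,(X,\tau)) \times X$ with enough care that Fubini applies to produce a Borel function of $f$ alone. The subtlety is that elements of $\mathrm{L}^0$ are equivalence classes modulo null sets, so one must check that $\Psi$ is well-defined (independent of the representative, which is immediate since changing $f$ on a null set does not change the measure) and that the evaluation $(f,x) \mapsto f(x)$ admits a jointly measurable choice of representatives. This can be handled by working with the dense countable family of finitely-valued $\mathcal A$-measurable functions and passing to limits via Proposition \ref{prop:equivcvmeasure}, or by invoking standard results on the joint measurability of the evaluation map on spaces of measurable functions; once that technical point is secured, the rest of the argument is routine.
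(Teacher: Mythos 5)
Your overall architecture is the same as the paper's: view $\Aut(X,\mu)$ inside $\LL^0(X,\mu,(X,\tau))$ and reduce the proposition to the Borel measurability of the map recording where $(x,f(x))$ lands in $A$. The problem is that this key step is exactly where your argument stops being a proof. Your first proposed fix, approximating by the dense family of finitely-valued $\mathcal A$-measurable functions and ``passing to limits via Proposition \ref{prop:equivcvmeasure},'' cannot work for a general Borel $A$: the indicator $\mathbf{1}_A(x,\cdot)$ is not continuous, so a.e.\ convergence $f_{n_k}(x)\to f(x)$ gives no information about $\mathbf{1}_A(x,f_{n_k}(x))$. Concretely, for $X=[0,1]$ and $A=X\times(\Q\cap[0,1])$ one has $\Psi(f)=\mu(f^{-1}(\Q))$, and both $\Psi^{-1}(\{1\})$ and $\Psi^{-1}(\{0\})$ contain dense subsets of $\LL^0(X,\mu,X)$; no limiting procedure from a dense family can recover $\Psi$, which is nowhere continuous here. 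Your second proposed fix --- a jointly measurable realization of the evaluation $(f,x)\mapsto f(x)$ followed by Fubini --- is sound in principle, but such a realization is itself a nontrivial lemma (as you note, $f\mapsto f(x)$ is not even well defined for fixed $x$), and invoking unspecified ``standard results'' leaves the heart of the proposition unestablished.

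The paper closes this gap without ever evaluating $f$ at points, by an induction on the Borel complexity of $A$ rather than an approximation argument on $f$. Realize $X$ as a Cantor set and consider the $\MAlg(X,\mu)$-valued map $\Phi_A(f):=\{x\in X:(x,f(x))\in A\}$, which is well defined on classes since changing $f$ on a null set changes this set by a null set. For \emph{clopen} $A$ the map $\Phi_A$ is genuinely continuous --- this is the only place where a.e.\ convergence of subsequences is used --- and the family of those $A$ for which $\Phi_A$ is Borel is closed under complements and countable unions, hence contains every Borel set. Two points are worth retaining: first, one must track the set $\Phi_A(f)\in\MAlg(X,\mu)$ and not merely its measure $\Psi(f)$, because the countable-union step rests on the identity $\Phi_{\cup_nA_n}(f)=\cup_n\Phi_{A_n}(f)$, which is invisible at the level of measures; second, this monotone class argument is the missing idea in your proposal, and once it is in place the joint measurability of the evaluation map is never needed.
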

\begin{proof}
We may suppose that $(X,d)$ is a Cantor set. By Proposition \ref{prop:continuity}, the weak topology on $\Aut(X,\mu)$ is the same as the topology of convergence in measure induced by $\LL^0(X,\mu,(X,d))$. We will show that given a Borel subset $A$ of $X\times X$, the function $\Phi_A:\LL^0(X,\mu,(X,d))\to \MAlg(X,\mu)$ defined by $$\Phi_A(f):=\{x\in X: (x,f(x))\in A\}$$
is Borel.  Let $\mathcal F$ be the class of subsets of $X\times X$ for which $\Phi_A$ is Borel. Because $X\times X$ is again a Cantor set, we only need to show that $\mathcal F$ is a $\sigma$-algebra containing the clopen sets.

So suppose that $A$ is a clopen subset of $X\times X$. We will actually show that $\Phi_A$ is continuous. 

Fix $f\in\LL^0(X,\mu,(X,d))$ and $\eps>0$. Since $A$ and $X^2\setminus A$ are open, there exists $\delta>0$ smaller than $\eps/2$ and a set $B$ of measure less than $\eps/2$ such that for all $x\in X\setminus B$ and $y\in X$ such that $\d(y,f(x))\leq \delta$, we have 
$$(x,f(x))\in A\Leftrightarrow (x,y)\in A.$$
It is then clear that for every $g$ in the open neighborhood $U$ of $f$ given by 
$$\mathcal U_\delta(f)=\left\lbrace g\in\LL^0(X,\mu,X): \mu\left\lbrace x\in X: \d(f(x),g(x))>\delta\rbrace\right)<\delta\right\rbrace,$$
we have $\d_\mu(\Phi_A(f),\Phi_A(g))<\eps$. 

So the class $\mathcal F$ contains the clopen sets, and we now have to show that it is a $\sigma$-algebra. First,  $\mathcal F$ is stable under complementation, since given $T\in\Aut(X,\mu)$, we have that $\Phi_{X^2\setminus A}(T)=X\setminus\Phi_A(T)$. Moreover if $(A_n)$ is a countable family of elements of $\mathcal F$, then 
$$\Phi_{\cup_n A_n}(T)=\bigcup_{n\in\N}\Phi_{A_n}(T),$$
and since taking a countable union is a Borel operation on $\MAlg(X,\mu)$, we get that $\bigcup_{n\in\N}A_n$ belongs to $\mathcal F$.
\end{proof}

\begin{cor}
Let $\mathcal R$ be a Borel equivalence relation on a standard probability space $(X,\mu)$. Then its full group
$$[\mathcal R]=\{T\in\Aut(X,\mu): \forall x\in X, (x,T(x))\in\mathcal R\}$$
is a Borel group for the Borel structure induced by the weak topology on $\Aut(X,\mu)$. 
\end{cor}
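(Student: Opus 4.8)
The plan is to derive this directly from the preceding proposition, since a Borel equivalence relation is in particular a Borel subset of $X\times X$. Concretely, I would apply that proposition with $A=\mathcal R$ to conclude immediately that $[\mathcal R]=[A]$ is a Borel subset of $\Aut(X,\mu)$. This disposes of the measurability, which is the only genuinely substantial ingredient, and it has already been established in the proposition; note moreover that the definition of $[A]$ there uses exactly the same ``for all $x\in X$'' quantifier as the definition of $[\mathcal R]$, so the two sets literally coincide.

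It then remains to check the two purely formal points that $[\mathcal R]$ is a subgroup and that the induced Borel structure makes it a Borel group. For the subgroup property I would invoke the three defining axioms of an equivalence relation. Reflexivity gives $(x,\mathrm{id}(x))=(x,x)\in\mathcal R$ for all $x$, so $\mathrm{id}_X\in[\mathcal R]$. For closure under composition, if $T,S\in[\mathcal R]$ then for every $x$ we have $(x,S(x))\in\mathcal R$ and $(S(x),T(S(x)))\in\mathcal R$, whence transitivity yields $(x,TS(x))\in\mathcal R$. For inverses, given $T\in[\mathcal R]$ and $x\in X$, writing $y=T^{-1}(x)$ we have $(y,T(y))=(y,x)\in\mathcal R$, so symmetry gives $(x,T^{-1}(x))\in\mathcal R$. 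Thus $[\mathcal R]$ is a subgroup of $\Aut(X,\mu)$.

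Finally, since $\Aut(X,\mu)$ is a Polish group for the weak topology, its multiplication and inversion are continuous and hence Borel; restricting these operations to the Borel subgroup $[\mathcal R]$ equipped with the induced Borel structure produces a Borel group, as claimed. I do not anticipate any real obstacle here: the content lies entirely in the previous proposition, and the remaining verifications are the standard reflexivity/symmetry/transitivity bookkeeping. The only point deserving a word of care is the pointwise ``for all $x$'' quantifier, but since the equivalence-relation axioms hold on all of $X$ this causes no difficulty.
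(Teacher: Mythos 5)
Your proof is correct and matches the paper, which states this corollary without further argument as an immediate consequence of the preceding proposition applied to the Borel set $A=\mathcal R\subseteq X\times X$. The subgroup verification via reflexivity, symmetry and transitivity, and the observation that the group operations of the Polish group $\Aut(X,\mu)$ restrict to Borel operations on the Borel subgroup $[\mathcal R]$, are exactly the routine checks the authors leave implicit.
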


In the previous section, we saw that the orbit full group of a Borel action of Polish group is also Borel. Note however that there are actions of Polish groups inducing analytic, non Borel equivalence relations, so there are non Borel analytic equivalence relations whose full group is Borel. Using Theorem \ref{thm:borelfullgroups}, one can show that there also are analytic equivalence relations whose full group is not Borel.

Let us now give a general criterion which allows us to distinguish full groups of measurable equivalence relations and $\Aut(X,\mu)$.

\begin{prop}\label{prop:eqautxmu}
Let $\mathcal R$ be a Lebesgue-measurable equivalence relation on a standard probability space. Then the following are equivalent:
\begin{enumerate}[(i)]
\item$[\mathcal R]=\Aut(X,\mu)$,
\item $\mathcal R$ has full measure in $X\times X$,
\item $\mathcal R$ has an equivalence class of full measure.
\end{enumerate}
\end{prop}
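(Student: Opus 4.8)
The plan is to prove the cycle $(iii)\Rightarrow(i)\Rightarrow(ii)$ together with the easy equivalence $(ii)\Leftrightarrow(iii)$, organising everything around the $\mathcal R$-invariant function $f(x):=\mu([x]_{\mathcal R})$ recording the measure of the class of $x$. By Fubini (the sections of the Lebesgue-measurable set $\mathcal R$ are measurable for a.e.\ $x$), the function $f$ is measurable and $(\mu\times\mu)(\mathcal R)=\int_X f\,d\mu$. Since $f\le 1$, condition $(ii)$, namely $\int_X f\,d\mu=1$, is equivalent to $f=1$ almost everywhere. If almost every class has full measure, then any two such classes intersect in a full-measure set and hence coincide, so there is a single class of full measure; this gives $(ii)\Leftrightarrow(iii)$.

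For $(iii)\Rightarrow(i)$, suppose $C$ is a class with $\mu(C)=1$. Then for any $T\in\Aut(X,\mu)$ both $x\in C$ and $T(x)\in C$ hold for almost every $x$ (as $T$ preserves $\mu$), so $(x,T(x))\in\mathcal R$ a.e.; by the remark following the definition of $[\mathcal R]$ we may adjust $T$ on a null set, whence $T\in[\mathcal R]$ and $[\mathcal R]=\Aut(X,\mu)$.

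The substantive direction is $(i)\Rightarrow(ii)$, which I would establish in its contrapositive: assuming $(\mu\times\mu)(\mathcal R)=c<1$, I must produce a single $T\in\Aut(X,\mu)\setminus[\mathcal R]$. The naive idea of iterating one fixed transformation fails, since $\mathcal R$ could contain an entire orbit. Instead I average over a transitive family of measure-preserving maps whose graph measures reconstruct $\mu\times\mu$ exactly. Concretely, identify $(X,\mu)$ with the circle group $\mathbb R/\mathbb Z$ equipped with Lebesgue ($=$ Haar) measure, and for $\theta\in\mathbb R/\mathbb Z$ set $R_\theta(x):=x+\theta$. The change of variables $(x,\theta)\mapsto(x,x+\theta)$ preserves Lebesgue measure on the torus, so Fubini yields the exact identity
$$\int_{\mathbb R/\mathbb Z}\mu\{x:(x,R_\theta x)\in\mathcal R\}\,d\theta=(\mu\times\mu)(\mathcal R)=c<1.$$
As the integrand is bounded by $1$ and has average $c<1$, it is strictly below $1$ on a positive-measure set of $\theta$; for any such $\theta$ one gets $\mu\{x:(x,R_\theta x)\notin\mathcal R\}>0$, i.e.\ $R_\theta\notin[\mathcal R]$. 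This proves $\neg(ii)\Rightarrow\neg(i)$ and closes the cycle.

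The main obstacle is precisely this construction in $(i)\Rightarrow(ii)$: one must avoid any argument resting on weak-$*$ convergence of graph measures, which is powerless against a merely measurable, possibly orbit-containing $\mathcal R$, and instead exploit an \emph{exact} Fubini identity coming from the transitivity and measure-invariance of the rotation family. The remaining technical care concerns only the measurability of $f$ and of $\theta\mapsto\mu\{x:(x,R_\theta x)\in\mathcal R\}$, both of which follow from Fubini applied after the relevant measure-preserving changes of variables.
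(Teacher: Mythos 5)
Your proof is correct and rests on the same key idea as the paper's: identify $(X,\mu)$ with a compact group (the circle) carrying Haar measure and apply Fubini to the set of pairs $(\theta,x)$ with $(x,R_\theta x)\in\mathcal R$, whose measure equals $(\mu\times\mu)(\mathcal R)$. The paper runs this forwards (all translations lie in $[\mathcal R]$, so Fubini produces a point whose class has full measure, giving $(i)\Rightarrow(iii)$) while you run it in contrapositive to get $(i)\Rightarrow(ii)$, but this is only a cosmetic reorganisation of the same argument.
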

\begin{proof}
Both implications $(iii)\impl(i)$ and $(iii)\impl(ii)$ are obvious. 

The implication $(ii)\impl (iii)$ is also easy: suppose that $\mathcal R$ has full measure, then by Fubini's Theorem for almost all $x\in X$, the $\mathcal R$-equivalence class of $x$ has full measure, so that in particular there is one equivalence class of full measure. 

For the remaining implication $(i)\impl(iii)$, assume that $[\mathcal R]=\Aut(X,\mu)$. We may suppose that $X$ is a compact group equipped with the Haar measure. By assumption, for all $z\in X$ the left multiplication by $z$ is in the full group $[\mathcal R]$,  which means that for all $z\in X$ and almost $x\in X$, $(zx,x)\in\mathcal R$. Again by Fubini's theorem, we deduce that there exists $x\in X$ such that for almost all $z\in X$, $(zx,x)\in\mathcal R$, hence $(iii)$ holds.
\end{proof}

\begin{rmq}
By the same arguments used in the previous proof, one can show that the full group generate by the circle group acting on itself by left translation $[\mathbb S^1]_D$ cannot be the full group of any Lebesgue-measurable equivalence relation. Indeed, such an equivalence relation would have to be transitive by the previous proof, so that $[\mathbb S^1]_D$ would be equal to $\Aut(X,\mu)$, contradicting Example \ref{ex:dyestrictsub}.
\end{rmq}

\begin{cor}\label{cor:noteqaut}Suppose a Polish group $G$ acts essentially freely on $(X,\mu)$, and that $[\mathcal R_G]=\Aut(X,\mu)$. Then $G$ is compact.
\end{cor}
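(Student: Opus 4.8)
The plan is to reduce the hypothesis $[\RR_G]=\Aut(X,\mu)$ to the statement that the action has a single orbit of full measure, then to transport $\mu$ to $G$ and obtain a left-invariant probability measure, and finally to show that a Polish group carrying such a measure must be compact.

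First I would observe that $\RR_G$ is the image of the Borel map $(g,x)\mapsto(x,g\cdot x)$, hence analytic and therefore Lebesgue-measurable. So Proposition \ref{prop:eqautxmu} applies: from $[\RR_G]=\Aut(X,\mu)$ I get that $\RR_G$ has an equivalence class of full measure, i.e. a single $G$-orbit $\mathcal O$ with $\mu(\mathcal O)=1$. Using essential freeness, fix a full-measure $G$-invariant set $A$ on which the action is free; since $A\cap\mathcal O$ has full measure I may pick $x_1\in A\cap\mathcal O$, and then $\mathcal O=G\cdot x_1\subseteq A$ while the stabilizer of $x_1$ is trivial. Hence the orbit map $\pi\colon G\to\mathcal O$, $g\mapsto g\cdot x_1$, is a Borel bijection onto the full-measure set $\mathcal O$, whose inverse is Borel by the Luzin–Suslin Theorem \ref{thm:luzinsuslin}. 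I then transport $\mu$ to $G$ by setting $\nu:=(\pi\inv)_*\mu$, a Borel probability measure. Since $\pi$ is equivariant, $\pi(gB)=g\cdot\pi(B)$ for every Borel $B\subseteq G$, and because the action preserves $\mu$ this gives
\[\nu(gB)=\mu(g\cdot\pi(B))=\mu(\pi(B))=\nu(B),\]
so $\nu$ is left-invariant. (This is the step that uses that the action is measure preserving; the conclusion is false for merely Borel actions, as the translation action of $\R$ on a Gaussian space shows, so measure-preservation must be part of the hypotheses.)

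The heart of the argument is the claim that a Polish group $G$ carrying a left-invariant Borel probability measure $\nu$ is compact. First, $\nu$ gives positive mass to every neighbourhood $V$ of $1_G$: otherwise, covering $G$ by countably many left-translates of $V$ (possible as $G$ is Lindelöf) and using left-invariance would force $\nu(G)=0$. I would then prove $G$ precompact, i.e. that for every neighbourhood $U$ of $1_G$ there is a finite $F$ with $G=FU$. Choosing a symmetric open $V$ with $VV\subseteq U$, I run a greedy selection, picking $g_1,g_2,\dots$ with $g_{n+1}\notin\bigcup_{i\leq n}g_iU$ as long as possible. If this never terminated, the translates $g_iV$ would be pairwise disjoint, since an overlap $g_iV\cap g_jV\neq\emptyset$ with $i<j$ yields $g_j\in g_iVV^{-1}=g_iVV\subseteq g_iU$, contradicting the choice of $g_j$; and each would satisfy $\nu(g_iV)=\nu(V)>0$ by left-invariance, contradicting $\nu(G)=1$. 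Hence the process stops and $G=FU$. Since $G$ is Polish it is complete in its two-sided (Raikov) uniformity, and a precompact group that is Raikov-complete is compact; therefore $G$ is compact.

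The only genuinely non-routine ingredient is this last claim that a left-invariant probability measure forces compactness; everything else is bookkeeping around the orbit map. Within that claim the greedy disjointness argument is elementary, and the single external fact I rely on is that Polish groups are Raikov-complete, so that precompactness upgrades to compactness. I expect the main obstacle to be making sure precompactness (in the one-sided sense $G=FU$) really does combine with two-sided completeness to give compactness, rather than only left-Cauchy completeness, which for general Polish groups is not automatic.
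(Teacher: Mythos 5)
Your argument is correct and follows the same overall route as the paper: apply Proposition \ref{prop:eqautxmu} to obtain a $G$-orbit of full measure (the relation $\mathcal R_G$ is analytic, hence Lebesgue-measurable, as you note), use essential freeness to identify that orbit Borel-bijectively with $G$, and push $\mu$ forward to a left-invariant Borel probability measure on $G$. The one point of divergence is the final step: the paper simply invokes Ulam's theorem (Thm. B.1 of \cite{MR2191233}) to conclude that a Polish group carrying an invariant Borel probability measure is compact, whereas you reprove this from scratch. Your proof of that step is sound, and the worry you raise at the end does resolve: one-sided total boundedness $G=FU$ implies the two-sided version (given $U$, write $G=FV$ with $V=U^{-1}$ and take inverses to get $G=UF^{-1}$; a set totally bounded for two uniformities is totally bounded for their join), and Polish groups are Raikov-complete, so total boundedness upgrades to compactness. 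What this buys is a self-contained argument in place of an external citation; what it costs is the extra care about uniformities that the citation hides. Finally, your parenthetical remark is a genuine catch: the corollary as stated omits the measure-preservation hypothesis, which is used exactly where you say it is (left-invariance of $\nu$), and without it the statement fails, e.g.\ for $\R$ acting on itself by translation with a Gaussian measure.
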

\begin{proof}
If $[\mathcal R_G]=\Aut(X,\mu)$, then by the previous proposition there is a $G$-orbit of full measure. The freeness of the $G$-action allow us to identify such an orbit to $G$, which then carries a left-invariant Borel probability measure, hence $G$ is compact by Ulam's Theorem (see \cite[Thm. B.1]{MR2191233}). 
\end{proof}

\begin{rmq}If we moreover assume that $G$ is uncountable, $[\mathcal R_G]$ cannot be the full group of a countable pmp equivalence relation, because $G\leq[\mathcal R_G]$ is discrete for the uniform topology whenever the action is essentially free. So Theorem \ref{thm:polishfullgroups} yields a large class of new examples of Polish full groups whose Polish topology is neither the weak nor the uniform topology. 
\end{rmq}

\subsection{Orbit equivalence and full groups}

Let $\mathcal R$ and $\mathcal R'$ be two equivalence relations on a standard probability spaces $(X,\mu)$. We say that $\mathcal R$ and $\mathcal R'$ are \textbf{orbit equivalent} if there exist a full measure subset $A\subseteq X$ and a Borel measure preserving bijection $S: A\to A$ such that for all $(x,y)\in A^2$, $(x,y)\in\mathcal R$ if and only if $(S(x),S(y))\in \mathcal R'$. We also say that such an $S$ is an \textbf{orbit equivalence} between $\mathcal R$ and $\mathcal R'$. It is easy to see that if $S$ is an orbit equivalence, then it conjugates the full groups $[\mathcal R]$ and $[\mathcal R']$, that is, we have the relation $S[\mathcal R]S\inv=[\mathcal R']$. In the case of orbit full groups, one can say a bit more.

\begin{lem}Let $G$ and $H$ be two Polish groups acting in a Borel  manner on $(X,\mu)$. Let $S\in\Aut(X,\mu)$ is an orbit equivalence between $\mathcal R_{G}$ and $\mathcal R_{H}$. Then the conjugation by $S$ is a group homeomorphism between the orbit full groups $[\mathcal R_{G}]$ and $[\mathcal R_{H}]$. 
\end{lem}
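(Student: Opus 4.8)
The plan is to avoid describing explicitly how the orbit equivalence $S$ distorts the quotient metrics $d_x$ on the orbits, and to establish continuity abstractly through automatic continuity (Property \ref{proper} ($\gamma$)). Since $S$ is an orbit equivalence we already know that $S[\mathcal R_G]S\inv=[\mathcal R_H]$, so the conjugation map $c_S\colon T\mapsto STS\inv$ is a group isomorphism from $[\mathcal R_G]$ onto $[\mathcal R_H]$. It therefore suffices to prove that $c_S$ is a Borel map between the Polish groups $([\mathcal R_G],\text{cvm})$ and $([\mathcal R_H],\text{cvm})$, where $\text{cvm}$ denotes the topology of convergence in measure: Property \ref{proper} ($\gamma$), applied to this bijective Borel (hence analytic) homomorphism, will then upgrade it into a topological isomorphism, which is exactly the assertion.

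The key point is that the topology of convergence in measure and the weak topology induce the \emph{same} Borel structure on an orbit full group. Indeed, by Theorem \ref{thm:polishfullgroups} the convergence in measure topology refines the weak topology, so the inclusion $\iota\colon([\mathcal R_G],\text{cvm})\into(\Aut(X,\mu),\text{weak})$ is a continuous injection between Polish spaces. By the Luzin--Suslin theorem (Theorem \ref{thm:luzinsuslin}) $\iota$ sends Borel sets to Borel sets, while continuity makes preimages of Borel sets Borel; hence a subset of $[\mathcal R_G]$ is Borel for the convergence in measure topology if and only if it is the trace of a Borel subset of $\Aut(X,\mu)$. The same description holds for $[\mathcal R_H]$.

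Now conjugation by $S$ is a homeomorphism of $(\Aut(X,\mu),\text{weak})$, because the weak topology makes $\Aut(X,\mu)$ a topological group, and by the conjugation property recalled above it carries $[\mathcal R_G]$ onto $[\mathcal R_H]$. Consequently $c_S$ is Borel when both full groups are equipped with the Borel structure induced from $(\Aut(X,\mu),\text{weak})$, and by the previous paragraph this coincides with the Borel structure of the convergence in measure topologies. Thus $c_S$ is a bijective Borel group homomorphism between two Polish groups, and Property \ref{proper} ($\gamma$) (with trivial kernel) shows it is a topological isomorphism; in particular its inverse $c_S\inv=c_{S\inv}$, coming from the orbit equivalence $S\inv$ between $\mathcal R_H$ and $\mathcal R_G$, is automatically continuous as well.

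I expect the main obstacle to be precisely the temptation to argue directly: one would like to transfer the pointwise-in-the-orbit convergence of Corollary \ref{cor:equivcvmeasure}(c) through $S$, but since $S$ is merely a measure-preserving Borel bijection there is no reason for it to relate the quotient metrics $d_x$ of $G/G_x$ to those $d_{S(x)}$ of $H/H_{S(x)}$; and because the convergence in measure topology is strictly finer than the weak topology, the weak-continuity of conjugation does not by itself suffice. Routing the argument through the coincidence of Borel structures and automatic continuity is what sidesteps this difficulty.
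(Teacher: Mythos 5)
Your argument is correct and is essentially the paper's own proof: the authors likewise observe that the orbit full groups are Borel subgroups of $\Aut(X,\mu)$ (so that conjugation by $S$, a weak-topology homeomorphism, is a Borel isomorphism between them) and then invoke Property \ref{proper} ($\gamma$) to upgrade it to a homeomorphism. Your write-up merely makes explicit the identification of Borel structures via Luzin--Suslin that the paper leaves implicit.
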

\begin{proof}
By Theorem \ref{thm:polishfullgroups}, $[\mathcal R_{G}]$ and $[\mathcal R_{H}]$ are Borel subgroups of $\Aut(X,\mu)$ and the conjugation by $S$ induces a Borel isomorphism between $[\mathcal R_{G}]$ and $[\mathcal R_{H}]$. This isomorphism is a homeomorphism by Property \ref{proper} ($\gamma$).
\end{proof}

A fancier restatement of the previous lemma is that orbit full groups, seen as topological groups, are invariants of orbit equivalence. Now, we show that for ergodic measure preserving actions of locally compact groups, the orbit full groups are complete invariants. 

\begin{thm}\label{thm:reconstlc}Let $G$ and $H$ be two Polish locally compact groups acting in a Borel measure preserving ergodic manner on a standard probability space $(X,\mu)$.  Suppose that $\psi: [\mathcal R_G]\to [\mathcal R_H]$ is an abstract group isomorphism. Then there exists an orbit equivalence $S$ between $\mathcal R_G$ and $\mathcal R_H$ such that for all $T\in[\mathcal R_G]$, 
$$ \psi(T)=S\inv TS.$$
\end{thm}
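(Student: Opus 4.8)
The plan is to first reduce, via Dye's Reconstruction Theorem (Theorem \ref{thm:dyerec}), to a statement about a single pair of orbit equivalence relations, and then to recover the relation $\mathcal R_G$ from the abstract group $[\mathcal R_G]$ using that the acting group sits inside its own full group. Since $[\mathcal R_G]$ and $[\mathcal R_H]$ are ergodic full groups, Dye's theorem applies and produces $S\in\Aut(X,\mu)$ with $\psi(T)=S^{-1}TS$ for all $T\in[\mathcal R_G]$; in particular $S^{-1}[\mathcal R_G]S=[\mathcal R_H]$. Writing $\mathcal R_G'=S^{-1}\mathcal R_G S$, which is the orbit relation of the conjugated (still Borel, measure preserving, ergodic) $G$-action $g\cdot' z:=S^{-1}(g\cdot Sz)$, I am reduced to the case $S=\mathrm{id}$: it suffices to show that if two such actions satisfy $[\mathcal R_G]=[\mathcal R_H]$ as subgroups of $\Aut(X,\mu)$, then there is a conull set $A$ with $\mathcal R_G\cap A^2=\mathcal R_H\cap A^2$. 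Unwinding the conjugation, such an $A$ exhibits $S$ as an orbit equivalence between the original relations, which is exactly the conclusion.

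For the reduced statement I would exploit that the actions are measure preserving, so that $G$ and $H$ embed into $\Aut(X,\mu)$ with images inside the common full group $[\mathcal R_G]=[\mathcal R_H]$. Thus for every fixed $g\in G$ the transformation $z\mapsto g\cdot z$ lies in $[\mathcal R_H]$, whence $g\cdot x\in H\cdot x$ for $\mu$-almost every $x$; symmetrically for each $h\in H$. Fixing a left Haar measure $\lambda$ on $G$ and applying Fubini to the analytic (hence measurable) set $\{(g,x):(x,g\cdot x)\in\mathcal R_H\}$, I obtain that for $\mu$-almost every $x$ the set $\Omega_x:=\{g\in G:g\cdot x\in H\cdot x\}$ is $\lambda$-conull, together with the symmetric statement exchanging the roles of $G$ and $H$. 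Equivalently, $\mathcal R_G$ and $\mathcal R_H$ coincide off a set which is null for the natural $\sigma$-finite measures carried by both relations (the pushforwards of the product measures $\lambda\times\mu$).

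The remaining and genuinely delicate point is to upgrade this coincidence off a null set to a coincidence on a conull set, i.e. to show that $G\cdot x\subseteq H\cdot x$ (and symmetrically) for almost every $x$, rather than merely for $\lambda$-almost every group element. This is exactly where local compactness is indispensable, and it is the main obstacle: a $\lambda$-conull $\Omega_x$ need not be all of $G$, and the exceptional, $\lambda$-null set of $g$ with $g\cdot x\notin H\cdot x$ is invisible to the full group, since full-group elements are defined only up to measure zero. I would attack it by exhausting $G=\bigcup_n K_n$ by compact sets and fixing a Polish topology on $X$ making both actions continuous (Theorem \ref{thm:borelvscont}). For fixed $n$ and a good $x$, Luzin's theorem gives a compact $K\subseteq K_n$, with $\lambda(K_n\setminus K)$ small, on which the selection $g\mapsto h(g)\in H$ satisfying $g\cdot x=h(g)\cdot x$ (unique in the essentially free case) is continuous, so that $h(K)$ is relatively compact in $H$; continuity of the actions then forces the limit values $g_0\cdot x$ to stay inside $H\cdot x$. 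Since this still only controls a $\lambda$-conull set of $g$ and orbits of non-smooth actions need not be closed, I would finish by passing to a countable complete section $Y$ of $\mathcal R_G$: the induced countable pmp equivalence relation is a genuine countable union of graphs of full-group elements, so the equality $[\mathcal R_G]=[\mathcal R_H]$ transfers to equality of the restricted relations on a conull invariant subset by the classical reconstruction of countable relations from their full groups, and re-suspending yields the desired conull set $A$ on $X$.

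With $A$ in hand one has $\mathcal R_G\cap A^2=\mathcal R_H\cap A^2$, so the identity is an orbit equivalence between the conjugated relation $\mathcal R_G'$ and $\mathcal R_H$; conjugating back, $S$ is an orbit equivalence between $\mathcal R_G$ and $\mathcal R_H$ implementing $\psi$, as required. I expect essentially all of the difficulty to be concentrated in the third paragraph: the reduction and the measured coincidence are soft, whereas controlling the $\lambda$-null set of exceptional group elements—equivalently, recovering the orbit relation \emph{exactly} and not merely up to a null set—is the crux, and is precisely what forces the local-compactness hypothesis.
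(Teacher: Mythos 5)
Your first two paragraphs reproduce the paper's reduction exactly: Dye's Reconstruction Theorem gives the conjugating $S$, and the problem becomes showing that $[\mathcal R_G]\subseteq[\mathcal R_H]$ forces $\mathcal R_G\cap(X_0\times X_0)\subseteq\mathcal R_H$ for some conull $X_0$ (the paper isolates this as Proposition \ref{prop:oeincl} and applies it twice); the Fubini step producing a conull $X_0$ on which $\Omega_x=\{g\in G: g\cdot x\in H\cdot x\}$ is Haar-conull is also identical. The problem is your third paragraph, which you yourself flag as the crux: as written it is a plan, not a proof, and neither of its two strategies closes the gap. The Luzin argument only controls $g_0$ lying in the compact set $K$ on which the selection is continuous, and every such $g_0$ already belongs to $\Omega_x$; a Haar-null exceptional set can be dense, so it is not reached as a limit from within $K$, and the selection $g\mapsto h(g)$ is not well defined without a freeness hypothesis that the theorem does not make. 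The cross-section finish is also not available at this level of generality: a countable complete section of $\mathcal R_G$ is a null set for non-discrete $G$, so one cannot ``restrict'' elements of the full group (which are only defined up to measure zero) to it, the existence of such sections is itself a nontrivial theorem not established in this paper, and one would additionally need a single section adapted to both relations.

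The missing idea is much softer and uses only that $\mathcal R_H$ is an equivalence relation. Take $x\in X_0$ and $g_1\in G$ with $g_1x\in X_0$; the goal is $g_1x\in H\cdot x$. Since $x\in X_0$, the set $A=\{g: gx\in Hx\}$ is conull; since $g_1x\in X_0$, the set $\Omega_{g_1x}$ is conull, hence so is its right translate $B=\{g: gx\in Hg_1x\}=\Omega_{g_1x}\,g_1$ (right translation preserves Haar null sets). Any $g\in A\cap B$ witnesses $gx\in Hx\cap Hg_1x$, so the two $H$-orbits meet and therefore coincide, giving $g_1x\in Hx$. This yields $\mathcal R_G\cap(X_0\times X_0)\subseteq\mathcal R_H$ with no continuity, no selection, and no cross section: one never needs to enlarge $\Omega_x$ to all of $G$, only to find a \emph{single} group element in the intersection of two conull sets. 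Local compactness enters solely through the existence of the Haar measure used in the Fubini argument.
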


Dye's Reconstruction Theorem plays a fundamental role in our proof.

\begin{thm}[{\cite[Thm. 2]{MR0158048}}]\label{thm:dyerec}
Suppose $\mathbb G_1$ and $\mathbb G_2$ are two ergodic full groups on a standard probability space $(X,\mu)$. Then for every abstract group isomorphism $\psi: \mathbb G_1\to \mathbb G_2$, there exists $S\in\Aut(X,\mu)$ such that for all $T\in \mathbb G_1$, we have $\psi(T)=STS\inv.$
\end{thm}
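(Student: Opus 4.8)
The plan is to reconstruct the measure algebra $\MAlg(X,\mu)$ together with its $\mathbb G_i$-action directly from the abstract group structure of each $\mathbb G_i$, so that $\psi$ transports one onto the other and descends to an automorphism of $\MAlg(X,\mu)$; von Neumann's theorem that measure-preserving automorphisms of the measure algebra are spatially implemented then produces $S$. To this end I attach to each $A\in\MAlg(X,\mu)$ the subgroup
$$\mathbb G[A]:=\{T\in\mathbb G:\ \supp(T)\subseteq A\},\qquad \supp(T)=\{x:\ Tx\neq x\},$$
of elements supported in $A$. The assignment $A\mapsto\mathbb G[A]$ is injective and reflects inclusion onto the family $\mathcal L=\{\mathbb G[A]:A\in\MAlg(X,\mu)\}$: if $\mu(A\setminus B)>0$, then applying Proposition \ref{prop:transsamemeas} to two halves of $A\setminus B$ yields an involution supported in $A\setminus B\subseteq A$ but not in $B$, so $\mathbb G[A]\neq\mathbb G[B]$ and $A\subseteq B\ssi\mathbb G[A]\leq\mathbb G[B]$.

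The core of the argument is to describe $\mathcal L$ and the $\mathbb G$-action on it purely in terms of the group law, so that $\psi$ must carry $\mathcal L_1$ onto $\mathcal L_2$. Two identities drive this. First, disjointness of supports is exactly commutation: $\mathbb G[A]$ and $\mathbb G[B]$ commute elementwise if and only if $A\cap B=\emptyset$. The forward direction is clear; for the converse one uses that, when $\mu(A\cap B)>0$, the restriction $\mathbb G[A\cap B]$ is again an ergodic full group on $(A\cap B,\mu)$ (as one checks with Proposition \ref{prop:transsamemeas}), hence weakly dense in $\Aut(A\cap B,\mu)$ by Proposition \ref{prop:weaklydenseerg} and therefore nonabelian, so it already contains non-commuting elements lying in $\mathbb G[A]\cap\mathbb G[B]$. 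Second, complementation is read off from centralizers: since $\mathbb G[A]$ is weakly dense in $\Aut(A,\mu)$ and the centralizer of a weakly dense subgroup is the (trivial) centre, any $g\in\mathbb G$ commuting with all of $\mathbb G[A]$ must fix $A$ pointwise, whence $C_{\mathbb G}(\mathbb G[A])=\mathbb G[X\setminus A]$. Together with $\mathbb G[A]\cap\mathbb G[B]=\mathbb G[A\cap B]$, these identities realise the Boolean algebra $\MAlg(X,\mu)$ inside the subgroup lattice of $\mathbb G$, so an abstract isomorphism induces an order, hence lattice, hence Boolean automorphism $\sigma$ of $\MAlg(X,\mu)$ determined by $\psi(\mathbb G_1[A])=\mathbb G_2[\sigma A]$.

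It remains to make $\sigma$ measure preserving and to recover $\psi$ itself. Equimeasurability is group-theoretic: $\mu(A)=\mu(B)$ if and only if $\mathbb G[A]$ and $\mathbb G[B]$ are conjugate in $\mathbb G$, one direction being Proposition \ref{prop:transsamemeas} and the other following from $T\mathbb G[A]T\inv=\mathbb G[TA]$ together with the measure-preservation of $T$. Since $\psi$ preserves conjugacy, $\sigma$ preserves equimeasurability, so $A\mapsto\mu(\sigma A)$ factors as $f(\mu(A))$ for a monotone additive $f:[0,1]\to[0,1]$ with $f(1)=1$ (additivity coming from the atomlessness of $\mu$ and the fact that $\sigma$ preserves disjoint joins); such an $f$ is the identity, so $\mu(\sigma A)=\mu(A)$. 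By von Neumann's theorem $\sigma$ is induced by some $S\in\Aut(X,\mu)$, i.e. $\sigma(A)=S(A)$. Finally, applying $\psi$ to $T\mathbb G_1[A]T\inv=\mathbb G_1[TA]$ and using $\psi(\mathbb G_1[A])=\mathbb G_2[SA]$ gives $\mathbb G_2[\psi(T)(SA)]=\mathbb G_2[S(TA)]$ for every $A$; injectivity of $B\mapsto\mathbb G_2[B]$ yields $\psi(T)\,S=S\,T$ on $\MAlg(X,\mu)$, and since the action on the measure algebra is faithful this is exactly $\psi(T)=STS\inv$.

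The genuine difficulty — Dye's combinatorial heart — is the middle step: turning the informal recovery of $\mathcal L$ into a bona fide group-theoretic predicate that singles out the subgroups $\mathbb G[A]$ among all subgroups of $\mathbb G$ and is manifestly preserved by any abstract isomorphism. The commutation and centralizer identities above supply the Boolean operations \emph{once} one knows that $\psi$ permutes $\mathcal L$; but proving that $\psi$ does permute $\mathcal L$ — rather than merely sending, say, centralizer-closed subgroups to centralizer-closed subgroups, of which there are others — is where ergodicity and the abundance of involutions furnished by Proposition \ref{prop:transsamemeas} must be exploited in full. This is the step I expect to demand the most care.
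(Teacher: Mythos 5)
The paper does not actually prove Theorem \ref{thm:dyerec}; it quotes it from Dye \cite{MR0158048}, so your proposal has to be measured against Dye's original argument. Your overall architecture is exactly the classical one: encode $\MAlg(X,\mu)$ into the subgroup lattice via $A\mapsto\mathbb G[A]$, observe that intersection, complement (via centralizers), disjointness (via commutation) and equimeasurability (via conjugacy, using Proposition \ref{prop:transsamemeas}) are algebraically visible, transport by $\psi$, apply von Neumann's point-realization theorem to get $S$, and conclude from $T\,\mathbb G[A]\,T\inv=\mathbb G[TA]$. Those peripheral verifications are essentially correct, up to two small patches: Proposition \ref{prop:transsamemeas} only provides an involution with $T(A)=B$, so to get one \emph{supported} in $A\setminus B$ (as you use for injectivity) you must first cut and paste $T$, $T\inv$ and the identity; and to see that $g\in C_{\mathbb G}(\mathbb G[A])$ fixes $A$ pointwise you should first check that $g$ preserves $A$, e.g.\ by conjugating an involution $u$ supported in a positive-measure $D\subseteq A$ with $g(D)\cap D=\emptyset$, since only after that does your weak-density/trivial-centre argument apply to $g_{\restriction A}$.

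There is, however, a genuine gap, and you have located it yourself in your last paragraph: nothing in the proposal shows that $\psi$ carries the family $\mathcal L_1=\{\mathbb G_1[A]\}$ onto $\mathcal L_2$, i.e.\ that the subgroups $\mathbb G[A]$ admit a purely group-theoretic characterization among all subgroups of $\mathbb G$. Your commutation and centralizer identities are properties \emph{of} the members of $\mathcal L$, not a predicate that singles them out, and the most tempting candidate provably fails: every $\mathbb G[A]$ is centralizer-closed, since $C_{\mathbb G}(\mathbb G[A])=\mathbb G[X\setminus A]$ gives $C_{\mathbb G}(C_{\mathbb G}(\mathbb G[A]))=\mathbb G[A]$, but so is $C_{\mathbb G}(T)$ for any single $T\in\mathbb G$ (as $C(C(C(T)))=C(T)$ always), and for $T\neq\mathrm{id}_X$ one has $C_{\mathbb G}(T)\notin\mathcal L$. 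Indeed, if $C_{\mathbb G}(T)=\mathbb G[A]$, then $T\in C_{\mathbb G}(T)$ forces $\supp T\subseteq A$, while $\mathbb G[X\setminus\supp T]\subseteq C_{\mathbb G}(T)=\mathbb G[A]$ forces $X\setminus\supp T\subseteq A$ by your own order-reflection, whence $A=X$ and $T$ is central, hence trivial because the centre of an ergodic full group is trivial (Proposition \ref{prop:weaklydenseerg} plus weak continuity of conjugation). So ``centralizer-closed'' admits strictly more subgroups than $\mathcal L$, and you offer no replacement predicate. Closing this is precisely the combinatorial heart of Dye's proof: he carries out a careful algebraic analysis of involutions, showing that comparison and disjointness of their supports are definable from the group law alone, and reconstructs the measure algebra from classes of involutions \emph{before} any of the steps you perform become available. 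As it stands your automorphism $\sigma$ is undefined, so the proposal reduces the theorem to its hardest step rather than proving it.
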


The theorem also uses the following proposition, whose proof is inspired by Proposition $B.2$ of \cite{MR776417}. 
  
\begin{prop}\label{prop:oeincl}
 Let $G$ and $H$ be two Polish locally compact groups acting in a Borel measure preserving manner on a standard probability space $(X,\mu)$, and suppose that $[\mathcal R_G]\subseteq [\mathcal R_H]$. Then there exists a full measure set $X_0\subseteq X$ such that $$\mathcal R_G\cap \left(X_0\times X_0\right)\subseteq \mathcal R_H.$$ 
\end{prop}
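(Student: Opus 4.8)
The plan is to exploit the elementary but crucial observation that every single group element already lies in the full group. Indeed, for each $g\in G$ the translation $T_g\colon x\mapsto g\cdot x$ is a measure-preserving Borel bijection of $(X,\mu)$ satisfying $T_g(x)\in G\cdot x$ for all $x$, so $T_g\in[\mathcal R_G]$. By hypothesis $T_g\in[\mathcal R_H]$, which means that for each fixed $g$ we have $g\cdot x\in H\cdot x$ for $\mu$-almost every $x$. The whole difficulty is that the exceptional null set depends on $g$, whereas we want a single full-measure set valid for all $g$ simultaneously; worse, we must control the entire uncountable orbit $G\cdot x$, not merely countably many translates, so a naive argument through a countable dense subset of $G$ only governs a countable subrelation of $\mathcal R_G$.

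To get around this I would introduce $D:=\{(x,g)\in X\times G:\ g\cdot x\notin H\cdot x\}$. Since the $H$-action is Borel, $\mathcal R_H$ is analytic, and pulling it back along the Borel map $(x,g)\mapsto(x,g\cdot x)$ shows that the complement of $D$ is analytic; hence $D$ is coanalytic and therefore universally measurable. Fixing a left Haar measure $\lambda$ on the locally compact (second countable, so $\sigma$-finite) group $G$, the previous paragraph says that every vertical slice $D^g=\{x:(x,g)\in D\}$ is $\mu$-null. Fubini's theorem (applied to the completion of $\mu\times\lambda$) then yields that for $\mu$-almost every $x$ the horizontal slice $\{g:\ g\cdot x\notin H\cdot x\}$ is $\lambda$-null; equivalently, the set $S_x:=\{g\in G:\ g\cdot x\in H\cdot x\}$ is co-null in $G$. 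I let $X_0$ be the set of such $x$, which is measurable (the map $x\mapsto\lambda(D_x)$ is measurable by Tonelli) and of full measure, and I claim it is the desired set.

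It then remains to verify $\mathcal R_G\cap(X_0\times X_0)\subseteq\mathcal R_H$. So take $x,y\in X_0$ with $y=g_0\cdot x$ for some $g_0\in G$, and suppose for contradiction that $y\notin H\cdot x$, i.e. the $H$-classes $H\cdot x$ and $H\cdot y$ are disjoint. Since $x\in X_0$, the set $S_x$ is co-null. Since $y\in X_0$, the set $S_y=\{g:\ g\cdot y\in H\cdot y\}$ is co-null; writing $g\cdot y=(gg_0)\cdot x$ and using that right translation by $g_0$ preserves $\lambda$-null sets (via the modular function), I get that $\{g':\ g'\cdot x\in H\cdot y\}=S_y\,g_0$ is co-null as well. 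Two co-null subsets of $G$ must meet, since their common complement is $\lambda$-null while $\lambda(G)>0$; hence there is $g^*$ with simultaneously $g^*\cdot x\in H\cdot x$ and $g^*\cdot x\in H\cdot y$, contradicting $H\cdot x\cap H\cdot y=\emptyset$. Therefore $y\in H\cdot x$, which is exactly $(x,y)\in\mathcal R_H$.

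The main obstacle is precisely the uniformity issue flagged above: converting the family of statements ``for each $g$, for almost every $x$'' into something controlling a fixed orbit. The Fubini step performs the key swap into ``for almost every $x$, for almost every $g$'', and the translation-invariance argument in the last paragraph is what upgrades the purely measure-theoretic ``almost every $g$'' back into the genuine orbit containment $G\cdot x\cap X_0\subseteq H\cdot x$. I would emphasise that local compactness of $G$ enters only through the existence of a Haar measure enabling the Fubini and translation arguments, while $H$ is used merely as a Polish group whose orbit relation $\mathcal R_H$ is an analytic equivalence relation; neither freeness nor ergodicity of the actions is required.
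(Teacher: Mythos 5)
Your proposal is correct and follows essentially the same route as the paper's proof: deduce from $T_g\in[\mathcal R_H]$ that for each $g$ one has $gx\in Hx$ almost surely, swap quantifiers via Fubini with respect to Haar measure to define $X_0$, and then use that two co-null subsets of $G$ (one of them a right translate, harmless for the Haar null ideal) must intersect to show two $H$-orbits meeting $X_0$ in the same $G$-orbit coincide. Your additional care about the measurability of the set $D$ (coanalytic, hence universally measurable, so Fubini applies) is a point the paper passes over silently.
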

\begin{proof}
Let $\nu$ be the Haar measure on $G$. The fact that $[\mathcal R_G]\subseteq[\mathcal R_H]$ implies that for all $g\in G$ and almost all $x\in X$, we have $g x\in Hx$. By Fubini's Theorem, this implies that the set 
$$X_0:=\{x\in X: \text{ for }\nu\text{-almost all }g\in G,\text{ we have } gx\in Hx\}$$
has full measure. Now let $x\in X_0$, and let $g_1\in G$ be such that $g_1x\in X_0$. We want to show that $g_1x\in Hx$. 

Since $x$ and $g_1x$ are in $X_0$, the sets $A:=\{g\in G:gx\in Hx\}$ and $B:=\{g\in G:gx\in Hg_1x\}$ have full measure and so $Ag_1\inv \cap B$ has full measure. Take $g\in Ag_1\inv\cap B$, then $gg_1x\in Hx\cap Hg_1x$, so the two orbits $Hx$ and $Hg_1x$ intersect, hence $g_1x\in Hx$.
\end{proof}

\begin{proof}[Proof of Theorem \ref{thm:reconstlc}]
By Dye's Reconstruction Theorem, $\psi$ is the conjugation by some $S\in\Aut(X,\mu)$ and by the previous proposition applied two times, such an $S$ has to be an orbit equivalence. 
\end{proof}

\begin{qu}Can the previous theorem be extended to (some) non locally compact Polish groups?
\end{qu}

\section{Topological properties of full groups}

\subsection{Aperiodic elements and free actions}

Now we study some topological properties of orbit full groups. 
So let us fix a Polish group $G$ acting on a Borel manner on the standard probability space $(X,\mu)$. We will denote by $\mathbb G=[\mathcal R_G]$ the associated orbit full group. Recall that its Polish topology is weaker than the uniform topology and refines the weak topology. 
%Our aim is to generalize a result of Törnquist which says that given a pmp free action of a countable group $\Gamma$, there is a dense $G_\delta$ of $T\in\Aut(X,\mu)$ which induce a free action of $\Gamma*\Z$.

We will first prove that orbit full groups are contractible as a corollary of the continuity of the first return map.

\begin{df}
Let $T\in\Aut(X,\mu)$ and $A\in\MAlg(X,\mu)$, Poincaré's Recurrence Theorem states that for almost every $x\in A$ there is a smaller $n_x\in\N$ such that $T^{n_x}(x)\in A$. The \textbf{first return map} $T_A$ is then defined by $T_A(x)=x$ for all $x\not\in A$, and by $T_A(x)=T^{n_x}(x)$ for all $x\in A$.
\end{df}

Proposition \ref{prop:cofr} and Corollary \ref{cor:ofgcontra} are  straightforward generalizations of Keane's results for $\Aut(X,\mu)$ \cite{MR0265555}. We will however give a detailed proof to show how the compatible metric on orbit full groups defined in Proposition \ref{prop:dRG} can be used.

\begin{prop}\label{prop:cofr}Let $\mathbb G$ be an orbit full group. Then the function which maps $(T,A)\in\mathbb G\times \MAlg(X,\mu)$ to the first return map  $T_A\in\mathbb G$ is continuous.
\end{prop}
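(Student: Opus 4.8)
The goal is to show that the first return map construction is continuous in both variables jointly, where $\mathbb G = [\mathcal R_G]$ carries the topology of convergence in measure. The natural strategy is to use the explicit compatible metric $d_{[\mathcal R_G]}$ from Proposition \ref{prop:dRG}, together with the sequential characterization of convergence in measure from Corollary \ref{cor:equivcvmeasure}. Since the topology is metrizable, I would work sequentially: fix $(T_n,A_n)\to(T,A)$ in $\mathbb G\times\MAlg(X,\mu)$ and aim to show $(T_n)_{A_n}\to T_A$ in measure, using the criterion in part (c) of Corollary \ref{cor:equivcvmeasure} (convergence a.e.\ along subsequences, in each orbit $G/G_x$).

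\begin{proof}[Proof sketch]
The plan is to reduce to a.e.\ pointwise statements and feed them into the subsequence criterion of Corollary~\ref{cor:equivcvmeasure}. First I would pass to a subsequence along which $T_n\to T$ in measure and $\mu(A_n\triangle A)\to 0$, and, using Corollary~\ref{cor:equivcvmeasure}(c) together with the Borel--Cantelli lemma applied to $\mu(A_n\triangle A)$, extract a further subsequence for which simultaneously $T_n(x)\to T(x)$ in $G/G_x$ for a.e.\ $x$ and $\mathbf 1_{A_n}(x)\to\mathbf 1_A(x)$ for a.e.\ $x$. Fix such a good point $x$. The key observation is that the first return time $n_x$ under $T$ is a.e.\ finite by Poincaré recurrence, and for a \emph{fixed} finite $n_x$, the iterates $T_n^{k}(x)$ for $k\le n_x$ converge to $T^{k}(x)$ (this uses that the group operation on $\gtild$ is continuous, equivalently that composition is continuous for convergence in measure, Proposition~\ref{prop:continuity}(1)). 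Hence for large $n$ the orbit segment $x,T_n(x),\dots,T_n^{n_x}(x)$ tracks $x,T(x),\dots,T^{n_x}(x)$, and the indicator convergence $\mathbf 1_{A_n}\to\mathbf 1_A$ along these points forces the return index for $T_n$ into $A_n$ to equal $n_x$ eventually, so that $(T_n)_{A_n}(x)\to T_A(x)$ in the orbit of $x$.

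The main obstacle is that return times are not continuous as functions of $x$: the set where the first return time equals a given value can be complicated, and a priori $n_x$ could be large on a non-negligible set, so the ``fixed finite $n_x$'' argument must be made uniform enough to survive integration. The way to handle this is to bound the measure of the bad set. Given $\eps>0$, choose $N$ so large that the set $E_N:=\{x\in A: n_x>N\}$ has $\mu(E_N)<\eps$; on the complement the return time is bounded by $N$, so only finitely many iterates are involved and the argument above applies uniformly. One then shows that for $n$ large, $\mu(\{x: d_x((T_n)_{A_n}(x),T_A(x))>\eps\})<C\eps$ for some absolute constant, by splitting $X$ into $E_N$, a small set where the indicators have not yet stabilized, and the good set. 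Combining with part (b) of Corollary~\ref{cor:equivcvmeasure} gives convergence in measure.

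Since every subsequence of $(T_n,A_n)$ thus admits a further subsequence along which $(T_n)_{A_n}\to T_A$ in measure, the full sequence converges by the standard subsequence principle, establishing joint sequential continuity and hence, by metrizability, continuity of the first return map.
\end{proof}
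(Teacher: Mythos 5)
Your overall architecture --- truncate the return time at a level $N$ chosen so that $\mu(\{x\in A:\ n_x>N\})<\eps$, handle the finitely many remaining levels, and then integrate --- is the same as the paper's, which works with the level sets $B_m(T,A)=\{x\in X:\ T_A(x)=T^m(x)\}$ and the explicit metric $d_{[\mathcal R_G]}$ of Proposition \ref{prop:dRG}. But there is a genuine gap in your pointwise step, namely the claim that ``the indicator convergence $\mathbf 1_{A_n}\to\mathbf 1_A$ along these points forces the return index for $T_n$ into $A_n$ to equal $n_x$ eventually.'' The a.e.\ convergence $\mathbf 1_{A_n}(y)\to\mathbf 1_A(y)$ is a statement at \emph{fixed} points $y$, whereas you need to evaluate $\mathbf 1_{A_n}$ at the \emph{moving} points $T_n^k(x)$. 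Knowing $T_n^k(x)\to T^k(x)$ in the orbit of $x$ does not give $\mathbf 1_{A_n}(T_n^k(x))\to \mathbf 1_A(T^k(x))$: indicator functions of measurable sets are not continuous, so even in the special case $A_n=A$ for all $n$ the points $T_n^k(x)$ may approach $T^k(x)\in A$ from the complement of $A$ (or conversely), and nothing in your setup rules out that this happens on a set of $x$ of positive measure. Consequently the return index need not stabilize at $n_x$ for a.e.\ good $x$, and the integration step that follows is not justified.

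The correct substitute is measure-theoretic rather than pointwise: the set of $x$ where $T_n^k(x)\in A_n$ but $T^k(x)\notin A$ (or vice versa) has measure $\mu\bigl(T_n^{-k}(A_n)\bigtriangleup T^{-k}(A)\bigr)$, and this tends to $0$ because $T_n^k\to T^k$ \emph{weakly} (the topology of convergence in measure refines the weak topology by Theorem \ref{thm:polishfullgroups}) and $\mu(A_n\bigtriangleup A)\to 0$. This is precisely the observation the paper relies on, that $(T,A)\mapsto B_m(T,A)$ is continuous when $\Aut(X,\mu)$ carries the weak topology. Once you have it, you can either finish as the paper does --- bound $d_{[\mathcal R_G]}(\tilde T_{\tilde A},T_A)$ by splitting the integral over the symmetric differences $B_m(\tilde T,\tilde A)\bigtriangleup B_m(T,A)$, the intersections, and the tail $m>N$, using that $d_x$ is bounded by $1$ --- or, if you prefer your subsequence formulation, pass to a further subsequence along which $\mathbf 1_{B_m(T_n,A_n)}\to\mathbf 1_{B_m(T,A)}$ a.e.\ for every $m\leq N$ and then run your pointwise argument on the resulting good set. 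Either way, the weak-continuity of the level sets is the missing ingredient; your proof never invokes it.
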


\begin{proof}
Given $T\in\Aut(X,\mu)$, $A\in\MAlg(X,\mu)$ and $n>0$, we let 
$$C_n(T,A):=\{x\in X: T^n(x)\in A\},$$
we put $B_n(T,A):=C_n(T,A)\setminus\bigcup_{m<n}C_m(T,A)$ and $B_0(T,A):=X\setminus A$. Then $(B_n(T,A))_{n\geq 0}$ is a partition of $X$ and $T_A(x)=T^n(x)$ for all $x\in B_n(T,A)$. Note that $B_n(T,A)$ depends continuously on $(T,A)\in\Aut(X,\mu)\times \MAlg(X,\mu)$, where $\Aut(X,\mu)$ is equipped with the weak topology. 

 Now, let $\eps>0$, and fix $(\tilde T,\tilde A)\in\mathbb G\times \MAlg(X,\mu)$. Since $(B_n(\tilde T, \tilde A))_{n\geq 0}$ is a partition of $X$, we may find $N>0$ such that $\mu(X\setminus\bigcup_{m<N} B_m(\tilde T,\tilde A))<\eps$. Let $\mathcal U$ be the set of couples $(T,A)\in \mathbb G\times \MAlg(X,\mu)$ such that 
 \begin{enumerate}[(1)]
\item  $\sum_{m=0}^N d_{[\mathcal R_G]}(\tilde T^m,T^m)<  \eps$ and 
\item $\sum_{m=0}^N \mu(B_m(T,A)\Delta B_n(\tilde T,\tilde A))< \eps$.
\end{enumerate}
By continuity of $T\mapsto T^m$ and of $B_m(\cdot,\cdot)$, the set $\mathcal U$ is open. Let $(T,A)\in\mathcal U$, we now compute the distance between $\tilde T_{\tilde A}$ and $T_A$. 
\begin{align*}d_{[\mathcal R_G]}(\tilde T_{\tilde A},T_A)=&\int_Xd_x(\tilde T_{\tilde A}(x),T_A(x))d\mu(x)\\
\leq &\ \sum_{m=0}^N\int_{B_m(\tilde T,\tilde A)\Delta B_m(T,A)}d_x(\tilde T_{\tilde A}(x),T_A(x))d\mu(x)\\ +&\sum_{m=0}^n\int_{B_m(\tilde T,\tilde A)\cap B_m(T,A)}d_x(\tilde T_{\tilde A}(x),T_A(x))d\mu(x) \\ +&\int_{\bigcup_{m>N}B_m( \tilde T,\tilde A)}d_x(\tilde T_{\tilde A}(x),T_A(x))d\mu(x).\end{align*}
Since the metric $d_x$ is bounded by $1$, we then get the following inequality:
\begin{align*}
d_{[\mathcal R_G]}(\tilde T_{\tilde A},T_A)&< \sum_{m=0}^N \int_{B_m(\tilde T,\tilde A)\cap B_m(T,A)}d_x(\tilde T_{\tilde A}(x),T_A(x))d\mu(x)+2 \eps \\
&<\sum_{m=0}^N \int_{B_m(\tilde T,\tilde A)\cap B_m(T,A)}d_x(\tilde T^m(x),T^n(x))d\mu(x)+2 \eps \\&< 3 \eps.\qedhere 
\end{align*}
\end{proof}

\begin{cor}\label{cor:ofgcontra}
Orbit full groups are contractible.
\end{cor}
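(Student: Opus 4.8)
The goal is to prove that orbit full groups are contractible, and the natural plan is to mimic Keane's contraction of $\Aut(X,\mu)$, using the continuity of the first return map established in Proposition \ref{prop:cofr}. The idea is to build an explicit homotopy $H:[0,1]\times\mathbb G\to\mathbb G$ with $H(0,\cdot)=\mathrm{id}$ (the constant map to the identity of the group) and $H(1,\cdot)$ the identity map of $\mathbb G$, deforming each $T$ continuously to the identity element $\mathrm{id}_X$.

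The key geometric observation is that one can use a fixed \emph{odometer-like} structure, or more simply a decreasing family of subsets, to interpolate. Concretely, I would fix a continuous family $(A_t)_{t\in[0,1]}$ of elements of $\MAlg(X,\mu)$ with $A_0=\emptyset$ and $A_1=X$ and $\mu(A_t)=t$, which exists because the standard probability space has no atoms; for instance identify $(X,\mu)$ with $[0,1]$ and set $A_t=[0,t]$. For $T\in\mathbb G$ and $t\in[0,1]$, the candidate homotopy is to let $T$ act only on the part of space governed by $A_t$ and to be the identity elsewhere. The cleanest way to realize this in a full group is via the first return map: replace $T$ by the transformation that agrees with the induced dynamics on $A_t$ and fixes the complement. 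Define
\begin{equation}
H(t,T):=\text{the element of }\mathbb G\text{ equal to }T_{A_t}\text{ suitably restricted,}
\end{equation}
so that $H(0,T)=\mathrm{id}_X$ and $H(1,T)=T$. The continuity of $H$ in the variable $T$ (and jointly in $(t,T)$) then follows directly from Proposition \ref{prop:cofr}, which asserts that $(T,A)\mapsto T_A$ is continuous from $\mathbb G\times\MAlg(X,\mu)$ into $\mathbb G$, combined with the continuity of $t\mapsto A_t$ into $\MAlg(X,\mu)$.

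The main subtlety—and the step I expect to be the real obstacle—is arranging the endpoint conditions so that $H$ genuinely contracts $T$ to the identity element while staying inside $\mathbb G$ and remaining continuous at the endpoints. A single induced map $T_{A_t}$ does not in general interpolate between $\mathrm{id}_X$ and $T$ itself; rather one must combine the first return map on $A_t$ with the portion of $T$ acting between $A_t$ and its complement in a way that reassembles $T$ at $t=1$. The standard trick is to write, for $x$ whose orbit segment under $T$ meets $A_t$, the transformation as a cut-and-paste of powers of $T$ so that the full orbit is recovered as $t\to1$; because $\mathbb G$ is a full group it is stable under such cutting and pasting, keeping us inside $\mathbb G$. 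One must then verify continuity at $t=1$, where the induced map becomes all of $T$, using the convergence criterion of Corollary \ref{cor:equivcvmeasure}: as $\mu(A_t)\to1$ the partition $(B_m(T,A_t))_m$ stabilizes on a set of measure tending to $1$, forcing convergence in measure. Granting these verifications, $H$ is a continuous contraction and $\mathbb G$ is contractible.
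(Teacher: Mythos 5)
Your construction is exactly the paper's: identify $(X,\mu)$ with $[0,1]$ with Lebesgue measure, take the continuous path $t\mapsto A_t$ in $\MAlg(X,\mu)$ from $\emptyset$ to $X$, and set $H(t,T):=T_{A_t}$, with joint continuity supplied by Proposition \ref{prop:cofr}. The ``obstacle'' you flag in your last paragraph is not actually there: by the very definition of the first return map, $T_X=T$ (every point returns in one step) and $T_A=\mathrm{id}_X$ when $A$ is null, so $H(1,T)=T$ and $H(0,T)=\mathrm{id}_X$ on the nose --- no restriction, cut-and-paste reassembly, or separate endpoint continuity argument via Corollary \ref{cor:equivcvmeasure} is needed beyond Proposition \ref{prop:cofr} itself.
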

\begin{proof}
We may suppose that $X=[0,1]$ and that $\mu$ is the Lebesgue measure. The continuous map 
$H: [0,1]\times \mathbb G\to \mathbb G$ defined by $H(s,T):=T_{[s,1]}$ is a homotopy between the identity function $T\mapsto T$ and the constant function $T\mapsto \mathrm{id}_X$. 
\end{proof}

The \textbf{support} of $T:X\to X$, denoted by $\supp T$, is defined by $$\supp T:=\{x\in X: T(x)\neq x\}.$$ We now state the main theorem of this section. Note that condition (v) is a  generalization of \cite[The Category Lemma]{MR2210067} to the case of orbit full groups.

\begin{thm}\label{thm:aperdense}Let $\mathbb G$ be an orbit full group. Then the following are equivalent:
\begin{enumerate}[(i)]
\item the set of aperiodic elements is dense in $\mathbb G$;
\item the conjugacy class of any aperiodic element of $\mathbb G$ is dense in $\mathbb G$;
\item there exists a sequence $(T_n)$ of aperiodic elements of $\mathbb G$ such that $T_n\to \mathrm{id}_X$;
%\item For all $A\in\MAlg(X,\mu)$, there exists a sequence $(T_n)$ of involutions of $\mathbb G$ such that $T_n\to\mathrm{id_X}$ and for all $n\in\N$, $A=\supp T_n$;
\item for all $A\in\MAlg(X,\mu)$, there exists a sequence $(T_n)$ of elements of $\mathbb G$ such that $T_n\to\mathrm{id}_X$ and for all $n\in\N$, $A=\supp T_n$;
\item whenever $\Gamma\act(X,\mu)$ is a free measure-preserving action of a countable discrete group $\Gamma$,  there is a dense $G_\delta$ in $\mathbb G$ of elements inducing a free action of $\Gamma*\Z$; 
\item  for all $n\in\N$, there is a dense $G_\delta$ of $(T_1,...,T_n)$ in $\mathbb G^n$ which induce a free action of $\mathbb F_n$.
\item for all neighborhood of the identity $U$ in $G$, $\cup_{g\in U}\supp g$ has full measure.
\end{enumerate}
\end{thm}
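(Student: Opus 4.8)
The plan is to establish the equivalences through the cyclic core $(i)\Rightarrow(iii)\Rightarrow(iv)\Rightarrow(vii)\Rightarrow(iii)$, and then to attach $(ii),(v),(vi)$ by short arguments hinging on the same construction. The implication $(i)\Rightarrow(iii)$ is immediate, since a dense set of aperiodic elements in particular accumulates at $\mathrm{id}_X$. For $(iii)\Rightarrow(iv)$, given aperiodic $T_n\to\mathrm{id}_X$ and a set $A\in\MAlg(X,\mu)$, I would pass to the induced (first return) transformations $(T_n)_A$. Since $T_n$ is aperiodic, the first return time to $A$ is at least $1$, so $(T_n)_A$ moves every point of $A$ and fixes its complement, giving $\supp (T_n)_A=A$; and since $(\mathrm{id}_X)_A=\mathrm{id}_X$, the continuity of the first return map (Proposition~\ref{prop:cofr}) yields $(T_n)_A\to\mathrm{id}_X$. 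Thus the sequence $((T_n)_A)_n$ witnesses $(iv)$.

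For $(iv)\Rightarrow(vii)$ (and a fortiori $(iii)\Rightarrow(vii)$) I would argue by contraposition, extracting a lower bound on $d_{[\mathcal R_G]}$ from the explicit metric of Proposition~\ref{prop:dRG}. Suppose $(vii)$ fails: there is a neighbourhood $U$ of $1_G$ and a set $B$ with $\mu(B)>0$ such that $g\cdot x=x$ for all $x\in B$ and $g\in U$, i.e.\ $U\subseteq G_x$ for $x\in B$. Fix $\delta>0$ with $\{g\in G: d_G(g,1_G)<\delta\}\subseteq U$. Then for $x\in B$ and any $g$ with $gG_x\neq G_x$,
\[ d_x(gG_x,G_x)=\inf_{k\in G_x} d_G(gk,1_G)\geq \delta, \]
since $d_G(gk,1_G)<\delta$ would force $gk\in U\subseteq G_x$ and hence $g\in G_x$. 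Consequently $d_{[\mathcal R_G]}(T,\mathrm{id}_X)\geq \delta\,\mu(B\cap\supp T)$ for every $T\in\mathbb G$. Taking $A=X$ in $(iv)$ gives $T_n\to\mathrm{id}_X$ with $\supp T_n=X$, whence $\delta\,\mu(B)\leq d_{[\mathcal R_G]}(T_n,\mathrm{id}_X)\to 0$, a contradiction.

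The crux is $(vii)\Rightarrow(iii)$: manufacturing aperiodic elements arbitrarily close to $\mathrm{id}_X$. Fix $\eps>0$ and apply $(vii)$ to a neighbourhood $U\subseteq\{g: d_G(g,1_G)<\eps\}$; almost every point is then nontrivially displaced by some element of $U$, and the Jankov--von Neumann selection theorem (\cite[Thm. 18.1]{MR1321597}) produces a Borel map $x\mapsto g_x\in U$ with $g_x\cdot x\neq x$ on a conull set. The obstruction is that $x\mapsto g_x\cdot x$ is merely Borel, not a measure-preserving bijection, and that composing several small moves ruins the control on displacement. The plan is thus to organise these moves into a genuine element of $\mathbb G$ that is aperiodic while still displacing each point by a single $U$-move (so that $d_{[\mathcal R_G]}(T,\mathrm{id}_X)\leq\eps$). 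Concretely I would first find a Borel measure-preserving partial matching along $U$-moves saturating a set of measure bounded below by an absolute constant, then iterate on the complement and assemble the resulting involutions, which have pairwise disjoint supports, and finally arrange the iteration as an odometer-type tower so that almost every orbit becomes infinite. The existence of a measurable matching covering a definite proportion of the space out of the sole hypothesis that almost every point has a neighbour --- a measurable-combinatorics statement I would derive from a maximal-matching and mass-transport argument --- is where I expect the genuine difficulty, and it is the main obstacle of the theorem.

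Finally I would close the cycle and fold in $(ii),(v),(vi)$ by Baire category in $\mathbb G^n$. The set of $n$-tuples inducing a free $\mathbb F_n$-action (and the set of $T$ making $\Gamma*\Z$ free) is a $G_\delta$, being the intersection over nontrivial reduced words $w$ of the sets $\{\bar T: w(\bar T)\text{ is fixed-point free}\}$; density of each such set is obtained by perturbing a given tuple with the small aperiodic elements furnished by $(iii)$, following the scheme of the Category Lemma of \cite{MR2210067}. This gives $(iii)\Rightarrow(v)$ and $(iii)\Rightarrow(vi)$. The converses are immediate specialisations: taking $\Gamma$ trivial in $(v)$, or $n=1$ in $(vi)$, yields a dense $G_\delta$ of elements inducing a free $\Z$-action, i.e.\ of aperiodic elements, whence $(i)$. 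For $(i)\Leftrightarrow(ii)$, note that aperiodicity is conjugation invariant, so $(ii)\Rightarrow(i)$ is clear; and for $(i)\Rightarrow(ii)$ it suffices to show that any two aperiodic elements of $\mathbb G$ are approximately conjugate in the topology of convergence in measure --- a Rokhlin tower argument --- so that the closure of the conjugacy class of a fixed aperiodic element contains every aperiodic element, hence, by $(i)$, all of $\mathbb G$.
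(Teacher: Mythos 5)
Your chain $(i)\Rightarrow(iii)\Rightarrow(iv)$, your metric argument for $(iv)\Rightarrow(vii)$ via Proposition~\ref{prop:dRG} (which is correct, and cleaner than proving $(iii)\Rightarrow(vii)$ through the uniform metric as the paper does), and your Baire-category attachments of \textit{(ii)}, \textit{(v)}, \textit{(vi)} are all sound. The genuine gap is the implication out of \textit{(vii)}, which you identify as the crux and leave unproved. Worse, you are aiming at the wrong target: you try to manufacture an \emph{aperiodic} element close to $\mathrm{id}_X$ directly from \textit{(vii)}, which forces you into a measurable-matching/mass-transport lemma plus an odometer tower to make orbits infinite while keeping every displacement a single $U$-move --- and as you yourself note, composing moves destroys the displacement control, so it is not even clear the object you want exists. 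That construction is not carried out, so the equivalence class of \textit{(vii)} is never connected back to the others.

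The step you are missing is that aperiodicity should never be constructed by hand: it falls out generically from the category argument. From \textit{(vii)} one only needs to prove \textit{(iv)}, i.e.\ produce, for each neighbourhood $V$ of $1_G$ and each positive-measure $A$, a \emph{nontrivial} $T\in[\mathcal R_G]$ supported in $A$ with $T(x)=f(x)x$ for some $f$ taking values in $V$. This requires no matching covering a definite proportion of the space: a single nontrivial such element suffices, because an exhaustion/maximality argument over pairwise disjoint supports (full groups are closed under cutting and pasting) upgrades it to one with support exactly $A$. The nontrivial element itself is obtained cheaply: pass to a continuous model via Theorem~\ref{thm:borelvscont}, so that almost every $x$ admits $g\in U$ (with $U^2\subseteq V$, $U=U^{-1}$) and an open $W_x$ with $g(W_x)\cap W_x=\emptyset$; extract a countable subcover by Lindel\"of, and then a short two-case analysis (either some $g_i(A_i)$ meets $A$, or two images $g_i(A_i)$, $g_j(A_j)$ overlap outside $A$) yields an involution-type element supported in $A$ whose displacements lie in $U$ or $U^2$. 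Once \textit{(iv)} is in hand, T\"ornquist's Category Lemma \cite{MR2210067} gives $(iv)\Rightarrow(v)\Rightarrow(vi)\Rightarrow(i)$, and the aperiodic elements you were trying to build appear as a dense $G_\delta$ rather than as an explicit construction.
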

\begin{proof}
Let us first prove that \textit{(i)} implies \textit{(ii)}, using the exact same argument as for $\Aut(X,\mu)$ (see \cite[Thm. 2.4]{MR2583950}). By Rokhlin's Lemma, the conjugacy class of any aperiodic element of $\mathbb G$ is dense in the set of aperiodic elements of $\mathbb G$ for the uniform topology (see \cite[Cor 5.11]{lemai2014} for details). Moreover the topology of $\mathbb G$ is weaker than the uniform topology, so \textit{(i)} implies that the conjugacy class of any aperiodic element is dense in $\mathbb G$, in other words \textit{(i)}$\impl$\textit{(ii)}.
Clearly \textit{(ii)} implies \textit{(iii)} and the continuity of the first-return map (Proposition \ref{prop:cofr}) yields that \textit{(iii)} implies \textit{(iv)}. 

Törnquist proved in \cite[The Category Lemma]{MR2210067} that \textit{(v)} holds for $\mathbb G=\Aut(X,\mu)$, and the only topological fact he used was precisely \textit{(iv)} so we can repeat his entire argument to get \textit{(iv)}$\impl$\textit{(v)} (see the observation before the proof of Lemma 2 in \cite{MR2210067}; the fact that $P$ is an involution is not relevant here). 

The implication \textit{(v)}$\impl$\textit{(vi)} is proven by induction, and \textit{(i)} is a reformulation of \textit{(vi)} for $n=1$, so the implication \textit{(vi)}$\impl$\textit{(i)} also holds.\\

So all the statements from \textit{(i)} to \textit{(vi)} are equivalent, and we now only have to prove that \textit{(vii)} is equivalent to them. For this, we will prove that \textit{(iii)} implies \textit{(vii)}, and then that \textit{(vii)} implies \textit{(iv)}.  

Let us show that \textit{(iii)} implies \textit{(vii)}. Assuming that \textit{(vii)} is not satisfied, we can find an open neighborhood of the identity $U$ in $G$ such that $\mu(\bigcup_{g\in U} \supp g)=1-\delta$ for some $\delta>0$. We define a neighborhood of the identity $\mathcal U$ in $\gtild$ by $$\mathcal U:=\{f\in \widetilde{[\mathcal R_G]}:\mu(\{x\in X: f(x)\not\in U\})<\delta/2\}.$$ For every $f\in\mathcal U$, we have $d_u(\Phi(f),\mathrm{id}_X)<1$, hence $\Phi(f)$ is not aperiodic. The projection of $\mathcal U$ on $[\mathcal R_G]$ is a neighborhood of the identity in $[\mathcal R_G]$ consisting of non-aperiodic elements, contradicting \textit{(iii)}. 

For the remaining implication \textit{(vii)}$\impl$\textit{(iv)},  we first use Theorem \ref{thm:borelvscont} and fix a topology $\tau$ on $X$ such that the action of $G$ on $(X,\tau)$ is continuous. 

Let $V$ be a neighborhood of the identity in $G$. We say that $T\in [\mathcal R_G]$ is \textit{uniformly small} if there exists $f\in \LL^0(X,\mu,G)$ such that $f(X)\subset V$ and $T(x)=f(x)x$ for every $x\in X$. Observe that if we prove that for every $A\in \MAlg(X,\mu)$ there exists a uniformly small $T\in [\mathcal R_G]$ such that $\supp(T)\subset A$, then we would have that for every $A\in \MAlg(X,\mu)$  we could construct a uniformly small $T\in [\mathcal R_G]$ such that $\supp(T)= A$ by a maximality argument. And so condition \textit{(iv)} would be  satisfied.

So let us fix an open set $B\supseteq A$ such that $\mu(B\setminus A)<\mu(A)$ and an open neighborhood of the identity $U\subseteq G$ such that $U\inv=U$ and $U^2\subseteq V$.

\begin{claim} There exists a countable family $(g_i)_{i\in\N}$ of elements of $U$ and an a.e. partition $(A_i)_{i\in\N}$ of $A$ such that for all $i\in\N$, $g_i(A_i)$ is a subset of $B$ which is disjoint from $A_i$. 
\end{claim}
\begin{proof}[Proof of the claim]
Let $(U_n)_{n\in\N}$ be a countable basis of open neighborhoods of the identity in $G$, and let $$S:=\bigcap_{n\in\N} \bigcup_{g\in U_n} \supp g.$$
By hypothesis, $S$ has full measure. Moreover since the action is continuous, for all $x\in S\cap B$ there exists a $g\in U$ such that $g\cdot x\in B$ and $g\cdot x\neq x$. Always by continuity we can also find an open neighborhood $W_x\subseteq B$ of $x$ such that $g(W_x)$ and $W_x$ are disjoint. We can now define the partition $\{A_n\}$ to be a countable open subcover of $(W_x)_{x\in S\cap B}$ which exists by Lindelöf's Theorem. 
\end{proof}

 We now have two possible cases. 
\begin{itemize}
\item If for some $i\in\N$, $g_i(A_i)$ is not disjoint from $A$, then we set $C:=A_i\cap g_i\inv(g_i(A_i)\cap A)$ and the element $T$ of $[\mathcal R_G]$ defined by 
$$T(x):=\left\{\begin{array}{ll}g_i\cdot x & \text{if }x\in C \\g_i\inv\cdot x & \text{if }x\in g(C) \\x & \text{otherwise}\end{array}\right.$$
is uniformly small, non trivial, and supported in $A$.
\item If for all $i\in\N$, $g_i(A_i)$ is disjoint from $A$, then since $\mu(B\setminus A)<\mu(A)$ and $\{A_i\}_i$ is a partition of $A$, there exist two distinct indices $i,j\in\N$ such that $g_i(A_i)\cap g_j(A_j)$ has positive measure. Letting $C:=g_i\inv(g_i(A_i)\cap g_j(A_j))$, we see that the element $T$ of $[\mathcal R_G]$ defined by 
$$T(x):=\left\{\begin{array}{ll}g_ig_j\cdot x & \text{if }x\in C \\g_jg_i\inv\cdot x & \text{if }x\in g_ig_j\inv(C) \\x & \text{otherwise}\end{array}\right.$$
is uniformly small, non trivial, and supported in $A$. \qedhere
\end{itemize}
\end{proof}
\begin{rmq}Condition $(vii)$ is always satisfied as soon as $G$ is non discrete and acts essentially freely and is never satisfied for countable discrete groups. In fact, it easy to see that the aperiodic elements form a closed proper subset of full groups of countable pmp equivalence relations.
\end{rmq}

\begin{cor}\label{cor:dicholc}
Let $G$ be a locally compact group acting ergodically and in a Borel measure-preserving manner on $(X,\mu)$. Then we have the following dichotomy:
\begin{enumerate}[(1)]
\item either $\mathcal R_G$ is a countable pmp equivalence relation,
\item or the set of aperiodic elements is dense in $[\mathcal R_G]$.
\end{enumerate}
\end{cor}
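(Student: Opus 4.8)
The plan is to deduce the dichotomy from the equivalence between conditions \textit{(i)} and \textit{(vii)} in Theorem \ref{thm:aperdense}, arguing by contraposition: assuming we are not in case \textit{(2)}, I will show that we must be in case \textit{(1)}. So suppose the set of aperiodic elements of $[\mathcal R_G]$ is not dense. By Theorem \ref{thm:aperdense} this means that condition \textit{(vii)} fails, so there is a neighborhood of the identity in $G$ --- which I may take to be open, call it $U$, since shrinking $U$ only shrinks the set below --- such that $\bigcup_{g\in U}\supp g$ does not have full measure. Taking complements, this says exactly that the set
\[Y:=\{x\in X:\ g\cdot x=x\text{ for all }g\in U\}=\{x\in X:\ U\subseteq G_x\}\]
has positive measure, where $G_x$ denotes the stabilizer of $x$.

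First I would observe that every $x\in Y$ has a countable orbit. Indeed, for such $x$ the stabilizer $G_x$ is a subgroup containing the open set $U$, hence $G_x$ is an open (and therefore closed) subgroup of $G$. Since $G$ is Polish, it is separable, and a separable group cannot contain an uncountable family of pairwise disjoint nonempty open sets; as the cosets of an open subgroup form such a family, $G_x$ has countable index. Therefore the orbit $G\cdot x$, which is in bijection with $G/G_x$, is countable. Note that this step does not actually use local compactness.

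Next I would propagate this to a full measure set using ergodicity. The saturation $G\cdot Y$ is the image of the Borel set $G\times Y$ under the Borel action map, hence analytic and in particular $\mu$-measurable; it is $G$-invariant and contains $Y$, so it has positive measure and therefore full measure by ergodicity. Every point of $G\cdot Y$ lies in the orbit of some point of $Y$, so it too has a countable orbit. Consequently almost every $G$-orbit is countable, which means that on the invariant full measure set $G\cdot Y$ the relation $\mathcal R_G$ is a Borel equivalence relation with countable classes; since the action is measure preserving, $\mathcal R_G$ is a countable pmp equivalence relation, i.e.\ we are in case \textit{(1)}.

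The only delicate points are the measurability bookkeeping: checking that $\bigcup_{g\in U}\supp g$ and the saturation $G\cdot Y$ are measurable so that ergodicity can be invoked, and that an orbit equivalence relation with countable classes really is a countable (Borel) pmp equivalence relation. Both are handled by the descriptive set theory recalled in the preliminaries --- analyticity of projections, the Luzin--Suslin theorem, and the Feldman--Moore representation of countable pmp equivalence relations --- so I expect no serious obstacle once the equivalence $(i)\Leftrightarrow(vii)$ is in hand.
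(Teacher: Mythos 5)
Your argument is correct, but it takes a genuinely different route from the paper's. The paper forms the \emph{core} $\bigcap_n\bigcup_{g\in U_n}\supp g$ for a neighbourhood basis $(U_n)$ of $1_G$; this is a Borel, $G$-invariant set, so ergodicity forces it to have measure one (in which case (2) follows from condition \textit{(vii)}) or measure zero. In the measure-zero case the paper shows that the morphism $G\to(\Aut(X,\mu),d_u)$ is uniformly continuous, deduces from separability and Proposition \ref{prop:polishfordu} that $[G]_D$ is the full group of a countable pmp equivalence relation, and then invokes Proposition \ref{prop:oeincl} --- and this is precisely where local compactness (Haar measure plus Fubini) enters --- to upgrade this to the countability of $\mathcal R_G$ itself. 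You instead apply ergodicity to the saturation of the positive-measure set $Y$ of points whose stabilizer contains $U$, and exploit the purely topological fact that an open subgroup of a separable group has countable index. This buys something real: local compactness is never used, so your argument establishes the dichotomy for an arbitrary Polish group $G$. Two pieces of bookkeeping that you flagged do need a word. First, $Y$ is only co-analytic, so before saturating you should replace it by a Borel subset of the same measure; the saturation of that subset is analytic, hence measurable, and ergodicity applies to it. Second, to see that $\mathcal R_G$ restricted to the saturated conull set $Z$ is a countable \emph{Borel} pmp relation --- rather than merely an analytic relation with countable classes, and analytic sets with countable sections need not be Borel --- observe that every $z\in Z$ has open stabilizer $G_z$ (a conjugate of a subgroup containing $U$), so every coset of $G_z$ is open and meets any fixed countable dense subgroup $\Lambda\leq G$; hence $G\cdot z=\Lambda\cdot z$ for all $z\in Z$, and $\mathcal R_G$ coincides on $Z$ with the countable pmp equivalence relation $\mathcal R_\Lambda$. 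With these two lines added your proof is complete.
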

\begin{proof}
Let $\{U_n\}_n$ be a sequence of open neighborhoods of the identity in $G$ such that $\cap_n U_n=\{id\}$. We define the \textbf{core} of the action to be the intersection \[\bigcap_{n\in \N}\bigcup_{g\in U_n} \supp g.\]

Note that the core is a Borel set by Theorem \ref{thm:borelvscont}. Moreover the core is $G$-invariant because for all $g,h\in G$, $h(\supp g)=\supp(hgh\inv)$ and the conjugation by $h$ is a continuous isomorphism of $G$. By ergodicity, either the core has measure one, in which case (2) holds by a direct application of condition $(vii)$ of the previous theorem, or the core has measure zero.

So assume that the core has measure zero. Then it is easy to check that the $G$-action yields a uniformly continuous morphism $G\to (\Aut(X,\mu),d_u)$. By separability of the group $G$, we can conclude from Proposition \ref{prop:polishfordu} that $[G]_D$ is the full group of a countable pmp equivalence relation. Moreover $G$ is locally compact, so we can apply Proposition \ref{prop:oeincl} to deduce that the equivalence relation $\mathcal R_G$ is countable up to a measure zero set, that is $(1)$ holds. 
\end{proof}

Let us give a non-trivial example where condition (1) of the previous corollary is satisfied. 

\begin{exemple}
Let $(\Gamma_n)_{n\in\N}$ be a sequence of finite groups and let $G:=\prod_{n\in\N} \Gamma_n$ be their product. Let $(X,\mu)$ be a standard probability space, and fix a partition $(A_n)$ of $X$ such that each $A_n$ has positive measure.  For every $n\in\N$, we then fix  a measure-preserving  action $\alpha_n$ of the finite group $\Gamma_n$ which is free when restricted to $A_n$, and which is trivial outside of it.
  
  We can now define an action of the group $G$ on $X$ by $(\gamma_n)_n\cdot x=\alpha_m(\gamma_m)x$ whenever $x\in A_{m}$. This action is faithful and its core is trivial. Note also that we can embed in this way any profinite group into any ergodic full group $\mathbb G$. It is in fact sufficient to take the actions $\alpha_n$ such that $\alpha_n(G_n)\subset \mathbb G$. 
\end{exemple}

%\begin{rmq}Note that condition \textit{(ii)} in Theorem \ref{thm:aperdense} cannot hold in a non-trivial Polish group that admits a bi-invariant complete metric (a SIN group), because the conjugacy class of any element lies on a sphere around the identity. We don't know if there are examples of orbit full groups with a SIN Polish topology which is not the uniform topology, but by the previous corollary such  full groups should come from the action of a non locally compact group.
%\end{rmq}

\subsection{Uniqueness of the Polish topology}

We now show that the topology of convergence in measure is the unique possible Polish group topology for ergodic orbit full groups.

\begin{thm}\label{thm:atmostonepolishergo}
Let $\mathbb G$ be an ergodic full group. 
\begin{enumerate}[(1)]
\item Any Polish group topology on $\mathbb G$ is weaker than the uniform topology .
\item Any Polish group topology on $\mathbb G$ refines the weak topology.
\item The group $\mathbb G$ carries at most one Polish group topology.
\end{enumerate}
\end{thm}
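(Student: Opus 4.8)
The plan is to prove the three statements in the order (1), (2), (3), since (3) is an immediate formal consequence of the first two. Throughout I would use the special structure of ergodic full groups provided earlier: every element has a dense conjugacy class behavior is not available in general, but crucially I have Proposition~\ref{prop:transsamemeas} (transitivity on sets of equal measure via involutions), Proposition~\ref{prop:dyeunif} (completeness of $d_u$ on full groups), and the fact that an ergodic full group is simple (Fathi's theorem). The overarching strategy is to pin down a given abstract Polish group topology $\tau$ by comparing it to the two reference topologies, using automatic-continuity-style arguments built from Property~\ref{proper}~($\gamma$): a Borel (in particular analytic or continuous) homomorphism between Polish groups is automatically continuous.

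For statement (2), I would show that $\tau$ refines the weak topology by producing enough $\tau$-continuous functions to separate the weak-topology structure. The natural approach is to consider, for a fixed $A\in\MAlg(X,\mu)$, the map $T\mapsto\mu(T(A)\bigtriangleup A)$ or more robustly the action of $\mathbb G$ on $\MAlg(X,\mu)$, and argue that this action is $\tau$-continuous. Concretely, I would try to show that the orbit map $\mathbb G\to\MAlg(X,\mu)$, $T\mapsto T(A)$, is Borel with respect to $\tau$; since $\mathbb G$ with $\tau$ is Polish and the target is Polish, Borel measurability plus the group action structure should upgrade to continuity. The key leverage is simplicity: any $\tau$-Borel homomorphism or equivariant map out of a simple Polish group has very constrained kernel behavior, and one can use Pettis-type arguments (a $\tau$-Baire-measurable homomorphism is $\tau$-continuous) to conclude. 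So the plan is: exhibit the weak topology as induced by a family of $\tau$-Baire-measurable maps, then invoke automatic continuity to deduce $\tau$ refines the weak topology.

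For statement (1), I would argue that $\tau$ is weaker than the uniform topology, i.e. the identity map $(\mathbb G,d_u)\to(\mathbb G,\tau)$ is continuous. Here I cannot directly invoke Property~\ref{proper}~($\gamma$) because $(\mathbb G,d_u)$ need not be Polish (it is Polish exactly for countable pmp relations). Instead I would use completeness of $d_u$ on full groups (Proposition~\ref{prop:dyeunif}) together with a closed-graph or Baire-category argument. The cleanest route: show that the identity map has Borel graph and that $d_u$-convergent sequences have $\tau$-convergent subsequences, or use that uniformly small perturbations (supported on small sets) form a neighborhood basis and that these are $\tau$-small by the support/measure estimates already appearing in the proof of Theorem~\ref{thm:polishfullgroups}. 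Concretely I expect to reduce to showing that if $d_u(T_n,\mathrm{id})\to 0$ then $T_n\to\mathrm{id}$ in $\tau$, by decomposing $T_n$ using its support and exploiting ergodicity to control it through involutions of small support whose $\tau$-behavior is governed by the refinement of the weak topology established in (2).

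Finally, statement (3) follows immediately: if $\tau_1,\tau_2$ are two Polish group topologies, then by (1) both are weaker than $d_u$ and by (2) both refine the weak topology, and the identity map $(\mathbb G,\tau_1)\to(\mathbb G,\tau_2)$ is then a Borel bijective homomorphism between Polish groups (its graph is caught between the uniform and weak structures), hence a topological isomorphism by Property~\ref{proper}~($\gamma$); applying this symmetrically gives $\tau_1=\tau_2$. I expect the \emph{main obstacle} to be statement (1), because the uniform topology is non-separable and hence not Polish in general, so none of the automatic-continuity machinery applies to it directly; the real work is to transfer $d_u$-control into $\tau$-control using only the Polish structure of $\tau$, the completeness of $d_u$, simplicity/ergodicity, and the support estimates, rather than any reconstruction-theorem input.
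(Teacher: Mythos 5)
Your architecture (prove (1) and (2), deduce (3) formally) matches the paper, and your derivation of (3) is essentially correct — in fact only (2) is needed for it, since once both topologies refine the weak topology, Luzin--Suslin identifies both Borel structures with the weak one and Property~\ref{proper}~($\gamma$) finishes. But there are genuine gaps in both (1) and (2). For (1), your concrete mechanism runs backwards: you propose to show that $d_u$-small elements are $\tau$-small by controlling involutions of small support ``by the refinement of the weak topology established in (2).'' Knowing that $\tau$ refines the weak topology only says that $\tau$-convergence implies weak convergence; it gives no upper bound whatsoever on $\tau$-neighborhoods of the identity, so it cannot certify that anything is $\tau$-small. Moreover, you explicitly discard the automatic continuity route on the grounds that $(\mathbb G,d_u)$ is non-separable — but that is precisely the point of the theorem of Kittrell and Tsankov that the paper invokes: the automatic continuity \emph{property} of $(\mathbb G,d_u)$ (every homomorphism into a separable group is continuous) requires no separability of the source, and its proof — a Steinhaus-type argument built from Proposition~\ref{prop:transsamemeas} and the factorization of every element of a full group into three involutions — is the real content of (1). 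Applied to the identity homomorphism into the separable group $(\mathbb G,\tau)$, it gives the conclusion immediately. (A salvageable version of your Baire-category idea exists: granting (2), every $\tau$-open $V$ is $d_u$-Borel via Luzin--Suslin, $\mathbb G$ is covered by countably many $\tau$-translates of $V$, so $V$ is non-meager in the Baire space $(\mathbb G,d_u)$ and Pettis gives that $VV^{-1}$ contains a $d_u$-ball — but you never articulate this Pettis step, and it is the whole argument.)

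For (2), you assert that the orbit maps $T\mapsto T(A)$ into $\MAlg(X,\mu)$ should be $\tau$-Borel and then appeal to Pettis, but you give no mechanism for the Borelness, which is the entire difficulty: a priori the abstract topology $\tau$ knows nothing about the measure space. The mechanism (due to Kallman, and what the paper adapts) is to express the sets $\{g\in\mathbb G:\mu(A\bigtriangleup gA)<\eps\}$ in purely group-theoretic terms — quantifying over conjugates, commutation relations and involutions, with Proposition~\ref{prop:transsamemeas} translating measure comparisons into algebraic statements — so that they are analytic for \emph{any} Polish group topology; Baire-measurability and Property~\ref{proper}~($\gamma$) then give continuity into $(\Aut(X,\mu),w)$. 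Your appeal to simplicity of $\mathbb G$ is not the relevant leverage here; simplicity constrains kernels, but the issue is measurability of the map, not injectivity. As written, both (1) and (2) rest on steps you have not supplied and whose intended substitutes would not go through.
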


Item \textit{(1)} was shown to hold by Kittrell and Tsankov for the full group of an ergodic countable pmp equivalence relation \cite[Thm. 3.1]{MR2599891}, while item \textit{(2)} was proven by Kallman for $\mathbb G=\Aut(X,\mu)$ in \cite{MR796452}. Hence we will not give a detailed proof of these points, but we will just indicate how to adapt the previous results to our broader setting. 
\begin{proof}
\textit{(1)} The proof of Theorem $3.1$ in \cite{MR2599891} only makes use of ergodicity via Proposition \ref{prop:transsamemeas}, and of the fact that in any full group, every element may be written as the product of at most three involutions \cite{MR1244984}.
So it adapts verbatim to obtain that $(\mathbb G, d_u)$ has the automatic continuity property\footnote{A topological group $G$ has the \textbf{automatic continuity property} if every morphism from $G$ into a separable group $H$ is continuous. }. We deduce that the identity map $(\mathbb G,d_u)\to (\mathbb G,\tau)$ is continuous, in other words the uniform topology refines $\tau$.

\textit{(2)} The arguments in \cite{MR796452} also use only Proposition \ref{prop:transsamemeas} and they adapt verbatim to obtain that the subsets of the form $\{g\in\mathbb G: \mu(A\bigtriangleup g A)<\eps\}$ are analytic for any Polish group topology $\tau$ on $\mathbb G$. In particular, the identity map $(\mathbb G, \tau)\to (\Aut(X,\mu), w)$ is Baire-measurable, hence continuous by Property \ref{proper} ($\gamma$), so that $\tau$ refines $w$.

\textit{(3)} Let $\tau$ and $\tau'$ be two Polish group topologies on $\mathbb G$. By \textit{(2)}, both topologies have to refine the weak topology, so the inclusion map $\mathbb G\into (\Aut(X,\mu),w)$ is Borel for both topologies. By Theorem \ref{thm:luzinsuslin}, the Borel $\sigma$-algebras of $\mathbb G$ for $\tau$ and $\tau'$ are both equal to the Borel $\sigma$-algebra induced by the  weak topology. So $\tau=\tau'$ by Property \ref{proper} ($\gamma$).
\end{proof}

We note that the proof of \textit{(3)} yields the following corollary.

\begin{cor}\label{cor:ergopolishborel}Let $\mathbb G$ be an ergodic full group admitting a Polish topology. Then it is a Borel subset of $\Aut(X,\mu)$ for the weak topology.
\end{cor}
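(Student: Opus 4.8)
The plan is to read the result off from item \textit{(2)} of Theorem~\ref{thm:atmostonepolishergo} together with the Luzin--Suslin theorem. Fix any Polish group topology $\tau$ on the ergodic full group $\mathbb G$. By item \textit{(2)}, the topology $\tau$ refines the weak topology $w$, so the set-theoretic inclusion $\iota\colon(\mathbb G,\tau)\to(\Aut(X,\mu),w)$ is continuous, and in particular Borel. This is the crucial point: without item \textit{(2)} we would have no control relating $\tau$ to the ambient weak Borel structure.

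Now $(\mathbb G,\tau)$ is a standard Borel space since $\tau$ is Polish, and $(\Aut(X,\mu),w)$ is a standard Borel space as well (by the proposition following Definition~\ref{df:topoaut}). Thus $\iota$ is an injective Borel map between standard Borel spaces, so Theorem~\ref{thm:luzinsuslin} applies: the $\iota$-image of every Borel subset of $\mathbb G$ is Borel in $\Aut(X,\mu)$. Taking this Borel subset to be all of $\mathbb G$, we conclude that $\mathbb G=\iota(\mathbb G)$ is a Borel subset of $\Aut(X,\mu)$ for the weak topology, which is what we wanted.

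I expect no genuine obstacle here once item \textit{(2)} is available: its content — that a Polish topology on an ergodic full group necessarily refines $w$ — is exactly what upgrades the abstract inclusion into a Borel injection of standard Borel spaces, after which Luzin--Suslin does all the work. This is precisely the mechanism already used in the proof of item \textit{(3)}, where the inclusion $\mathbb G\hookrightarrow(\Aut(X,\mu),w)$ was observed to be Borel; the only difference is that here we invoke the \emph{image-of-a-Borel-set} conclusion of Luzin--Suslin, rather than comparing the Borel $\sigma$-algebras of two candidate topologies.
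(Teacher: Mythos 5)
Your argument is correct and is essentially the paper's own: the authors note that the proof of item \textit{(3)} of Theorem~\ref{thm:atmostonepolishergo} yields the corollary, and that proof likewise uses item \textit{(2)} to see that the inclusion $\mathbb G\hookrightarrow(\Aut(X,\mu),w)$ is Borel and then invokes Theorem~\ref{thm:luzinsuslin}. Nothing further is needed.
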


We do not know whether the full group generated by the circle acting on itself by translation is Polishable. Note however that this full group is Borel by Corollary \ref{cor:boreldye}.  The following question has a positive answer when $\mathbb G$ is either $\Aut(X,\mu)$ \cite{MR3042607}, or the full group of a countable pmp ergodic equivalence relation \cite{MR2599891}.

\begin{qu}Let $\mathbb G$ be an ergodic orbit full group, then does it have the automatic continuity property? 
\end{qu}

Let us now observe that the third point of Theorem \ref{thm:atmostonepolishergo} still holds in the case of a full group with a countable ergodic decomposition, using a simple result from the second named author's thesis, which we prove for the reader's convenience. 

\begin{lem}\label{lem:uniprod}
Let $(G_i)_{i\in\N}$ be a countable family of groups with trivial center admitting at most one Polish group topology. 
Then $\prod_{i\in\N}G_i$ has at most one Polish topology.
\end{lem}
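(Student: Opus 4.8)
The plan is to show that every Polish group topology on $G:=\prod_{i\in\N}G_i$ coincides with the product topology $\sigma$ obtained by equipping each factor $G_i$ with its (unique) Polish group topology; since $(G,\sigma)$ is itself a Polish group, this immediately yields that $G$ has at most one Polish group topology. Throughout, write $G_i\leq G$ for the $i$-th coordinate subgroup and $H_i:=\prod_{j\neq i}G_j$ for its complement.

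The key algebraic observation, and the only place where the triviality of the centers is used, is that both of these are centralizers:
$$H_i=C_G(G_i)\qquad\text{and}\qquad G_i=C_G(H_i).$$
Indeed, an element of $G$ commutes with all of $G_i$ exactly when its $i$-th coordinate lies in $Z(G_i)=\{1_{G_i}\}$, which gives $H_i=C_G(G_i)$; the symmetric computation, using $Z(G_j)=\{1_{G_j}\}$ for $j\neq i$, gives $G_i=C_G(H_i)$. Since a centralizer is an intersection of sets of the form $\{x:xg=gx\}$, each of which is closed in any Hausdorff topological group, both $G_i$ and $H_i$ are closed for any Polish group topology $\tau$ on $G$.

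The main step is to prove that every coordinate projection is continuous. Fix a Polish group topology $\tau$ on $G$. As $G_i$ is $\tau$-closed, it is Polish for the induced topology by Property \ref{proper} ($\alpha$), and by hypothesis this induced topology is \emph{the} unique Polish group topology $\tau_i$ on $G_i$. Now consider the abstract homomorphism $\pi_i\colon G\to G_i$, whose kernel is $H_i$. Since $H_i$ is $\tau$-closed and normal, $G/H_i$ is Polish by Property \ref{proper} ($\beta$), and $\pi_i$ factors as the composition of the continuous quotient map $G\to G/H_i$ with the abstract isomorphism $\overline{\pi_i}\colon G/H_i\to G_i$. Transporting the quotient topology through $\overline{\pi_i}$ endows $G_i$ with a Polish group topology, which by the uniqueness hypothesis must equal $\tau_i$; hence $\overline{\pi_i}$ is a homeomorphism and $\pi_i\colon(G,\tau)\to(G_i,\tau_i)$ is continuous.

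To conclude, note that $\tau_i$ is determined independently of $\tau$, so the product topology $\sigma:=\bigotimes_i\tau_i$ is canonical, and $(G,\sigma)$ is a Polish group as a countable product of Polish groups. Because $\sigma$ is the initial topology of the family $(\pi_i)_i$, the continuity of every $\pi_i$ established above shows that $\mathrm{id}\colon(G,\tau)\to(G,\sigma)$ is a continuous bijective homomorphism with trivial kernel; by Property \ref{proper} ($\gamma$) it is therefore a topological isomorphism, so $\tau=\sigma$. Thus any Polish group topology on $G$ equals the fixed topology $\sigma$. The main obstacle is really concentrated in the second paragraph: identifying $\ker\pi_i=H_i$ as a centralizer is what both forces its closedness and invokes the trivial-center hypothesis, after which the passage to quotients and the appeal to uniqueness on each factor are routine applications of the structural properties of Polish groups.
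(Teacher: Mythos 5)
Your proof is correct and follows essentially the same route as the paper: the trivial-center hypothesis identifies each coordinate subgroup and its complement as mutual centralizers, hence closed for any Polish group topology on the product, and uniqueness of the topology on the factors together with Property \ref{proper} ($\gamma$) finishes the argument. The only difference is cosmetic: where you pass through the Polish quotient $G/H_i$ and invoke the uniqueness hypothesis on $G_i$ a second time to get continuity of $\pi_i$, the paper instead observes directly that $\pi_i^{-1}(U)=\tilde U\cdot H_i$ is analytic (the product of a Borel set with a closed set) and applies Property \ref{proper} ($\gamma$).
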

\begin{proof}Fix a Polish group topology on $\prod_{i\in\N} G_i$, and for $i\in \N$ denote by $\pi_i$ the projection on $G_i$. Let $n\in\N$, set \[G'_n:=\{g\in \prod_{i\in\N}G_i:\forall k\neq n, \pi_k(g)=e\}.\]

 Because each $G_i$ has trivial center, $G'_n$ is the commutator of $H_n:=\{g\in\prod_{i\in\N}G_i: \pi_n(g)=e\}$, hence $G'_n$ is closed in $\prod_{i\in \N} G_i$. Since $G_n$ is isomorphic to $G'_n$, the induced topology on $G'_n$ is the unique Polish group topology on $G_n$. In particular, for every open subset $U$ of $G_n$, the set \[\tilde U:=\{e\}\times\cdots\times\{e\}\times U\times \{e\}\times\cdots \subset \prod_{i\in\N}G_i\] is a $G_\delta$.

Observe that $H_n$ is closed, since it is the commutator of $G'_n$. So for any open subset $U$ of $G_n$, the set \[\tilde U\cdot H_n=G_1\times\cdots\times G_{n-1}\times U\times G_{n+1}\times\cdots\] is analytic, which implies by Property \ref{proper} ($\gamma$) the uniqueness of the Polish topology of $\prod_{i\in\N} G_i$.
\end{proof}

A full group with countably many ergodic components is isomorphic to the product of the full groups of its restrictions to these ergodic components, so we can combine Theorem \ref{thm:atmostonepolishergo} \textit{(3)} with the previous lemma to get the following corollary. 

\begin{cor}Let $\mathbb G$ be a full group with countably many ergodic components. Then it carries at most one Polish group topology.
\end{cor}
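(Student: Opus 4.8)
The plan is to reduce to Lemma \ref{lem:uniprod} by way of the ergodic decomposition. First I would make precise what ``countably many ergodic components'' provides: a countable partition $X=\bigsqcup_{i\in\N}X_i$ into $\mathbb G$-invariant pieces on each of which $\mathbb G$ acts ergodically. Since each $X_i$ is invariant, every $T\in\mathbb G$ preserves this partition and restricts to an element $T_i$ of the full group $\mathbb G_i$ of the restriction of $\mathbb G$ to $(X_i,\mu|_{X_i})$ (suitably renormalised); conversely, because $\mathbb G$ is stable under cutting and pasting any sequence of its elements, an arbitrary family $(T_i)_i$ with $T_i\in\mathbb G_i$ can be pasted along $(X_i)_i$ to yield an element of $\mathbb G$. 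This gives an abstract group isomorphism $\mathbb G\cong\prod_{i\in\N}\mathbb G_i$, where each $\mathbb G_i$ is an \emph{ergodic} full group.

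Next I would verify that the factors satisfy the hypotheses of Lemma \ref{lem:uniprod}. By Theorem \ref{thm:atmostonepolishergo} \textit{(3)}, each ergodic full group $\mathbb G_i$ admits at most one Polish group topology, so the remaining point is that $\mathbb G_i$ has trivial center. Since $\mathbb G_i$ is ergodic it is simple by Fathi's theorem, so its center, being a normal subgroup, is either trivial or all of $\mathbb G_i$. The latter would force $\mathbb G_i$ to be abelian, which is impossible: by Proposition \ref{prop:transsamemeas} the group $\mathbb G_i$ contains non-commuting involutions (for instance two involutions swapping overlapping pairs of sets of equal measure). Hence $\mathbb G_i$ has trivial center.

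Finally, applying Lemma \ref{lem:uniprod} to the family $(\mathbb G_i)_{i\in\N}$ shows that $\prod_{i\in\N}\mathbb G_i$ carries at most one Polish group topology, and since ``admitting at most one Polish group topology'' is invariant under abstract group isomorphism, transporting this property through $\mathbb G\cong\prod_i\mathbb G_i$ gives the same conclusion for $\mathbb G$. I expect the only genuinely non-routine ingredient to be the verification of the trivial-center hypothesis, as the decomposition and the uniqueness for each factor follow immediately from the cited results; the rest is bookkeeping with the cutting-and-pasting structure of full groups.
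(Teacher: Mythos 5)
Your proposal is correct and follows essentially the same route as the paper: decompose $\mathbb G$ as the product $\prod_i\mathbb G_i$ of the ergodic full groups of its restrictions to the ergodic components, invoke Theorem \ref{thm:atmostonepolishergo} \textit{(3)} for each factor, and apply Lemma \ref{lem:uniprod}. The only difference is that you explicitly verify the trivial-center hypothesis (via simplicity of ergodic full groups and the existence of non-commuting involutions), a detail the paper leaves implicit; your verification is correct.
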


\subsection{More (non) Borel full groups}

In this section, we give more examples of Borel full groups, as well as examples of non Borel ones. But first, we need some background on the space of probability measures.

Let $(Y,\tau)$ be a Polish space.  We equip the space $\mathcal P(Y)$  of probability measures on $Y$ with the weak-* topology, that is, the coarsest topology making the maps 
$$\mu\in\mathcal P(Y)\mapsto \int_Xfd\mu$$
continuous for all bounded continuous functions  $f: Y\to \R$. This is a Polish topology (see e.g. \cite[Sec. 17.E]{MR1321597}).
\begin{lem}\label{lem:pushfwdco}Let $(X,\mu)$ be a standard probability space, and $(Y,\tau)$ be a Polish space. Then the following map 
$$\left.\begin{array}{cl}\LL^0(X,\mu,Y) & \to \mathcal P(Y) \\f & \mapsto f_*\mu\end{array}\right.$$
is continuous.
\end{lem}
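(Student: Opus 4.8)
The plan is to show that pushing forward $\mu$ along $f$ is continuous for the topology of convergence in measure on $\LL^0(X,\mu,Y)$ and the weak-$*$ topology on $\mathcal P(Y)$. By the definition of the weak-$*$ topology, it suffices to show that for every bounded continuous function $\varphi: Y\to\R$, the map $f\mapsto \int_Y \varphi\, d(f_*\mu)$ is continuous, and by the change-of-variables formula this integral equals $\int_X \varphi(f(x))\, d\mu(x)$. So I need to verify that $f\mapsto \int_X \varphi(f(x))\,d\mu(x)$ is continuous for each fixed bounded continuous $\varphi$.

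To do this I would use the sequential characterization of convergence in measure provided by Proposition \ref{prop:equivcvmeasure}, since the topology of convergence in measure is metrizable. Suppose $f_n\to f$ in measure. I want to conclude $\int_X\varphi(f_n)\,d\mu\to\int_X\varphi(f)\,d\mu$. The cleanest route is to argue by the subsequence criterion: it is enough to show that every subsequence of $\left(\int_X\varphi(f_n)\,d\mu\right)_n$ has a further subsequence converging to $\int_X\varphi(f)\,d\mu$. Given any subsequence, item (c) of Proposition \ref{prop:equivcvmeasure} furnishes a further subsequence $(f_{n_k})_k$ with $f_{n_k}(x)\to f(x)$ for almost every $x$. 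Since $\varphi$ is continuous, $\varphi(f_{n_k}(x))\to\varphi(f(x))$ a.e., and since $\varphi$ is bounded, the dominated convergence theorem gives $\int_X\varphi(f_{n_k})\,d\mu\to\int_X\varphi(f)\,d\mu$. As every subsequence admits such a convergent further subsequence with the same limit, the whole sequence converges, establishing continuity of $f\mapsto\int_X\varphi(f)\,d\mu$.

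Since this holds for every bounded continuous $\varphi$, the map $f\mapsto f_*\mu$ is continuous into $\mathcal P(Y)$ by the definition of the weak-$*$ topology as the coarsest topology making all such integration functionals continuous. I would also note that continuity (rather than merely sequential continuity) is justified because all the topologies involved are metrizable, so sequential continuity suffices.

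I do not expect any serious obstacle here; the argument is essentially a standard passage from convergence in measure to weak-$*$ convergence of pushforwards. The only point requiring slight care is that convergence in measure does not give almost-everywhere convergence directly, which is precisely why I route through the subsequence criterion of Proposition \ref{prop:equivcvmeasure}(c) rather than applying dominated convergence to the original sequence; the boundedness of $\varphi$ supplies the domination needed at that step.
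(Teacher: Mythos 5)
Your proof is correct and follows essentially the same route as the paper: reduce to the functionals $f\mapsto\int_X\varphi(f(x))\,d\mu(x)$ for bounded continuous $\varphi$ via the definition of the weak-$*$ topology and conclude by dominated convergence. In fact your use of the subsequence criterion of Proposition \ref{prop:equivcvmeasure}(c) is slightly more careful than the paper's own write-up, which asserts almost-everywhere convergence directly from convergence in measure (true only along a subsequence); your version supplies the missing step.
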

\begin{proof}
If a sequence $(f_n)_n$ converges to $f$ in measure, then for almost every $x\in X$, we have that $f_n(x)\to f(x)$. For every continuous and bounded function $g:Y\to \R$ by Lebesgue Dominated Convergence Theorem we have,
$$\int_Xgd{f_n}_*\mu=\int_X g(f_n(x)) d\mu(x) \to \int_Xg(f(x))d\mu(x)=\int_Xgd{f}_*\mu,$$
and hence the map is continuous. 
\end{proof}

\begin{prop} The set of completely atomic probability measures of a Polish space $Y$ is a Borel subset of $\mathcal P(Y)$.  
\end{prop}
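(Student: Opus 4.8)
The plan is to show that the set of completely atomic probability measures is Borel by expressing it through a countable Borel procedure. First I would recall that a probability measure $\nu$ on a Polish space $Y$ is completely atomic precisely when the total mass carried by its atoms equals $1$; equivalently, writing the atoms as a countable set with masses $(p_i)$, we have $\sum_i p_i = 1$. The natural strategy is to build a Borel function that computes the ``atomic mass'' of $\nu$ and then intersect with the level set where this equals $1$.

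\medskip

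The key steps, in order, are as follows. First I would fix a countable basis $(U_n)_{n\in\N}$ of $Y$ closed under finite intersections, or better, a sequence of finer and finer countable Borel partitions $(\mathcal P_k)_{k\in\N}$ of $Y$ into sets of small diameter (using a compatible complete metric), so that $\mathcal P_{k+1}$ refines $\mathcal P_k$ and the diameters of the pieces of $\mathcal P_k$ tend to $0$. For a measure $\nu$ and a partition $\mathcal P_k = \{B_{k,j}\}_j$, the quantity $\sum_j \nu(B_{k,j})^2$ is Borel in $\nu$ (each $\nu \mapsto \nu(B_{k,j})$ is Borel by the definition of the weak-$*$ topology and a standard approximation of indicators by continuous functions), and these quantities are monotone increasing in $k$. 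The crucial observation is that $\lim_k \sum_j \nu(B_{k,j})^2 = \sum_i p_i^2$, where the $p_i$ are the masses of the atoms: as the partition refines, off-diagonal contributions from the non-atomic part and from pairs of distinct atoms wash out, leaving only the squared masses of the atoms. Then $\nu$ is completely atomic if and only if $\sum_i p_i = 1$; to detect this I would instead use that the atomic part has full mass, which can be captured by comparing the limit of $\sum_j \nu(B_{k,j})^2$ against what it would be for a purely atomic measure, or more cleanly by extracting the individual atom masses via $p(\nu) := \lim_k \max_j \nu(B_{k,j})$ and iterating to peel off atoms one at a time, each step being a Borel operation. The set where the total peeled-off mass reaches $1$ is then a Borel condition.

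\medskip

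Concretely, the cleanest route is: define $M_k(\nu) := \sum_j \nu(B_{k,j})^2$, a Borel and nondecreasing sequence, set $M_\infty(\nu) := \lim_k M_k(\nu) = \sum_i p_i^2$ (Borel as a pointwise limit of Borel functions), and similarly observe that the total atomic mass $a(\nu) := \sum_i p_i$ can be recovered as a Borel function by a greedy extraction of atoms. Then $\nu$ is completely atomic iff $a(\nu)=1$, which is a Borel condition since $a$ is Borel, so the set in question is Borel.

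\medskip

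The main obstacle I anticipate is justifying rigorously that the atom-extraction procedure (or the passage from the refining-partition sums to the actual atomic masses) is genuinely Borel and correctly separates atoms from the continuous part, uniformly over all $\nu$. The delicate point is that atoms can lie on the boundaries between partition pieces and their mass could be split across adjacent pieces; I would handle this by choosing the partitions $\mathcal P_k$ so that each atom is eventually isolated inside a single piece (possible for any fixed $\nu$ since atoms are distinct points and the diameters shrink, but one must argue this works measurably for all $\nu$ at once), and by verifying that the limiting sum $\lim_k \sum_j \nu(B_{k,j})^2$ depends only on the atomic part. This convergence and measurability bookkeeping is the technical heart; once it is in place, the Borelness of the final level set is immediate.
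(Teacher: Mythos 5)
Your overall strategy---realize the total atomic mass $a(\nu)=\sum_i p_i$ as a Borel function of $\nu$ and take the level set $\{a=1\}$---is viable and genuinely different from the paper's proof, which instead expresses atomicity by covering all but $1/n$ of the mass with finitely many disjoint basic open sets, each carrying all of its mass at a single point. But as written your argument has a gap at the decisive step. The quantity you actually compute, $M_\infty(\nu)=\lim_k\sum_j\nu(B_{k,j})^2=\sum_ip_i^2$, does not determine whether $\sum_ip_i=1$ (it only detects the presence of at least one atom), so the detour through $M_\infty$ is a dead end; and the proposed substitute, a ``greedy extraction of atoms, each step being a Borel operation,'' is precisely the assertion that requires proof and is not supplied---selecting the $i$-th atom measurably in $\nu$ runs into ties and accumulation of atoms, and you flag this as the technical heart without resolving it. Two smaller slips: the sums $\sum_j\nu(B_{k,j})^2$ \emph{decrease} under refinement, by $\bigl(\sum_ic_i\bigr)^2\geq\sum_ic_i^2$, rather than increase (harmless, the limit still exists); and since the $\mathcal P_k$ are partitions, an atom lies in exactly one piece at each level, so the worry about its mass being ``split across adjacent pieces'' does not arise.

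The gap is fixable inside your framework. Writing $B_k(x)$ for the piece of $\mathcal P_k$ containing $x$, one has $\nu(B_k(x))\downarrow\nu(\{x\})$ for every $x$, since the pieces decrease to $\{x\}$ as the diameters shrink; hence by dominated and monotone convergence
$$a(\nu)=\nu\bigl(\{x\in Y:\nu(\{x\})>0\}\bigr)=\lim_{m\to\infty}\lim_{k\to\infty}\sum_j\nu(B_{k,j})\min\bigl(1,m\,\nu(B_{k,j})\bigr),$$
and each term, hence the iterated limit, is Borel in $\nu$ because $\nu\mapsto\nu(B)$ is Borel for every Borel set $B$ (the standard fact you allude to; see \cite[Sec. 17.E]{MR1321597}). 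With such a formula in place your proof closes, and $\{a=1\}$ is Borel. It is worth noting that your route, once completed, is more robust than the paper's displayed identity: if the atoms of $\nu$ accumulate at an atom, as for $\nu=\frac12\delta_0+\sum_{m\geq1}2^{-m-1}\delta_{1/m}$ on $[0,1]$, then no basic open set containing $0$ has all of its mass at a single point, so the finite families of single-atom open sets used in the paper can never capture the mass at $0$; the paper's criterion thus needs a small repair (for instance, requiring each $U_i$ to carry only all but $\eps$ of its mass at one point), whereas your level-set description is unaffected by this phenomenon.
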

\begin{proof}
We first note that whenever $U$ is an open subset of $Y$, then the set 
\begin{align*}  Atom(U):=\left\{\mu\in P(Y):\text{there exists }a\in U\text{ such that }\mu(U)=\mu(\{a\})\right\}
\end{align*}
is closed in $P(Y)$. Indeed if  $\mu\notin Atm(U)$, then there exist two positive functions $f$ and $g$ supported in $U$ with disjoint supports such that $\int_Xf d\mu$ and $\int_Xg d\mu$ are strictly positive.

Let $(U_n)_{n\in\N}$ be a countable basis of open subsets of $Y$. Let $\mathcal J$ be the set of finite subsets $I\subset \N$ such that $(U_i)_{i\in I}$ is a disjoint family of open sets. Since a measure $\mu$ is atomic if and only if for all $\epsilon>0$, there is an open set $U$ of $X$ of measure greater than $1-\epsilon$ containing finitely many atoms, the set of atomic measure is exactly \[\bigcap_{n\in \N} \bigcup_{I\in\mathcal  J}\left(\left\{\mu\in\mathcal P(Y): \mu\left(\bigcup_{i\in I} U_i\right)>1-\frac 1n\right\}\cap \bigcap_{i\in I} Atom(U_i)\right),\] and so it is Borel. 
\end{proof}
%\begin{rmq}The proof actually yields that the set of completely atomic probability measures is $\Pi_3^0$ in the Borel hierarchy.
%\end{rmq}

\begin{cor}\label{cor:ctblerange}Let $(X,\mu)$ be a standard probability space and $(Y,\tau)$ be a Polish space. Then the set $\LL^0_D(X,\mu,Y)$ of elements of $\LL^0(X,\mu,Y)$ with countable range is Borel.
\end{cor}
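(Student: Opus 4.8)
The plan is to reduce the statement about $\LL^0_D(X,\mu,Y)$ (elements with countable range) to the preceding proposition on completely atomic measures, via the pushforward map from Lemma \ref{lem:pushfwdco}. The key observation is that a measurable function $f\colon X\to Y$ has countable range (up to measure zero) if and only if its pushforward measure $f_*\mu$ is completely atomic. Indeed, if $f$ takes only countably many values $(y_n)$ on a conull set, then $f_*\mu=\sum_n \mu(f^{-1}(y_n))\delta_{y_n}$ is a countable sum of point masses, hence completely atomic. Conversely, if $f_*\mu$ is completely atomic, then it is concentrated on a countable set $\{y_n\}$, so $f(x)\in\{y_n\}$ for $\mu$-almost every $x$, and thus $f$ agrees a.e. with a function of countable range. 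This gives the set-theoretic identity
$$\LL^0_D(X,\mu,Y)=\{f\in\LL^0(X,\mu,Y): f_*\mu\text{ is completely atomic}\}.$$

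With this identity in hand, the proof is immediate. First I would invoke Lemma \ref{lem:pushfwdco} to get that the pushforward map $f\mapsto f_*\mu$ from $\LL^0(X,\mu,Y)$ to $\mathcal P(Y)$ is continuous, hence Borel. Then I would apply the preceding proposition, which asserts that the set $\mathcal A$ of completely atomic measures is a Borel subset of $\mathcal P(Y)$. Since $\LL^0_D(X,\mu,Y)$ is exactly the preimage of $\mathcal A$ under the pushforward map, and the preimage of a Borel set under a Borel (here continuous) map is Borel, we conclude that $\LL^0_D(X,\mu,Y)$ is Borel.

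The only genuinely substantive point to verify carefully is the equivalence \emph{``countable range a.e.'' $\iff$ ``completely atomic pushforward''}, and in particular the fact that this equivalence is literally an equality of subsets of $\LL^0(X,\mu,Y)$ once we remember that elements of $\LL^0$ are identified up to measure zero. The forward direction is routine; the reverse direction uses that a completely atomic measure on a Polish space has at most countably many atoms whose masses sum to $1$, so that the complement of the set of atoms is $f_*\mu$-null, whence $f$ lands in a fixed countable set almost everywhere. I expect this identity to be the main (though still mild) obstacle, as everything downstream is a purely formal application of the two cited results.
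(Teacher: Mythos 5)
Your proposal is exactly the paper's proof: the authors likewise observe that $f$ has countable range if and only if $f_*\mu$ is completely atomic, and then conclude by writing $\LL^0_D(X,\mu,Y)$ as the preimage of the Borel set of completely atomic measures under the continuous pushforward map of Lemma \ref{lem:pushfwdco}. Your more careful justification of the equivalence (in both directions, minding the a.e.\ identification) is a welcome elaboration of what the paper states in one line.
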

\begin{proof}
Observe that $f\in\LL^0(X,\mu,Y)$ has countable range if and only if $f_*\mu$ is completely atomic. So $\LL^0_D(X,\mu,Y)$ is the pre-image of a Borel set by a continuous function, hence it is Borel.   
\end{proof}

The next theorem is a Borel version of Theorem \ref{thm:polishfullgroups}. The proof in the Borel case is easier because we will not use a continuous model, that is, Theorem \ref{thm:borelvscont}. 

\begin{thm}\label{thm:borelfullgroups}
Let $G$ be a Polish group acting in a Borel way by measure preserving transformations on a standard probability space $(X,\mu)$. Suppose that the action is essentially free, and let $H\leq G$ be a subgroup of $G$. Then the following are equivalent:
\begin{enumerate}
\item $H$ is a Borel subgroup of $G$;
\item $[H]_D$ is a Borel subgroup of $\Aut(X,\mu)$;
\item $[\mathcal R_H]$ is a Borel subgroup of $\Aut(X,\mu)$.
\end{enumerate}
\end{thm}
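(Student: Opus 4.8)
The plan is to transport everything to the Polish group $\gtild$ via the map $\Phi$, where the two full groups become transparent subsets of $\LL^0(X,\mu,G)$. Since the action is essentially free and measure preserving, Theorem \ref{thm:polishfullgroups} gives that $\Phi$ restricts to a topological isomorphism from $\gtild$ onto $[\mathcal R_G]$, and that $[\mathcal R_G]$ is a Borel subset of $\Aut(X,\mu)$. As the topology of convergence in measure refines the weak topology and both are Polish, the identity map is a continuous injection, so Theorem \ref{thm:luzinsuslin} ensures that the Borel structure of $[\mathcal R_G]$ is the same for either topology; consequently a subset of $[\mathcal R_G]$ is Borel in $\Aut(X,\mu)$ if and only if its $\Phi$-preimage is Borel in $\gtild$. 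Writing $\LL^0(X,\mu,H)$ for the set of $f\in\LL^0(X,\mu,G)$ with $f(x)\in H$ a.e., essential freeness yields the identifications $[\mathcal R_H]=\Phi\big(\gtild\cap \LL^0(X,\mu,H)\big)$ and $[H]_D=\Phi\big(\gtild\cap \LL^0(X,\mu,H)\cap \LL^0_D(X,\mu,G)\big)$, the latter because an element of the orbit full group lies in $[H]_D$ precisely when its associated $f$ takes countably many values, all in $H$.

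For the forward implications (1)$\Rightarrow$(2) and (1)$\Rightarrow$(3), I would show that $H$ being Borel in $G$ forces $\LL^0(X,\mu,H)$ to be Borel in $\LL^0(X,\mu,G)$. The key is Lemma \ref{lem:pushfwdco}: the map $f\mapsto f_*\mu$ is continuous into $\mathcal P(G)$, and for Borel $H$ the evaluation $\nu\mapsto\nu(H)$ is Borel on $\mathcal P(G)$ (the class of sets for which this holds is a $\sigma$-algebra containing the open sets). Since $\LL^0(X,\mu,H)=\{f:f_*\mu(H)=1\}$ is the preimage of a Borel set under a continuous map, it is Borel. Intersecting with the Borel sets $\gtild$ and, for (2), with $\LL^0_D(X,\mu,G)$, which is Borel by Corollary \ref{cor:ctblerange}, and then applying the Borel isomorphism $\Phi$, both full groups come out Borel.

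For the converse implications (2)$\Rightarrow$(1) and (3)$\Rightarrow$(1), I would use the embedding $\iota:G\to\gtild$ sending $g$ to the constant function $c_g\equiv g$; it is continuous since $c_{g_n}\to c_g$ in measure whenever $g_n\to g$, and its image lies in $\gtild$ because the action is measure preserving. The point is that $\iota$ recovers $H$ exactly: using essential freeness, for a group element $g$ one has $\Phi(c_g)\in[\mathcal R_H]$ (equivalently $\Phi(c_g)\in[H]_D$) if and only if $g\in H$, since $h\inv g$ fixes a positive-measure set of points only when $h\inv g=1_G$. Hence $H=\iota\inv\big(\Phi\inv([\mathcal R_H])\big)=\iota\inv\big(\Phi\inv([H]_D)\big)$, and if either full group is Borel then so is its $\Phi$-preimage by the first paragraph, whence $H$ is the preimage of a Borel set under the continuous map $\iota$, hence Borel.

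The routine verifications are the two identifications of the first paragraph and the pushforward computation; the one step deserving care is the matching of the Borel structures of $[\mathcal R_G]$ coming from the weak and the convergence-in-measure topologies, which is exactly where Theorem \ref{thm:luzinsuslin} and the comparability of these two Polish topologies enter. The genuinely useful idea, which I expect to carry the converse directions, is that the constant-function embedding $\iota$ is a continuous section recovering $H$ from the full group, so that Borelness passes back from $\Aut(X,\mu)$ to $G$.
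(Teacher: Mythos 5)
Your proposal is correct and follows essentially the same route as the paper: transport everything through the injective Borel map $\Phi$ (using Luzin--Suslin to match Borel structures), identify $\Phi\inv([\mathcal R_H])$ and $\Phi\inv([H]_D)$ with $\gtild\cap\LL^0(X,\mu,H)$ and its intersection with $\LL^0_D(X,\mu,G)$, and recover $H$ via the constant-function embedding for the converse directions. The only deviation is that you prove Borelness of $\LL^0(X,\mu,H)$ directly from Lemma \ref{lem:pushfwdco} and the Borelness of $\nu\mapsto\nu(H)$ on $\mathcal P(G)$, where the paper simply cites the corollary after Proposition 8 in \cite{MR0414775}; both are fine.
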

\begin{proof}
As we did for the proof of Theorem \ref{thm:polishfullgroups}, we will consider the Polish space $\LL^0(X,\mu,G)$, and we use the fact that the map $\Phi: \LL^0(X,\mu,G)\to \LL^0(X,\mu,X)$ defined by 
$$\Phi(f)(x):=f(x)\cdot x$$
 is Borel\footnote{Note that $\LL^0(X,\mu,X)$ is a standard Borel space, whose Borel structure does not depend on the Polish topology we put on $X$ by \cite[Prop. 8]{MR0414775}.}. The homomorphism $\Phi$ is injective because the action is essentially free. 
Set $\widetilde{[H]_D}:=\Phi\inv([H]_D)$ and $\widetilde{[\mathcal R_H]}:=\Phi\inv([\mathcal R_H])$. Theorem \ref{thm:luzinsuslin} implies that $\widetilde{[H]_D}$ is Borel if and only if $[H]_D$ is, and that $\widetilde{[\mathcal R_H]}$ is Borel if and only if $[\mathcal R_H]$ is.

If we identify $G$ with the Borel subset of constant maps in $\LL^0(X,\mu,G)$, we have that $H=G\cap\widetilde{[H]}_D=G\cap \widetilde{[\mathcal R_H]}$ and so \textit{2}$\impl$\textit{1} and \textit{3}$\impl$\textit{1}. 
For the converse, first note that $\LL^0(X,\mu,H)$ is a Borel subset of $\LL^0(X,\mu,G)$, as shown in the corollary after Proposition 8 in \cite{MR0414775}. By Corollary \ref{cor:ctblerange}, $\LL^0_D(X,\mu,G)$ is also a Borel subset of $\LL^0(X,\mu,G)$ and the implications \textit{1}$\impl$\textit{2} and \textit{1}$\impl$\textit{3} hold because
\begin{align*}
\widetilde{[\mathcal R_H]}&=\widetilde{[\mathcal R_G]}\cap \LL^0(X,\mu,H)\\
\widetilde{[H]_D}&=\widetilde{[\mathcal R_H]}\cap \LL^0_D(X,\mu,G).\qedhere
\end{align*}
\end{proof}

Let us now point out two important consequences of the previous theorem. The first one is straightforward.

\begin{cor}\label{cor:boreldye}
Suppose that $G$ is a Polish group acting essentially freely and in a measure preserving Borel way on $(X,\mu)$. Then $[G]_D$ is Borel.
\end{cor}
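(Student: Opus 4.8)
The plan is to obtain Corollary~\ref{cor:boreldye} as an immediate special case of Theorem~\ref{thm:borelfullgroups}. The corollary asserts that $[G]_D$ is Borel whenever $G$ acts essentially freely and in a measure preserving Borel way on $(X,\mu)$. First I would observe that the hypotheses of the corollary are exactly the standing hypotheses of Theorem~\ref{thm:borelfullgroups}, so that theorem applies directly. I then simply take $H=G$ inside that theorem: $G$ is trivially a Borel subgroup of itself (indeed, it is all of $G$, which is certainly Borel).

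With $H=G$, the equivalence ``\textit{1}$\Leftrightarrow$\textit{2}'' furnished by Theorem~\ref{thm:borelfullgroups} says precisely that $G$ being a Borel subgroup of $G$ is equivalent to $[G]_D=[H]_D$ being a Borel subgroup of $\Aut(X,\mu)$. Since the left-hand condition holds trivially, the right-hand condition follows, which is exactly the assertion of the corollary. So the entire proof consists of invoking Theorem~\ref{thm:borelfullgroups} with the choice $H=G$ and reading off the implication \textit{1}$\Rightarrow$\textit{2}.

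There is essentially no obstacle here, since all the work has been carried out in the proof of Theorem~\ref{thm:borelfullgroups}; the only thing to check is that we are entitled to use it, i.e.\ that the corollary's hypotheses (Polish group $G$, essentially free, measure preserving, Borel action on a standard probability space) match the theorem's. They do verbatim. Thus the proof reduces to a single sentence, and I would write it as follows.

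\begin{proof}
Apply Theorem \ref{thm:borelfullgroups} with $H=G$: since $G$ is trivially a Borel subgroup of itself, the implication \textit{1}$\impl$\textit{2} gives that $[G]_D$ is a Borel subgroup of $\Aut(X,\mu)$.
\end{proof}
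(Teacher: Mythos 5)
Your proposal is correct and matches the paper's intent exactly: the paper calls this corollary a ``straightforward'' consequence of Theorem~\ref{thm:borelfullgroups}, and the intended argument is precisely your specialization $H=G$ together with the implication \textit{1}$\impl$\textit{2}. Nothing further is needed.
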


\begin{qu}Can one remove the freeness assumption from the previous corollary?
\end{qu}

\begin{cor}\label{cor:nopol}There exists non Polishable ergodic full groups.
\end{cor}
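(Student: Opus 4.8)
The plan is to combine Corollary \ref{cor:ergopolishborel}, which says that a Polishable ergodic full group must be a Borel subset of $\Aut(X,\mu)$ for the weak topology, with Theorem \ref{thm:borelfullgroups}, which for essentially free measure-preserving actions characterizes Borelness of the full group in terms of Borelness of the underlying subgroup. It therefore suffices to produce a Polish group $G$ acting essentially freely, ergodically and in a measure-preserving Borel way on $(X,\mu)$, together with a subgroup $H\leq G$ that is \emph{not} Borel in $G$ but still acts ergodically. Indeed, for such data the full group $[\mathcal R_H]$ (and likewise $[H]_D$) is ergodic, while the implication $[\mathcal R_H]$ Borel $\Rightarrow H$ Borel of Theorem \ref{thm:borelfullgroups} forces $[\mathcal R_H]$ to be non-Borel; hence $[\mathcal R_H]$ cannot be Polishable by Corollary \ref{cor:ergopolishborel}.

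I would take $G=\mathbb R$. As recalled after Theorem \ref{thm:polishfullgroupsintro}, every Polish locally compact group admits a free ergodic measure-preserving action (Proposition 1.2 of \cite{MR1250814}), so I fix such an action of $\mathbb R$ on $(X,\mu)$; being free, it is in particular essentially free. For the subgroup, I choose a Hamel basis of $\mathbb R$ over $\mathbb Q$, single out one basis vector $e_0$, let $\phi\colon\mathbb R\to\mathbb Q$ be the associated coordinate $\mathbb Q$-linear functional, and set $H:=\ker\phi$. Then $H$ is a $\mathbb Q$-subspace of countable index ($\mathbb R/H\cong\mathbb Q$); it contains $\mathbb Q e_1$ for any other basis vector $e_1$, hence it is dense; and it is non-measurable. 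Indeed, a measurable subgroup of $\mathbb R$ of positive measure equals $\mathbb R$ by Steinhaus' theorem, so a measurable proper subgroup is null, which is impossible here since countably many translates of $H$ cover $\mathbb R$. In particular $H$ is not Borel.

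It remains to see that $H$ acts ergodically. The measure-preserving action of $\mathbb R$ induces a continuous homomorphism into $(\Aut(X,\mu),w)$, so the map $g\mapsto\mu(A\,\Delta\, gA)$ is continuous for every $A\in\MAlg(X,\mu)$. If $A$ is $H$-invariant, this continuous map vanishes on the dense subgroup $H$, hence on all of $\overline H=\mathbb R$, so $A$ is $\mathbb R$-invariant and therefore null or co-null by ergodicity of the $\mathbb R$-action. Thus $H$ is ergodic, so $[\mathcal R_H]$ is an ergodic full group which is not Borel, and hence not Polishable by Corollary \ref{cor:ergopolishborel}, proving Corollary \ref{cor:nopol}.

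The one delicate point is the ergodicity of the restricted action of the non-Borel subgroup $H$: one cannot invoke any topology on $H$ itself (it carries none making it Polish), and must instead transfer ergodicity from the ambient group $\mathbb R$ using density together with the continuity of $\mathbb R\act\MAlg(X,\mu)$. Everything else is routine: the non-measurability of $H$ is the classical kernel-of-a-discontinuous-additive-functional example, and the passage from non-Borelness to non-Polishability is simply the contrapositive of Corollary \ref{cor:ergopolishborel}.
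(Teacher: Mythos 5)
Your proof is correct and follows essentially the same route as the paper: a non-measurable (hence non-Borel) dense subgroup of countable index built from a Hamel basis, combined with Theorem \ref{thm:borelfullgroups} and the contrapositive of Corollary \ref{cor:ergopolishborel}. The only cosmetic differences are that the paper uses $\mathbb S^1$ acting on itself (deducing ergodicity of $H$ from the fact that it contains an irrational rotation) where you use an abstract free ergodic action of $\mathbb R$ with a density-plus-continuity argument for ergodicity; both are valid.
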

\begin{proof}
Consider the free action of the circle $\mathbb S^1$ onto itself by translation. Let $H\leq \mathbb S^1$ be a non Borel subgroup which still acts ergodically. Then by Theorem \ref{thm:borelfullgroups} both $[H]_D$ and $[\mathcal R_H]$ will be non Borel full groups, and so by Corollary \ref{cor:ergopolishborel}, they cannot have a Polish group topology.

The existence of such an $H$ is a well-known consequence of the axiom of choice. Consider $\R$ as a $\Q$-vector space, and let $\tilde H$ be a hyperplane containing $\Q$. The subgroup $H:=\tilde H/\Z\leq \R/\Z=\mathbb S^1$ is a proper subgroup of $\mathbb S^1$ with countable infinite index, which acts ergodically because it contains an irrational. Such a subgroup $H$ can not be Lebesgue-measurable. Indeed, since $\mathbb S^1$ is covered by countably many translates of $H$, $H$ has non zero-measure. But then, $H$ must have finite-index, a contradiction.
\end{proof}

\section{Character rigidity for full groups}
In this section, we use a result of Dudko concerning characters of the full group of the tail equivalence relation $\mathcal R_0$ to classify characters of ergodic full groups admitting a Polish group topology. Recall that $\mathcal R_0$ is the countable pmp equivalence relation on $X=\{0,1\}^\N$ equipped with the product measure $\mu=(1/2\delta_0+1/2\delta_1)^{\otimes\N}$, defined by 
$$(x_n)\RR_0(y_n)\iff \exists p\in\N\,|\, \forall n\geq p, x_n=y_n.$$

\begin{thm}\label{thm:descrichar} For an ergodic Polish full group $\mathbb G$ we have the following dichotomy:
\begin{enumerate}
\item either $\mathbb G$ is the full group of a countable pmp equivalence relation, and all its continuous characters are (possibly infinite) convex combinations of the trivial character $\chi_0\equiv1$ and the characters $\{\chi_k\}_{k\geq1}$ given by 
$$\chi_k(g):=\mu(\{x\in X: g\cdot x=x\})^k,$$ 
\item  or $\mathbb G$ does not have any non trivial continuous character. 
\end{enumerate}
\end{thm}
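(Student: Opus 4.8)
```latex
\textbf{Proof plan.}
The plan is to separate the two horns of the dichotomy and handle them with quite different tools. For the first horn, assume $\mathbb G=[\mathcal R]$ for a countable pmp ergodic equivalence relation $\mathcal R$. Here I would invoke Dudko's classification of the characters of the full group $[\mathcal R_0]$ of the hyperfinite ergodic equivalence relation, which says precisely that every continuous character is a convex combination of the $\chi_k$ and $\chi_0$. The key reduction is that \emph{every} ergodic countable pmp equivalence relation $\mathcal R$ has the same continuous character theory as $\mathcal R_0$: by the Connes--Feldman--Weiss / Dye machinery one can realize $[\mathcal R_0]$ as a subgroup of $[\mathcal R]$ in a suitably compatible way, or more directly one notes that the functions $\chi_k$ are defined intrinsically in terms of the measure of the fixed-point set, so they make sense for any $\mathcal R$. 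I would argue that the formulas $\chi_k(g)=\mu(\{x:g\cdot x=x\})^k$ do define characters of $[\mathcal R]$ (conjugacy-invariance is clear since $\mu$ is preserved; positive-definiteness comes from realizing $\chi_k$ as the trace of the Koopman-type representation on the $k$-fold symmetric or tensor power of an associated Hilbert space), and then that Dudko's argument, which only uses the approximation of $[\mathcal R]$ by finite subequivalence relations together with Proposition \ref{prop:transsamemeas}, transports verbatim to show these exhaust the continuous characters.

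For the second horn, assume $\mathbb G=[\mathcal R_G]$ is \emph{not} the full group of a countable pmp equivalence relation. By Proposition \ref{prop:polishfordu} this is exactly the case where the uniform topology is non-separable, equivalently by the dichotomy of Corollary \ref{cor:dicholc} (in the locally compact case) the case where condition $(vii)$ of Theorem \ref{thm:aperdense} holds, so that the aperiodic elements are dense in $\mathbb G$. The strategy is to show that any continuous character $\chi$ must be constant equal to $\chi(1_G)=1$. The main mechanism is the density of the conjugacy class of every aperiodic element (Theorem \ref{thm:aperdense}$(ii)$): a character is conjugacy-invariant and continuous, hence constant on the closure of each conjugacy class, so $\chi$ takes a single value $c$ on the dense set of aperiodic elements and therefore $\chi\equiv c$ by continuity. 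It then remains to force $c=1$. For this I would use that the GNS representation attached to a character factors through the finite von Neumann algebra generated by $\mathbb G$; a nontrivial character would give a nontrivial conjugation-invariant trace vector, and I would derive a contradiction by exhibiting, for aperiodic $T$, approximations showing $\chi(T)=\chi(1)$ forces $\chi(T)$ to be simultaneously $1$ and $\lvert\chi(T)\rvert<1$ unless $\chi$ is trivial --- concretely, using that $T$ and $T^2$ (both aperiodic, hence conjugate-dense to any fixed aperiodic element) must receive the same character value while multiplicativity-type inequalities for positive-definite functions bound $\lvert\chi(T)\rvert$.

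The hard part will be the second horn, specifically ruling out nontrivial \emph{constant-on-aperiodics} characters. Density of conjugacy classes immediately gives that $\chi$ is constant on aperiodic elements, but pinning that constant to $1$ requires genuinely using the structure of the representation rather than just conjugacy-invariance. I expect the cleanest route is through the finite von Neumann algebra picture: a continuous character corresponds to a finite-trace representation, and I would show that the weak density of $\mathbb G$ in $\Aut(X,\mu)$ (Proposition \ref{prop:weaklydenseerg}) together with the absence of a separable uniform topology forces any such trace to be supported on the scalars, i.e.\ the von Neumann algebra generated by the image is trivial. An alternative, more self-contained obstacle-avoiding route is to show directly that condition $(vii)$ lets one write any element as a product of two conjugates of a single aperiodic element, so that $\chi(g)=\chi(1)^2/\chi(1)=\chi(1)$ forces $\chi$ to be multiplicative-trivial; verifying such a factorization uniformly is the delicate combinatorial point I would need to nail down.
```
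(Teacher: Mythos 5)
The main gap is in your second horn. Your strategy there needs the implication ``$\mathbb G$ is not the full group of a countable pmp equivalence relation $\Rightarrow$ the conjugacy class of every aperiodic element is dense,'' i.e.\ that the failure of case (1) forces the equivalent conditions of Theorem \ref{thm:aperdense}. The paper only establishes that dichotomy for \emph{orbit full groups of locally compact groups} (Corollary \ref{cor:dicholc}); for a general Polish group action the core can have measure zero (so condition $(vii)$ fails and aperiodic elements are \emph{not} dense near the identity) while $[\mathcal R_G]$ is still not known to be a countable pmp full group, because the step passing from ``$[G]_D$ is countably generated'' to ``$\mathcal R_G$ is countable'' uses Proposition \ref{prop:oeincl}, which requires local compactness. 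Worse, the theorem is stated for an arbitrary ergodic \emph{Polish full group}, not necessarily an orbit full group, so Theorem \ref{thm:aperdense} is not even applicable as stated. (A smaller remark: your worry about pinning the constant $c$ to $1$ is moot --- if the conjugacy class of an aperiodic $T$ is dense and $\chi$ is continuous and conjugacy-invariant, then $\chi$ is constant on all of $\mathbb G$, and normalization $\chi(1)=1$ gives $c=1$ immediately; no von Neumann algebra argument or factorization into conjugates is needed.)

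The paper avoids the case split altogether. It first proves that \emph{every} character of \emph{any} ergodic full group that is continuous for the uniform topology is of the form $\sum_k\alpha_k\chi_k$: one conjugates so that $[\mathcal R_0]\subset\mathbb G$, applies Dudko's theorem (Theorem \ref{thm:charahyp}) on that subgroup, and extends to all of $\mathbb G$ using that finite-order elements are uniformly dense (Rokhlin) and, by ergodicity, each is conjugate into $[\mathcal R_0]$ --- this is close to, but more precise than, your ``transport Dudko's argument verbatim.'' Then, since any Polish group topology $\tau$ on $\mathbb G$ is weaker than the uniform topology (Theorem \ref{thm:atmostonepolishergo}), every $\tau$-continuous character is uniformly continuous, hence of the above form. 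The key observation you are missing is that $\chi_k(T)=(1-d_u(T,\mathrm{id}_X))^k$, so a \emph{nontrivial} character of this form detects uniform convergence: $\chi(T_n)\to1$ iff $d_u(T_n,\mathrm{id}_X)\to0$. Hence the existence of a nontrivial $\tau$-continuous character forces $\tau$ to refine, and therefore equal, the uniform topology, and Proposition \ref{prop:polishfordu} then puts $\mathbb G$ in case (1). This derivation of the dichotomy from the character itself is what makes the proof work in full generality; your approach would at best prove the theorem for orbit full groups of locally compact groups.
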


The result of Dudko that we use may be stated as follows.

\begin{thm}[\cite{MR2813475}]\label{thm:charahyp}
Let $\chi$ be a continuous character of the full group of the tail equivalence relation $\mathcal R_0$. Then there is a unique sequence non negative coefficients $(\alpha_k)_{k\geq 0}$ such that $\sum_{k=0}^{+\infty} \alpha_k=1$ and $\chi=\sum_{k=0}^{+\infty} \alpha_k\chi_k.$
\end{thm}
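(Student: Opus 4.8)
The plan is to treat three ingredients separately: exhibiting the $\chi_k$ as continuous characters, proving uniqueness of the coefficients, and proving that every continuous character has the asserted form. The first is soft. Writing $R=L(\mathcal R_0)$ for the von Neumann algebra of the equivalence relation, a hyperfinite $\mathrm{II}_1$ factor with trace $\tau$, the full group embeds as unitaries $T\mapsto u_T\in\mathcal U(R)$ with $\tau(u_T)=\mu(\{x:T(x)=x\})$, so $\chi_1(T)=\tau(u_T)$. Taking the $k$-fold tensor power representation $T\mapsto u_T^{\otimes k}\in\mathcal U(R^{\otimes k})$ with trace $\tau^{\otimes k}$ gives $\chi_k(T)=\tau(u_T)^k=\mu(\{x:T(x)=x\})^k$, while $\chi_0\equiv1$ is the trivial character. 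Each $\chi_k$ is continuous for the uniform topology, since $T\mapsto\mu(\{x:T(x)=x\})$ is $1$-Lipschitz for $d_u$ (the symmetric difference of the fixed-point sets of $T$ and $S$ is contained in $\{x:T(x)\neq S(x)\}$) and $t\mapsto t^k$ is Lipschitz on $[0,1]$. Hence any pointwise-convergent convex combination $\sum_k\alpha_k\chi_k$ is again a continuous character.

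For uniqueness I would use ergodicity. For each $t\in[0,1]$, Proposition \ref{prop:transsamemeas} lets us choose disjoint sets $A,B$ with $\mu(A)=\mu(B)=(1-t)/2$ and an involution $U_t\in[\mathcal R_0]$ supported exactly on $A\cup B$ and swapping them, so that $\mu(\{x:U_t(x)=x\})=t$ and $\chi_k(U_t)=t^k$. If $\chi=\sum_k\alpha_k\chi_k=\sum_k\beta_k\chi_k$, then $\sum_k\alpha_k t^k=\chi(U_t)=\sum_k\beta_k t^k$ for every $t\in[0,1]$; two power series that agree on $[0,1]$ have equal Taylor coefficients, whence $\alpha_k=\beta_k$ for all $k$.

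The substance is existence. The starting point is the approximately finite structure: $\mathcal R_0=\bigcup_n\mathcal R_0^{(n)}$, where $\mathcal R_0^{(n)}$ identifies sequences agreeing from coordinate $n$ on, so that $G_n:=[\mathcal R_0^{(n)}]\cong\LL^0(Y_n,S_{2^n})$ (with $Y_n=\{0,1\}^{\{n,n+1,\dots\}}$ the tail space) and $\bigcup_n G_n$ is dense in $[\mathcal R_0]$ for the uniform topology. A continuous character $\chi$ therefore restricts to a compatible sequence $\chi|_{G_n}$, compatible under the doubling inclusions $G_n\hookrightarrow G_{n+1}$ which, after the splitting $Y_n=\{0,1\}\times Y_{n+1}$, send $\sigma\mapsto(y'\mapsto\sigma_{(0,y')}\oplus\sigma_{(1,y')})$ into the block-diagonal $S_{2^n}\times S_{2^n}\hookrightarrow S_{2^{n+1}}$. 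The plan is to classify the continuous characters of the measurable current groups $\LL^0(Y,S_m)$ and then to determine which compatible families survive in the limit. Concretely I would use the ergodic (central-measure) method of Vershik--Kerov: continuous characters of the inductive limit correspond to central measures on the path space of the Bratteli diagram of the doubling tower, extremal ones to ergodic central measures; a branching-rule computation should then show that the surviving invariant is a single nonnegative integer $k$, heuristically the number of ``marked independent copies'' of the space, whose character is exactly $\chi_k$.

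The main obstacle is precisely this classification. Among all Thoma-type parameters available at each finite level (the characters of $\LL^0(Y,S_m)$ are far more numerous than the restrictions of the $\chi_k$ — one already gets extra ones like $f\mapsto\int_Y\mathrm{sgn}(f(y))\,d\mu(y)$ from the sign representation of $S_m$), one must show that the compatibility forced by the doubling inclusions together with the non-atomicity of $Y$ annihilates everything except the fixed-point parameter. Once the extreme continuous characters are identified as exactly $\{\chi_k\}_{k\geq0}$, I would conclude by a barycentric argument: the continuous characters form a Choquet simplex (tracial states on the associated $C^*$-algebra), and any continuous $\chi$ is the barycenter of a probability measure on the closure of the extreme set. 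The only point added by this closure is the regular character $\chi_r(T)=\mathbf 1_{\{T=\mathrm{id}\}}=\lim_k\chi_k$ (pointwise), which is discontinuous at the identity because $[\mathcal R_0]$ contains involutions of arbitrarily small support. Continuity of $\chi$ therefore forces the representing measure to give no mass to $\chi_r$, hence to be purely atomic on $\{\chi_k\}$, yielding $\chi=\sum_k\alpha_k\chi_k$ with $\alpha_k\geq0$ and $\sum_k\alpha_k=1$; combined with the uniqueness step this completes the proof.
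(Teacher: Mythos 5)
First, a structural point: the paper does not actually prove this statement --- Theorem \ref{thm:charahyp} is imported verbatim from Dudko \cite{MR2813475}, so there is no internal proof to compare against. Your proposal is therefore measured against Dudko's argument, whose strategy the paper itself sketches in the introduction: restrict to a \emph{countable} uniformly dense subgroup of $[\mathcal R_0]$ (the inductive limit $\bigcup_m S_{2^m}$ of dyadic permutation groups under the doubling embeddings), invoke the known classification of the countably many extremal characters of that discrete group, and determine which of them extend continuously. Your peripheral steps are correct: realizing $\chi_k$ as $\tau^{\otimes k}(u_T^{\otimes k})$ together with the $1$-Lipschitz continuity of $T\mapsto\mu(\{x: Tx=x\})$ for $d_u$ is fine, and your uniqueness argument via involutions $U_t$ with $\mu(\{x:U_t(x)=x\})=t$ plus the identity theorem for power series is complete (Proposition \ref{prop:transsamemeas} does supply such involutions).

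The existence half, however --- which is the entire content of Dudko's theorem --- is left unproved. You explicitly defer ``the main obstacle,'' namely the classification of compatible families of characters along the doubling tower of groups $\LL^0(Y_n,S_{2^n})$, to an unexecuted ``branching-rule computation.'' This is not a routine verification: working with the uncountable groups $\LL^0(Y_n,S_{2^n})$ rather than the countable dense subgroup makes the finite-level character theory genuinely harder, as your own sign-character example $f\mapsto\int_Y\mathrm{sgn}(f(y))\,d\mu(y)$ shows, and you give no mechanism that kills these extra parameters in the limit. Moreover, your concluding barycentric step is unsound as stated: the paper itself warns that the continuous characters of a Polish group need not form a Choquet simplex (the example $\LL^0(X,\mu,\mathbb S^1)$ is cited there), and ``tracial states on the associated $C^*$-algebra'' parametrize \emph{all} characters of the discrete group, not only the continuous ones --- so a continuous $\chi$ could a priori have its extremal decomposition supported on discontinuous extremal characters, and continuity of $\chi$ alone does not force the representing measure off that set. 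This is precisely the difficulty that Dudko's dense-countable-subgroup device circumvents: the characters of the countable group $\bigcup_m S_{2^m}$ do form a simplex with countably many extreme points, and a uniformly continuous character of $[\mathcal R_0]$ is determined by its restriction there. In short: correct soft ingredients and a correct uniqueness proof, but the core classification and the decomposition argument both remain genuine gaps.
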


Our result will follow from the description of the uniformly continuous characters of any ergodic full group, which was also observed by Gaboriau and Medynets (private communication). 

\begin{prop} Let $\mathbb G$ be an ergodic full group and let $\chi$ be a character of $\mathbb G$, continuous for the uniform topology. Then there is a unique sequence of non negative coefficients $(\alpha_k)_{k\geq 0}$ such that
$\chi=\sum_{k=0}^{+\infty} \alpha_k\chi_k.$
\end{prop}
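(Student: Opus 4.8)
The plan is to reduce the statement to Dudko's theorem (Theorem~\ref{thm:charahyp}) by exhibiting, inside the ergodic full group $\mathbb G=[\mathcal R]$, a copy of the full group $[\mathcal R_0]$ of the hyperfinite ergodic equivalence relation, and then transferring the resulting formula from this copy to all of $\mathbb G$ via conjugacy-invariance and a density argument. Throughout, write $p(g):=\mu(\{x\in X:\ g\cdot x=x\})$, so that $\chi_k(g)=p(g)^k$ and any admissible combination is of the form $g\mapsto F(p(g))$ with $F(t):=\sum_{k\geq0}\alpha_k t^k$.

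First I would produce the copy of $[\mathcal R_0]$. Since $\mathbb G$ is ergodic, a standard back-and-forth construction based on Proposition~\ref{prop:transsamemeas} yields an ergodic hyperfinite sub-equivalence relation $\mathcal S\subseteq\mathcal R$ (for instance the orbit relation of an ergodic transformation of $\mathbb G$); by Dye's theorem $\mathcal S\cong\mathcal R_0$, so $[\mathcal S]\cong[\mathcal R_0]$ sits inside $\mathbb G$ as a subgroup. The restriction $\chi|_{[\mathcal S]}$ is still a character (positive type and conjugacy-invariance are inherited, normalization is kept), and it is continuous for the uniform topology because the uniform metric of $[\mathcal S]$ is the restriction of $d_u$ on $\mathbb G$. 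Theorem~\ref{thm:charahyp} thus provides nonnegative coefficients $(\alpha_k)_{k\geq0}$ with $\sum_k\alpha_k=1$ and $\chi(h)=F(p(h))$ for every $h\in[\mathcal S]$.

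Next I would transfer this to all periodic elements of $\mathbb G$. The key structural input is that in an ergodic full group the conjugacy class of a periodic element is determined by its period distribution $n\mapsto\mu(\{x:\ g\text{ has period exactly }n\})$, and that $[\mathcal S]$, being ergodic, realises every period distribution (again through Proposition~\ref{prop:transsamemeas}). Hence any periodic $g\in\mathbb G$ is conjugate \emph{inside} $\mathbb G$ to some $h\in[\mathcal S]$ with the same period distribution, and in particular with $p(g)=p(h)$; conjugacy-invariance then gives $\chi(g)=\chi(h)=F(p(g))$.

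Finally I would drop periodicity. Periodic elements are dense in $\mathbb G$ for the uniform topology, while both $\chi$ (by hypothesis) and $g\mapsto F(p(g))$ are uniformly continuous: the map $g\mapsto p(g)$ is $1$-Lipschitz for $d_u$, since $\mathrm{Fix}(g)\triangle\mathrm{Fix}(h)\subseteq\{x:\ g\cdot x\neq h\cdot x\}$, and $F$ is continuous on $[0,1]$ by the Weierstrass $M$-test. Two uniformly continuous functions that agree on a uniformly dense set coincide, so $\chi=\sum_k\alpha_k\chi_k$ on all of $\mathbb G$. Uniqueness of $(\alpha_k)$ is automatic: by ergodicity $p$ takes every value in $[0,1]$, so $\chi$ determines $F$ on $[0,1]$, and the coefficients of a power series with $\sum_k\alpha_k<\infty$ are determined by the function. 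The main obstacle is not any single estimate but rather justifying the two structural facts in the generality of an \emph{arbitrary} ergodic full group (rather than a countable-pmp one): the uniform density of periodic elements and the classification of their conjugacy classes by period distribution; both should follow from Proposition~\ref{prop:transsamemeas} together with stability under cutting and pasting, exactly as in the countable case.
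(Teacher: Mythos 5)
Your proposal is correct and follows essentially the same route as the paper: embed a copy of $[\mathcal R_0]$ into $\mathbb G$ (the paper cites \cite[Thm. 2.19]{lemai2014} for this), apply Dudko's theorem to the restriction of $\chi$, use ergodicity to conjugate every finite-order (periodic) element of $\mathbb G$ into that copy, and conclude by Rokhlin density of such elements together with uniform continuity of both sides. The extra details you supply (the $1$-Lipschitz bound for $g\mapsto p(g)$, the Weierstrass argument for $F$, and the uniqueness of the coefficients) are correct elaborations of steps the paper leaves implicit.
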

\begin{proof}
It is a standard fact that up to conjugating, we may assume that $[\mathcal R_0]\subset \mathbb G$ (for a proof in the general setting of ergodic full groups, see \cite[Thm. 2.19]{lemai2014}). 
Let $\chi$ be a character of $\mathbb G$ continuous for the uniform topology. By Theorem \ref{thm:charahyp}, there is a sequence of non negative coefficients $(\alpha_k)_{k\geq 0}$ such that $\chi(g)=\sum_{k=0}^{+\infty} \alpha_k\chi_{k}(g)$ for all $g\in[\mathcal R_0]$. By Rokhlin's Lemma, the set $\mathcal F$ of elements of finite order of $\mathbb G$ is uniformly dense in $\mathbb G$. Since $\mathbb G$ is ergodic, every element of $\mathcal F$ is conjugate inside $\mathbb G$ to an element of $[\mathcal R_0]$. By definition $\chi$ is conjugacy-invariant, so $\chi(g)=\sum_{k=0}^{+\infty} \alpha_k\chi_{k}(g)$ for all $g\in \mathcal F$ and hence for any $g\in\mathbb G$ by continuity.
\end{proof}

\begin{proof}[Proof of theorem \ref{thm:descrichar}]
Let $\mathbb G$ be an ergodic full group equipped with a Polish topology $\tau$, and let $\chi$ be a $\tau$-continuous non-trivial character of $\mathbb G$. By Theorem \ref{thm:atmostonepolishergo}, $\tau$ is weaker than the uniform topology. So the character $\chi$ is continuous for the uniform topology and, by the previous proposition, there exists a sequence of non negative coefficients $(\alpha_k)_{k\geq 0}$ such that
$\chi=\sum_{k=0}^{+\infty} \alpha_k\chi_k$.

For all $T\in\Aut(X,\mu)$ and all $k\geq 1$, we have $\chi_k(T)=(1-d_u(T,\mathrm{id}_X))^k$, so $\chi_k(T_n)\to 1$ if and only if $d_u(T_n,\mathrm{id}_X)\to 0$. Since $\chi$ is not the trivial character, we deduce that $\chi(T_n)\to 1$ if and only if $d_u(T_n,\mathrm{id}_X)\to 0$. When $T_n$ converges to $\mathrm{id}_X$ for $\tau$, we have $\chi(T_n)\to 1$, which implies that $T_n$ converges to $\mathrm{id}_X$ in the uniform topology. We deduce that $\tau$ refines the uniform topology, hence they are equal. Since $\tau$ is Polish, we can conclude from Proposition \ref{prop:polishfordu} that $\mathbb G$ is the full group of a countable pmp equivalence relation. 
\end{proof}

\section{Perspectives}
\subsection{Orbit full groups for locally compact unimodular groups}\label{perspulc}
A forthcoming paper \cite{lcfullgroups} will be devoted to the study of full groups associated to measure-preserving actions of Polish locally compact unimodular groups. The existence of discrete sections for such actions plays a crucial role in our work. 

The first result is a locally compact version of \cite[Thm. 5.7]{MR2311665}. 

\begin{thm}Let $G$ be a Polish locally compact unimodular group acting ergodically and essentially freely on a standard probability space $(X,\mu)$. Then the following assertions are equivalent.
\begin{enumerate}[(i)]
\item $G$ is amenable.
\item $[\mathcal R_G]$ is extremely amenable.
\end{enumerate}
\end{thm}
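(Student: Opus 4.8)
The plan is to prove the two implications separately, reducing everything to the countable case treated by Giordano and Pestov \cite{MR2311665} by means of a discrete cross-section of the action. Recall that for a free measure-preserving action of a Polish locally compact \emph{unimodular} group $G$ there is a Borel cross-section $Y\subseteq X$ meeting each orbit in a set that is countable and discrete for the topology of the orbit $\cong G$; unimodularity guarantees that the transverse measure on $Y$ is invariant under the induced \emph{cross-section equivalence relation} $\mathcal S:=\mathcal R_G\cap(Y\times Y)$, so that (after normalization) $\mathcal S$ is a countable probability measure preserving equivalence relation. The first key point, which I would establish through Zimmer's amenability theory together with the theorem of Connes--Feldman--Weiss, is the chain of equivalences
\[
G\text{ is amenable}\iff \mathcal R_G\text{ is an amenable equivalence relation}\iff \mathcal S\text{ is hyperfinite}.
\]
Here the first equivalence uses that the action is probability measure preserving: an amenable action admits equivariant fields into $\mathcal P(K)$ for every compact $G$-space $K$, and integrating such a field against the invariant measure $\mu$ produces a $G$-invariant measure on $K$, forcing $G$ to be amenable.

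For the implication (i)$\Rightarrow$(ii), I would show that $[\mathcal R_G]$ is a L\'evy group, hence extremely amenable by the Gromov--Milman theorem. When $G$ is amenable the relation $\mathcal S$ is hyperfinite, so $\mathcal S=\bigcup_n\mathcal S_n$ with $\mathcal S_n$ finite and class sizes tending to infinity. Via the cross-section each $\mathcal S_n$ tiles almost every orbit into finitely many pieces, and the finite symmetric groups permuting these pieces by the canonical $G$-translations give an increasing sequence of subgroups $K_n\le[\mathcal R_G]$ whose union is dense for the topology of convergence in measure. The crucial point is concentration of measure: equipping each $K_n$ with its normalized invariant measure, I would show that for the metric $d_{[\mathcal R_G]}$ of Proposition \ref{prop:dRG} the spaces $(K_n,d_{[\mathcal R_G]})$ form a L\'evy family. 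This rests on Maurey's concentration inequality for symmetric groups with the Hamming metric, integrated over the space of orbits exactly as in the countable setting, while the ``continuous'' direction coming from translations inside $G$ is controlled using that an amenable $G$ admits a F\o lner exhaustion, so that these directions do not destroy concentration. \textbf{This concentration estimate in the continuous model is the main obstacle}: unlike the uniform topology of the countable case, the ambient topology here is convergence in measure, the approximating subgroups are only ``approximately invariant'' along the tiling rather than compact, and verifying both the L\'evy property and the density of $\bigcup_n K_n$ is where the real work lies.

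For the converse (ii)$\Rightarrow$(i) I would argue by contraposition, using that $G$ embeds as a closed subgroup of $[\mathcal R_G]$ by Theorem \ref{thm:polishfullgroups}, so that in particular every translation $L_g\colon x\mapsto g\cdot x$ lies in $[\mathcal R_G]$. Assume $G$ is not amenable; then $\mathcal R_G$ is not amenable, so there is a Borel field $x\mapsto Q_x$ of nonempty compact convex subsets of the unit ball of a separable Banach space, together with an affine cocycle $(\alpha_{x,y})_{(x,y)\in\mathcal R_G}$, admitting \emph{no} measurable equivariant selection. The group $[\mathcal R_G]$ acts affinely on a suitable weak-$*$ compact convex space of measurable sections $x\mapsto s_x\in Q_x$ by $(T\cdot s)_x:=\alpha_{x,T^{-1}x}(s_{T^{-1}x})$, and such a section is a fixed point precisely when it is $\mathcal R_G$-equivariant; indeed invariance under all $L_g$ already forces full equivariance since $G$ generates $\mathcal R_G$. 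An extremely amenable, hence amenable, group fixes a point in every continuous affine action on a compact convex set, so if $[\mathcal R_G]$ were extremely amenable we would obtain an equivariant selection, contradicting non-amenability of $\mathcal R_G$. The delicate point is continuity of this affine action for the topology of convergence in measure: a merely measurable section need not vary continuously along orbits, so I would first reduce, via the cross-section, to fields and cocycles that are continuous along orbits --- equivalently, to the discrete model on $Y$, where the argument is exactly that of \cite{MR2311665} --- and only then invoke the fixed-point property.

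In summary, both directions funnel through the cross-section dictionary relating $G$, the relation $\mathcal R_G$, and the countable relation $\mathcal S$. Modulo that dictionary, (ii)$\Rightarrow$(i) is a soft fixed-point argument, whereas (i)$\Rightarrow$(ii) demands the genuinely hard concentration-of-measure analysis adapting the L\'evy group construction of Giordano and Pestov from the uniform topology to the topology of convergence in measure.
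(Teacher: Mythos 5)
First, a point of order: the paper does not prove this theorem. It appears in the ``Perspectives'' section as an announcement, with the proof explicitly deferred to the forthcoming paper \cite{lcfullgroups}; the only hint given is that ``the existence of discrete sections for such actions plays a crucial role.'' There is therefore no proof in this text to compare yours against. Your architecture --- a cross-section dictionary relating $G$, $\mathcal R_G$ and the countable pmp relation $\mathcal S$ on a discrete section, the L\'evy-group route for (i)$\Rightarrow$(ii), and a Zimmer-amenability fixed-point argument for (ii)$\Rightarrow$(i) --- is the natural adaptation of \cite{MR2311665} and is consistent with the authors' hint, so the strategy is almost certainly the right one.

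That said, what you have written is a plan rather than a proof, and the two places you flag as ``the real work'' are exactly where the theorem lives. For (i)$\Rightarrow$(ii), the concentration estimate is probably the easier half: by Proposition \ref{prop:dRG} the metric $d_{[\mathcal R_G]}$ is dominated by the uniform metric, which on a group of tile permutations is a weighted Hamming metric, so Maurey's inequality transfers directly (concentration only improves when the metric decreases). The genuinely open point is the density of $\bigcup_n K_n$ in the topology of convergence in measure: unlike the countable case, $[\mathcal R_G]$ contains elements --- e.g.\ translations by $g\in G$ close to $1_G$ --- that are near the identity in measure yet move every point, and permutations of a fixed tiling cannot approximate these unless the tiles also shrink in the $G$-direction; this is where amenability of $G$ (a F\o lner tiling compatible with the cross-section) must enter quantitatively, and nothing in your sketch produces it. For (ii)$\Rightarrow$(i), the fixed-point argument requires the affine action on the space of sections to be continuous for the topology of convergence in measure, and ``reduce to fields continuous along orbits'' is a wish rather than an argument; moreover the passage from invariance under each $L_g$ (which holds for a.e.\ $x$ for each fixed $g$) to genuine $\mathcal R_G$-equivariance needs a Fubini argument over Haar measure of the kind carried out in Proposition \ref{prop:oeincl}, which you should make explicit. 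In short: correct strategy, honest identification of the obstacles, but the obstacles are not overcome, so this cannot yet be counted as a proof.
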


Our second result is devoted to the topological rank\footnote{Recall that the \textbf{topological rank} of a topological group is the minimal number of elements needed to generate a dense subgroup.} of such full groups. For these full groups the situation much tamer than in the discrete case, which was described in \cite{gentopergo}.
\begin{thm}Let $G$ be a non discrete Polish locally compact unimodular group acting ergodically and freely on a standard probability space $(X,\mu)$. Then $[\mathcal R_G]$ has  topological rank $2$.

\end{thm}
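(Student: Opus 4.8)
The plan is to prove the two inequalities $\mathrm{rank}([\mathcal R_G])\geq 2$ and $\mathrm{rank}([\mathcal R_G])\leq 2$ separately.

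For the lower bound, I would argue that $[\mathcal R_G]$ cannot be monothetic. If a topological group is topologically generated by a single element $g$, then the cyclic group $\langle g\rangle$ is dense and abelian, so by continuity of the multiplication its closure --- the whole group --- is abelian. But $[\mathcal R_G]$ is ergodic, hence simple (Fathi's theorem), and in particular non-abelian since it contains non-commuting involutions by Proposition \ref{prop:transsamemeas}. Therefore no single element can topologically generate $[\mathcal R_G]$, and the topological rank is at least $2$.

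For the upper bound, I would show that the set
$$\mathcal D:=\{(a,b)\in[\mathcal R_G]^2:\overline{\langle a,b\rangle}=[\mathcal R_G]\}$$
is comeager, hence in particular nonempty. Writing $(V_n)_n$ for a countable basis of the topology of convergence in measure, density of a subgroup is the conjunction over $n$ of the open conditions ``$\langle a,b\rangle$ meets $V_n$'', so $\mathcal D$ is a $G_\delta$; the content is to show each of these conditions is dense. The starting point is that, since $G$ is non-discrete and acts freely, condition $(vii)$ of Theorem \ref{thm:aperdense} holds, hence so do all of $(i)$--$(vi)$; in particular, by $(vi)$, a generic pair already freely generates a copy of $\mathbb F_2$ inside $[\mathcal R_G]$, which serves as the base for perturbation arguments. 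The real point is to upgrade \emph{free} generation to \emph{dense} generation, i.e. to show that a generic pair can approximate, in measure, an arbitrary target $T\in[\mathcal R_G]$. Here the crucial tool is the existence of a discrete (cross-)section $Y\subseteq X$ for the action, which --- using unimodularity --- carries an invariant probability measure $\nu$ for which the restricted relation $\mathcal R_Y=\mathcal R_G\cap(Y\times Y)$ is a countable ergodic pmp equivalence relation. Through the associated suspension picture, every $T\in[\mathcal R_G]$ decomposes into a ``combinatorial'' part living on $\mathcal R_Y$ and a ``continuous displacement'' part along the $G$-direction. I would use one generator to realize a rich ergodic transformation of the section (together with enough moves to topologically generate $[\mathcal R_Y]$ in the restricted topology) and the other to realize the flow, then run a back-and-forth approximation to build, as words in $a$ and $b$, elements agreeing with $T$ off a set of arbitrarily small measure, controlling the integrated metric $d_{[\mathcal R_G]}$ of Proposition \ref{prop:dRG} via the criterion of Corollary \ref{cor:equivcvmeasure}.

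The main obstacle is precisely this density step. The reason the rank drops to $2$ --- as opposed to $\lfloor \mathrm{cost}\rfloor+1$ in the discrete case of \cite{gentopergo} --- is that the ambient topology is convergence in measure, which is strictly weaker than the uniform topology: one only needs approximation \emph{in measure}, so the combinatorial part of $T$ need only be matched on most of the section rather than uniformly, and the continuous flow supplies, essentially for free, the extra degrees of freedom that in the uniform setting would have forced additional generators. Making this precise requires simultaneously matching the section combinatorics and the continuous displacement while keeping the total error small in $d_{[\mathcal R_G]}$, and verifying that the resulting ``$\langle a,b\rangle$ meets $V_n$'' conditions are genuinely dense; this gluing of discrete and continuous data is where the real work lies.
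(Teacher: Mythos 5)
First, a caveat about the comparison itself: this theorem is stated in the ``Perspectives'' section and the paper explicitly defers its proof to the forthcoming work \cite{lcfullgroups}; the only hint given is that ``the existence of discrete sections for such actions plays a crucial role.'' So there is no proof in this paper to measure you against. Your overall strategy (Baire category for the set of generating pairs, plus a cross-section carrying an invariant probability measure, which is where unimodularity enters) is consistent with that hint, and your lower bound --- $[\mathcal R_G]$ is non-abelian, hence not monothetic --- is correct and complete.

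The upper bound, however, contains a genuine gap, and one step of your plan as literally stated cannot work. You propose to ``build, as words in $a$ and $b$, elements agreeing with $T$ off a set of arbitrarily small measure.'' That is approximation in the \emph{uniform} metric $d_u$. If a two-generated subgroup were $d_u$-dense in $[\mathcal R_G]$, then $([\mathcal R_G],d_u)$ would be separable and complete (Proposition \ref{prop:dyeunif}), hence by Proposition \ref{prop:polishfordu} the full group of a countable pmp equivalence relation --- which is impossible here, since for a free action of an uncountable $G$ the subgroup $G\leq[\mathcal R_G]$ is $d_u$-discrete (see the remark after Corollary \ref{cor:noteqaut}). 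So the approximation must genuinely take place in the metric $d_{[\mathcal R_G]}$ of Proposition \ref{prop:dRG}: the word $w(a,b)$ must move most points to a point of their orbit \emph{close to} $T(x)$ in $G/G_x$, not equal to it. Producing such words from two fixed generators --- one somehow encoding the combinatorics of a cross-section relation, the other the continuous displacement --- is the entire content of the theorem, and your proposal acknowledges rather than supplies this construction. The appeal to condition $(vi)$ of Theorem \ref{thm:aperdense} does not help either: free generation of $\mathbb F_2$ by a generic pair says nothing about density of the generated subgroup. As it stands, the proposal proves the lower bound and correctly frames the upper bound, but the density argument --- the real theorem --- is missing.
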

%\begin{proof}[Sketch of proof]
%Using again Proposition \ref{prop:discsec}, we will prove that if $\Z\subset \mathbb S^1$ is a dense subgroup, then $[\Z\times \Gamma]_D\subset [\mathcal R_G]$ is dense. Once this is proved, we can use the fact that $\mathbb Z\times \Gamma$ has fixed price $1$ \cite[Prop. VI.23]{MR1728876}, hence by \cite{gentopergo}, we know that $[\Z\times \Gamma]$ has topological rank $2$ and by density $[\mathcal R_G]$ has also topological rank $2$. \end{proof}

\subsection{Orbit full groups for non-archimedean Polish groups}

Let us consider the Polish group $ \mathfrak S_\infty$ of permutations of $\N$. The Bernoulli shift $\mathfrak S_\infty\act[0,1]^\N$ is an essentially free action, and it would be interesting to understand the orbit full groups given by the restriction of this action to closed subgroups of $\mathfrak S_\infty$. These groups are also called  non-archimedean Polish groups and they are exactly the  automorphism groups of countable structures (see \cite[Thm. 1.5.1]{MR1425877}). Some of their topological properties  can  be understood in terms of the model-theoretic properties of the corresponding countable structure (see e.g. \cite{MR2140630,MR2308230}). 

As a concrete example, the group $\Aut(\Q,<)$ of order preserving bijections of the rationals is extremely amenable, as a consequence of Ramsey's theorem \cite{MR1608494}. Is the orbit full group associated to the Bernoulli shift $\Aut(\Q,<)\act[0,1]^\Q$ extremely amenable?

%%%%%---------------------------------------------------------------------------------------------------------%%%%%

\bibliographystyle{myalpha}

\end{document}